\newcommand{\myquad}[1][1]{\hspace*{#1em}\ignorespaces}
\newcounter{cprop}[section]
\newtheorem{theorem}[cprop]{Theorem}
\theoremstyle{plain}
\newtheorem{corollary}[cprop]{Corollary}
\newtheorem{lemma}[cprop]{Lemma}
\newtheorem{proposition}[cprop]{Proposition}
\newtheorem{assumption}[cprop]{Assumption}
\numberwithin{equation}{section}
\theoremstyle{definition}
\newtheorem{definition}[cprop]{Definition}
\theoremstyle{remark}
\newtheorem{remark}[cprop]{Remark}
\newcommand{\bigsubseteq}{\mathrel{\scalebox{1.2}{\(\subseteq\)}}}
\newcommand{\bigsetminus}{\mathrel{\scalebox{1.5}{\(\setminus\)}}}
\def\namedlabel#1#2{\begingroup
	\def\@currentlabel{#2}%
	\label{#1}\endgroup
}
\def\R{\mathbb{R}}
\def\C{\mathbb{C}}
\def\N{\mathbb{N}}
\def\cD{\mathcal{D}}
\def\cL{\mathcal{L}}
\def\txtd{{\textnormal{d}}}
\def\Id{{\textnormal{Id}}}
\newcommand{\vertiii}[1]{{\left\vert\kern-0.25ex\left\vert\kern-0.25ex\left\vert #1 
		\right\vert\kern-0.25ex\right\vert\kern-0.25ex\right\vert}}
\numberwithin{equation}{section}
\begin{document}
	\title[An integrable bound for semilinear rough PDEs]{An integrable bound for semilinear rough partial differential equations  with unbounded diffusion coefficients}

	\author{Alexandra Blessing Neam\c{t}u}
	\address{Alexandra Blessing Neam\c{t}u \\
		Fachbereich Mathematik und Statistik, Universität Konstanz, Konstanz, Germany}
	\email{alexandra.blessing@uni-konstanz.de}

	\author{Mazyar Ghani Varzaneh}
	\address{Mazyar Ghani Varzaneh\\
		Fachbereich Mathematik und Statistik, Universität Konstanz, Konstanz, Germany}
	\email{mazyar.ghani-varzaneh@uni-konstanz.de}
	\begin{abstract}
		This work develops moment bounds for the controlled rough path norm of the solution of semilinear rough partial differential equations.~The novel aspects are two-fold: first we consider rough paths of low time regularity $\gamma\in(1/4,1/2)$ and second treat unbounded diffusion coefficients. To this aim we introduce a suitable notion of a controlled rough path according to a monotone scale of Banach spaces and innovative control functions. 
	\end{abstract}

	\maketitle
	{\bf Keywords}: controlled rough paths, rough partial differential equations, moment bounds.  \\
	{\bf Mathematics Subject Classification (2020)}: 60G22, 60L20, 
	60L50.
	
	\section{Introduction}
	We investigate non-autonomous rough partial differential equations given by 
	\begin{align} \label{Main_Equation}
		\begin{cases}\txtd y_t = [A(t)y_t +F(t,y_t) ]~\txtd t + G(t,y_t)\circ\mathrm{d}\mathbf{X}_t\\
			y_0\in E_\alpha,
		\end{cases}
	\end{align}
	where $(E_\alpha)_{\alpha\in\R}$ is a family of Banach spaces, $\mathbf{X}$ is a rough path of regularity $\gamma\in(1/4,1/2)$ and the diffusion coefficient $G$ is linear and thus unbounded.~The precise assumptions on the coefficients $A$, $F$ and $G$ will be stated in Sections~\ref{sec:crp} and~\ref{sec:sol}.\\
	
	The main goal of this work is to obtain a-priori bounds for the moments of the control rough path norm of the mild solution of~\eqref{Main_Equation}. For similar results for rough paths of regularity $\gamma\in(1/3,1/2)$ and smooth, i.e.~three-times Fr\'echet differentiable with bounded derivatives and bounded diffusion coefficients \( G \), we refer to the recent works~\cite{GVR25, BNS24, BGS25}.~Here we go beyond the setting of these works decreasing the regularity of the driving rough path and relaxing the boundedness assumption on $G$. The techniques developed in the previous works~\cite{GVR25} and~\cite{BGS25} in order to obtain such integrable bounds are limited to bounded diffusion coefficients $G$. The treatment of the unbounded case was left open in~\cite{BGS25}. Another development in comparison to the results in~\cite{GVR25,BGS25} is given by the spatial loss of regularity of $G$. For rough paths of regularity $\gamma\in(1/3,1/2)$ it is well-known according to~\cite{GH19,GHT21} that $G$ is allowed to use spatial regularity $\sigma<\gamma$. 
	This is due to the fact that the stochastic convolution increases the spatial regularity of the corresponding rough path by a parameter $\sigma$ which is strictly less than the time regularity $\gamma$ of the rough path.~However, in order to ensure the integrability of the bound for the controlled rough path norm of the solution,~\cite{GVR25,BGS25} imposed that $\sigma\in[0,\frac{1-\gamma}{2})$. In this work we are able to overcome this limitation and treat the case $\sigma\in[0,\gamma)$ as well. \\
	
	We mention that the techniques developed in this work are also applicable to nonlinear diffusion coefficients $G$ that are further assumed to be ${N+1}$-times Fr\'echet differentiable with bounded derivatives. Here $N = \left\lfloor \frac{1}{\gamma} \right\rfloor$ represents the number of iterated integrals that have to be taken into account. For a better comprehension we only focus here on the linear case and work with the mild formulation of~\eqref{Main_Equation}. For well-posedness results of semilinear rough stochastic partial differential equations with linear diffusion coefficients based on a variational approach we refer to~\cite{Friz}. \\
	
	This manuscript is structured as follows. In Section~\ref{sec:crp} we  introduce the main concepts of controlled rough paths and control functions required in this setting. We specify that we consider \( (p, \gamma) \)-rough paths $\mathbf{X}$ for $p<\gamma$ and define suitable controlled rough paths according to $\mathbf{X}$.~Our definition is a natural generalization of the controlled rough paths according to a monotone scale of interpolation spaces introduced in~\cite{GHT21}. Here we incorporate in Definition~\ref{controlled} more iterated integrals and decrease the spatial regularity of the Gubinelli derivatives and remainders according to the time regularity $\gamma$ of the rough path $\mathbf{X}$.~The main innovative aspect in comparison to~\cite{GHT21} is the replacement of the corresponding H\"older norms of the rough input $\mathbf{X}$ and of the controlled rough path with suitable control functions.~These are necessary in order to establish the desired integrable bounds and represent a main technical ingredient of our work.~To our best knowledge, this is the first work that introduces controlled rough paths measuring their time regularity in suitable Banach spaces in terms of control functions instead of H\"older continuity.~Afterwards we construct a rough integral for such controlled rough paths against $\mathbf{X}$.~This is naturally achieved by means of a modified version of the Sewing lemma stated in Theorem~\ref{shdccsa}, keeping track of several parameters reflecting an interplay between the time regularity of $\mathbf{X}$, the loss of spatial regularity $\sigma<\gamma$, the number of iterated integrals and the parameter $p<\gamma$ arising in the definition of the $(p,\gamma)$-rough path chosen such that $\sigma<p<\gamma$ and of the control function.~As already mentioned, the main technical challenge is to incorporate the control functions in our arguments.   \\
	
	Section~\ref{sec:sol} establishes the well-posedness of~\eqref{Main_Equation} based on the properties of the rough integral introduced in Section~\ref{sec:crp}.~This can be achieved by a standard fixed-point argument in the space of controlled rough paths introduced in Section~\ref{sec:crp}. We provide the necessary steps required in order to set up this argument relying on the assumption that the diffusion coefficient $G$ is linear. As already mentioned, one could incorporate a nonlinear term $G$ which is $N+1$ Fr\'echet differentiable with bounded derivatives. In this case, our techniques combined with a standard fixed-point argument as in~\cite{GH19,GHT21} entail the local well-posedness of~\eqref{Main_Equation}. An additional boundedness assumption on the Fr\'echet derivatives up to order $N+1$ as in~\cite{HN22} will imply the global-in-time existence.~Such boundedness assumptions have been relaxed for rough differential equations in~\cite{Li}.~We refrain from providing the details of the fixed-point argument and focus instead on establishing moment bounds for~\eqref{Main_Equation}.\\
	
	Section~\ref{sec:ibound} is inspired by the seminal paper~\cite{CLL13} that establishes the existence of moments of all order for the norm of  differential equations driven by Gaussian rough paths. We extend this result in Theorem~\ref{thm:ibound} to the in the infinite-dimensional case based on Borell's inequality and a detailed analysis of the iterated integrals arising in our setting. Moreover we provide suitable estimates for the translated path in terms of the control function introduced in Section~\ref{sec:crp}.~We emphasize that Theorem~\ref{thm:ibound} is the first result on integrable bounds for the norm of the mild solution of~\eqref{Main_Equation} in the infinite dimensional setting and in the range of regularity $\gamma\in(1/4,1/2)$. As stated in~\cite{GH19}, moment bounds for the norm of the solution of a rough partial differential equation, are challenging to obtain.~This aspect was addressed in~\cite{GVR25} for $\gamma\in(1/3,1/2)$ and under the limitations on $G$ and $\sigma$ mentioned above. \\
	
	We mention that such integrable bounds  turned out to be very useful for the analysis of the long-time behavior of~\eqref{Main_Equation} driven by rough paths of regularity $\gamma\in(1/3,1/2)$ by means of the random dynamical systems approach.~These were employed in~\cite{BNS24} to study attractors for similar problems, including semilinear partial differential equations with rough boundary noise.~Recently,~\cite{BGS25} additionally established the existence of moments of all order for the controlled rough path norm of the  Jacobian of~\eqref{Main_Equation}. This turned out to be a crucial step in the application of the multiplicative ergodic theorem that ensures the existence of Lyapunov exponents for~\eqref{Main_Equation}.~In conclusion, we believe that our approach can be applied for related dynamical aspects for~\eqref{Main_Equation} as considered in~\cite{GVR25, BNS24,BGS25}.

	
	\subsection*{Acknowledgements}
	\label{sec:acknowledgements}
	Alexandra Blessing and Mazyar Ghani Varzaneh acknowledge support from DFG CRC/TRR 388 {\em Rough Analysis, Stochastic Dynamics and Related Fields}, Project A06.


		
	\section{Controlled rough paths and rough integrals}\label{sec:crp}
	\subsection{Parabolic evolution families}
	We consider a family $(E_\alpha)_{\alpha\in \R}$ of interpolation spaces, such that $E_\beta \hookrightarrow E_\alpha$ for $\alpha<\beta$ and the following interpolation inequality holds
	\begin{align}\label{interpolation:ineq}
		|x|^{\alpha_3-\alpha_1}_{E_{\alpha_2}} \lesssim |x|^{\alpha_3-\alpha_2}_{E_{\alpha_1}} |x|^{\alpha_2-\alpha_1}_{E_{\alpha_3}},
	\end{align}
	for $\alpha_1\leq \alpha_2\leq \alpha_3$, $x\in  E_{\alpha_3}$.  Furthermore, we make some standard assumptions about the linear part of the equation~\eqref{Main_Equation}, namely we assume that the non-autonomous operators $(A(t))_{t\in[0,T]}$ are sectorial and satisfy a suitable H\"older-continuity in time. These conditions are known as the Kato-Tanabe assumptions. We refer to~\cite{BGS25} for more details and further references about this topic.

	\begin{itemize}\namedlabel{ass:A}{\textbf{(A)}}
		\item[\textbf{(A1)}\namedlabel{ass:A1}{\textbf{(A1)}}] The family $(A(t))_{t\in [0,T]}$ consists of closed and densely defined operators $A(t):E_1\to E_0$ on a time independent domain $D(A)=E_1$. Furthermore, they have bounded imaginary powers, i.e. there exists $C>0$ such that
		\begin{align*}
			\sup_{|s|\leq 1} \|(-A(t))^{is}\|_{\mathcal{L}(D(A))}\leq C
		\end{align*}
		for every $t,s\in \R$, where $i$ denotes the imaginary unit. 
		\item[\textbf{(A2)}\namedlabel{ass:A2}{\textbf{(A2)}}] There exists $\vartheta\in (\pi,\frac{\pi}{2})$ and a constant $M$ such that $\Sigma_{\vartheta}:=\{z\in \C~:~|\arg(z)|<\vartheta\}\subset R(A(t))$ where $R(A(t))$ denotes the resolvent set of $A(t)$ and
		\begin{align*}
			\lVert (z-A(t))^{-1}\rVert_{\cL(E_0)}\leq \frac{M}{1+|z|},
		\end{align*}
		for all $z\in \Sigma_\vartheta$ and $t\in [0,T]$.
		\item[\textbf{(A3)}\namedlabel{ass:A3}{\textbf{(A3)}}] There exists a $\varrho\in(0,1]$ such that 
		\begin{align*}
			\lVert A(t)-A(s)\rVert_{\cL(E_1,E_0)}\lesssim |t-s|^\varrho,
		\end{align*}
		for all $s,t\in[0,T]$.
	\end{itemize}

	Under these assumptions, we obtain an evolution family which is the non-autonomous equivalent of a $C_0$-semigroup.
	\begin{theorem}
		Let $(A(t))_{t\in [0,T]}$ satisfy the Assumptions~\ref{ass:A1}--\ref{ass:A3}. Then there exists a unique parabolic evolution family $(U_{s,t})_{0\leq s\leq t\leq T}$ of linear operators $U_{t,s}:E_0\to E_0$ such that the following properties holds:

		\begin{itemize}
			\item[i)] For all $0\leq r\leq s\leq t\leq T$ we have
			\begin{align*}
				U_{t,s}U_{s,r}=U_{t,r}
			\end{align*}
			as well as $U_{t,t}=\Id_{E_0}$.
			\item[ii)] The mapping $(s,t)\mapsto U_{t,s}$ is strongly continuous. 
			\item[iii)] For $s\leq t$ we have the identity
			\begin{align*}
				\frac{\txtd}{\txtd t} U_{t,s}=A(t)U_{t,s}.
			\end{align*}
		\end{itemize}
	\end{theorem}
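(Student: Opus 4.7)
The plan is to invoke the classical Tanabe/Acquistapace--Terreni construction of parabolic evolution families. Under \ref{ass:A2}, for each fixed $s\in[0,T]$ the operator $A(s)$ generates an analytic semigroup $(e^{\tau A(s)})_{\tau\geq 0}$ on $E_0$, obtained as the usual Dunford integral along the boundary of $\Sigma_\vartheta$, together with the parabolic estimate $\lVert A(s)e^{\tau A(s)}\rVert_{\cL(E_0)}\lesssim \tau^{-1}$. Assumption \ref{ass:A1} ensures that fractional powers and interpolation spaces are well behaved, in particular that $E_\alpha$ coincides with the complex interpolation space $[E_0,E_1]_\alpha$, so that all standard smoothing estimates $\lVert(-A(s))^\alpha e^{\tau A(s)}\rVert_{\cL(E_0)}\lesssim \tau^{-\alpha}$ are at our disposal uniformly in $s$.

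Next I would set up the perturbative ansatz $U_{t,s} = e^{(t-s)A(s)} + Z(t,s)$ and require the differentiation identity (iii). Reorganising, this leads to the Volterra-type equation
\begin{align*}
Z(t,s) = \int_s^t e^{(t-r)A(r)}\,\Phi(r,s)\,\txtd r,\qquad \Phi(r,s):=[A(r)-A(s)]\,e^{(r-s)A(s)} + \Phi * K(r,s),
\end{align*}
where the kernel $K$ encodes the mismatch between the frozen-coefficient semigroups. The crucial input is \ref{ass:A3}, which together with the analyticity from \ref{ass:A2} gives
\begin{align*}
\lVert [A(t)-A(s)]\,e^{(t-s)A(s)}\rVert_{\cL(E_0)}\lesssim (t-s)^{\varrho-1},
\end{align*}
an integrable singularity. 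This allows the Volterra series defining $\Phi$ to converge absolutely in $\cL(E_0)$ on $\{0\leq s\leq t\leq T\}$, and then $Z$ is recovered by one further integration. The composition property (i) follows from uniqueness in this Volterra problem: for fixed $r$, both $t\mapsto U_{t,s}U_{s,r}$ and $t\mapsto U_{t,r}$ satisfy the same parabolic equation with identical value at $t=s$, so they must agree. Strong continuity (ii) and the generator identity (iii) are then direct consequences of the representation via the convergent series, using the standard trick that $A(t)e^{(t-r)A(r)} = [A(t)-A(r)]e^{(t-r)A(r)} + A(r)e^{(t-r)A(r)}$ to transfer the unbounded operator onto the frozen-coefficient semigroup.

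The main obstacle is purely technical: one has to verify the estimates on the iterated kernels $K^{*n}$ with uniform constants in $s,t$, and bootstrap from $\cL(E_0)$-bounds to bounds in the interpolation scale $(E_\alpha)_{\alpha\in\R}$ using \eqref{interpolation:ineq} and the bounded-imaginary-powers hypothesis \ref{ass:A1}. Since these computations are entirely standard and the required estimates are worked out in detail in~\cite{BGS25} and the references therein, we omit the tedious but routine bookkeeping and refer the reader to that work for the precise arguments.
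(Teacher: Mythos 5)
The paper offers no proof of this theorem at all: it is stated as a classical result of Kato--Tanabe theory, with a pointer to~\cite{BGS25} and the references therein, so there is no in-paper argument to compare against. Your sketch is the standard Tanabe parametrix construction from that literature, and it is correct in outline: freeze the coefficients, use \ref{ass:A2} to generate the analytic semigroups $e^{\tau A(s)}$ with the parabolic smoothing estimates, write $U_{t,s}=e^{(t-s)A(s)}+\int_s^t e^{(t-r)A(r)}\Phi(r,s)\,\txtd r$, and solve the Volterra equation for $\Phi$; the decisive input is exactly the one you isolate, namely $\lVert [A(t)-A(s)]e^{(t-s)A(s)}\rVert_{\cL(E_0)}\lesssim (t-s)^{\varrho-1}$ from \ref{ass:A3} combined with $\lVert e^{\tau A(s)}\rVert_{\cL(E_0,E_1)}\lesssim \tau^{-1}$, which makes the iterated kernels summable. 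Two small remarks. First, your displayed Volterra relation is written a bit loosely (it should be of the form $\Phi=\Phi_1+\Phi_1*\Phi$ with $\Phi_1(r,s)=[A(r)-A(s)]e^{(r-s)A(s)}$, rather than folding the convolution into the definition of $\Phi$ as stated), but since you ultimately defer the kernel bookkeeping to the standard references, this does not amount to a gap. Second, the bounded imaginary powers hypothesis \ref{ass:A1} is not needed for the existence, uniqueness, or properties i)--iii) of $U_{t,s}$, which rely only on \ref{ass:A2}--\ref{ass:A3}; in the paper it serves to identify $E_\alpha=D((-A(t))^\alpha)$ with a time-independent interpolation scale, so invoking it for the construction itself slightly overstates its role. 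With these caveats, your approach is the same classical construction the paper implicitly relies on, and the deferral to the literature mirrors how the paper itself treats the statement.
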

	From now on, we denote $E_\alpha:=D((-A(t))^\alpha)$ endowed with the norm $|\cdot|_{E_\alpha}:=\|(-A(t))^\alpha \cdot\|_{E_0}$ and assume $(A(t))_{t\in [0,T]}$ satisfies Assumption \ref{ass:A1}--\ref{ass:A3}  on $(E_\alpha,E_{\alpha+1})$ for every $\alpha>0$. Then the resulting evolution family satisfies for $t>s$ similar estimates as in the autonomous case, i.e. there exists constants $C_1, C_2>0$ such that 
	\begin{align}\label{regularity}
		\begin{split}
			|(U_{t,s}-\text{I}) x|_{E_\alpha}& \leq C_1 |t-s|^{\sigma_1} |x|_{E_{\alpha+\sigma_1}},\\
			|U_{t,s}x|_{E_{\alpha+\sigma_2}}& \leq C_2 |t-s|^{-\sigma_2}|x|_{E_\alpha},
		\end{split}
	\end{align}
	for $\sigma_2\in [k_-,k_+]$ and $\sigma_1\in [0,1]$, where $k_-<k_+$ are fixed natural numbers and the constant $C_1=C_1(\alpha,\sigma_1), C_2=C_2(\alpha,\sigma_2)$ may also depend on $k_-,k_+$. We mention that Assumption~\ref{ass:A1} is necessary in order to ensure that the spaces $E_\alpha$ are time-independent, see~\cite[Remark 2.5 i)]{BGS25}.
	
	\subsection{Notations}\label{NOTa}
	In this subsection we collect some notations and conventions which will be used throughout the manuscript. 
	\begin{enumerate}
		\item The symbol $\circ$ is commonly used to denote the composition. The notation $a \lesssim b$ signifies that there exists a constant $C$, depending only on parameters of no particular significance, such that $a \leq Cb$.
		\item We denote the norm of an arbitrary Banach space \( E \) by \( \vert \cdot \vert_E \). {If \( E \) is finite dimensional, we write \( \vert \cdot \vert \).}
		\item For three Banach spaces \( U \), \( V \) and \( W \), {where \( U \) and \( V \) are finite-dimensional}, we identify the space of linear operators from \( U \otimes V \) to \( W \),  denoted by \( L(U \otimes V, W) \), with \( L(U, L(V, W)) \).
		\item For arbitrary \( N \geq 1 \) and \( d\geq 1\) we denote by \( T^{N}(\mathbb{R}^d) \)  the truncated tensor algebra of level \( N \), i.e.  
		\[
		T^{N}(\mathbb{R}^d) = \bigoplus_{k=0}^{N} (\mathbb{R}^d)^{\otimes k},
		\]
		where \( (\mathbb{R}^d)^{\otimes 0} = \mathbb{R} \) and \( (\mathbb{R}^d)^{\otimes 1} = \mathbb{R}^d \).  
		Furthermore, for \( 0 \leq k \leq N \), the map  
		\begin{align*}
			\Pi^{k} \colon T^{N}(\mathbb{R}^d) \to (\mathbb{R}^d)^{\otimes k}
		\end{align*}
		refers to the usual projection map.
		
		\item  By \( P[s,t] \) we mean the set of finite partitions of an arbitrary interval \([s,t]\) and
		\begin{align}
			\sup_{\pi \in P[s,t]} \left\{ \sum_k B_{\tau_k,\tau_{k+1}} \right\},
		\end{align}
		denotes 
		\begin{align}
			\sup_{\pi = \{s = \tau_0 < \tau_1 < \cdots < \tau_m = t\}} \left\{ \sum_{0\leq k<m} B_{\tau_k,\tau_{k+1}} \right\},
		\end{align}
		where \((B_{\tau_k,\tau_{k+1}})_{0 \leq k < m}\) is a sequence of positive values, which will be explicitly defined in the context. These values depend on \(\tau_k\) and \(\tau_{k+1}\), i.e.~two consecutive points of the partition \(\pi\in P[s,t]\).
		\item Recalling that $\gamma$ denotes the regularity of $\mathbf{X}$ and assuming that $\frac{1}{\gamma}\notin \N$, we set $N := \left\lfloor \frac{1}{\gamma} \right\rfloor$ and consider $1\leq j \leq N$. For a path \( \xi^{j} : [0,T] \rightarrow L\left((\mathbb{R}^d)^{\otimes j}, E_{\alpha-j\gamma-\sigma}\right) \) we write when it is clear from the context
		\begin{align*}
			\sup_{\tau \in [s,t]} \left| \xi^{j}_{\tau} \right|_{\alpha-j\gamma-\sigma} := \sup_{\tau \in [s,t]} \left| \xi^{j}_{\tau} \right|_{L\left((\mathbb{R}^d)^{\otimes j}, E_{\alpha-j\gamma-\sigma}\right)}.
		\end{align*}
	\end{enumerate} 
	The following basic lemma will frequently be used frequently throughout this manuscript.
	
		\begin{lemma} \label{INM}
			Assume that \( p_1 > p_2 > 0 \). Then, for every (finite or infinite) sequence of real values \( (x_k)_{k \geq 0} \), the following inequality holds true due to the embedding of the spaces of sequences $l^{p_1}\hookrightarrow l^{p_2}$ for $p_1>p_2$ 
			\begin{align}\label{85sd}
				\left(\sum_{k}\vert x_{k}\vert^{p_1}\right)^{\frac{1}{p_1}}\leq \left(\sum_{k}\vert x_{k}\vert^{p_2}\right)^{\frac{1}{p_2}}.
			\end{align}
			Moreover, for every \( M \in \mathbb{N} \), we have the inequality
			\begin{align}\label{785a}
				\left( \sum_{0 \leq k \leq M} |x_{k}| \right)^{p_1} \leq C(M,p_1) \sum_{0 \leq k \leq M} |x_{k}|^{p_1},
			\end{align}
			for a constant \( C(M,p_1) \) which depends only on \( M \) and \( p_1 \).
		\end{lemma}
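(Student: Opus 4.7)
The two inequalities are classical embeddings/power-mean bounds, so the plan is to keep the write-up short and organize it by case.

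For the first inequality, my plan is to use a homogeneity argument. If the right-hand side is $0$ or $+\infty$ the claim is trivial, so I may assume $0<S:=\left(\sum_k|x_k|^{p_2}\right)^{1/p_2}<\infty$. Dividing by $S$ reduces the statement to showing that $\sum_k|y_k|^{p_2}=1$ implies $\sum_k|y_k|^{p_1}\leq 1$. But $\sum_k|y_k|^{p_2}=1$ forces $|y_k|\leq 1$ for every $k$, and since $p_1>p_2$ this gives $|y_k|^{p_1}\leq |y_k|^{p_2}$ pointwise. Summing and taking the $p_1$-th root yields \eqref{85sd}.

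For the second inequality, the plan is to split on whether $p_1\geq 1$ or $p_1<1$. When $p_1\geq 1$, I will apply Hölder's inequality with the conjugate exponent $q=p_1/(p_1-1)$ to the pair $(|x_k|,1)$ over $\{0,\dots,M\}$, which gives
\[
\sum_{0\leq k\leq M}|x_k|\;\leq\;(M+1)^{1-1/p_1}\left(\sum_{0\leq k\leq M}|x_k|^{p_1}\right)^{1/p_1};
\]
raising to the power $p_1$ then yields \eqref{785a} with $C(M,p_1)=(M+1)^{p_1-1}$. When $0<p_1<1$, the map $t\mapsto t^{p_1}$ is subadditive on $[0,\infty)$, so by induction $\bigl(\sum_{k=0}^{M}|x_k|\bigr)^{p_1}\leq \sum_{k=0}^{M}|x_k|^{p_1}$, and $C(M,p_1)=1$ suffices. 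In both regimes the constant depends only on $M$ and $p_1$, as claimed.

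There is no real obstacle here: the only mild subtlety is remembering that the direction of the power-mean inequality flips across $p_1=1$, which is why the proof splits into two cases. Both are standard one-line arguments, so I would present the two cases in succession and conclude. The lemma is then ready for repeated use in the Sewing-type estimates of the next section, where sums over partitions get raised to powers determined by the rough-path regularity.
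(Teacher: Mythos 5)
Your proof is correct: the normalization argument for \eqref{85sd} is exactly the standard proof of the embedding $l^{p_2}\hookrightarrow l^{p_1}$ (in the sense that the $l^{p_1}$-norm is dominated by the $l^{p_2}$-norm) that the paper invokes without further detail, and the case split at $p_1=1$ (H\"older with exponent $p_1/(p_1-1)$ giving $C(M,p_1)=(M+1)^{p_1-1}$, respectively subadditivity of $t\mapsto t^{p_1}$ giving $C=1$) is the standard route to \eqref{785a}. Since the paper states the lemma as a known fact and provides no proof of its own, your write-up simply supplies the classical argument behind it, and it is complete, including the boundary case $p_1=1$ where the constant $(M+1)^{p_1-1}=1$ is trivially valid.
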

	
	\subsection{Rough path theory}
	In this subsection, we provide an overview of rough path theory. For technical reasons, our presentation slightly differs from the literature. 
	In order to fix the ideas we first recall the setting of~\cite{GHT21} of controlled rough paths tailored to parabolic rough PDEs. For $d\geq 1$ we consider a $d$-dimensional $\gamma$-H\"older rough path $\textbf{X}=(X,\mathbb{X})$, for $\gamma\in(1/3,1/2)$ with $X_0=0$. 
	More precisely, we have for $T>0$
	\begin{align*}
		X\in C^{\gamma}([0,T];\mathbb{R}^d) ~~\mbox{ and } ~~ \mathbb{X}\in C_2^{2\gamma}(\Delta_{[0,T]};\mathbb{R}^d\otimes\mathbb{R}^d),
	\end{align*}
	where $\Delta_{J}\coloneqq\{(s,t)\in J\times J~:~s\leq t\}$ for $J\subset \R$ and the connection between $X$ and $\mathbb{X}$ is given by Chen's relation
	\begin{align*}
		\mathbb{X}_{s,t}- \mathbb{X}_{s,u}-\mathbb{X}_{u,t}=(\delta X)_{s,u}\otimes (\delta X)_{u,t},
	\end{align*}
	for $s\leq u\leq t$, where we write $(\delta X)_{s,u}:=X_u-X_s$ for an arbitrary path. Here, we denote by $C^\gamma([0,T])$ the space of $\gamma$-H\"older continuous paths, by $C^{2\gamma}_2(\Delta_T)$ the space of $2\gamma$-Hölder continuous two-parameter functions and by $\mathscr{C}^\gamma([0,T])$ the space of $\gamma$-H\"older rough paths $\mathbf{X}=(X,\mathbb{X})$. In this setting, one can define the notion of a controlled rough path according to such a family of function spaces, as introduced in \cite{GHT21}. We omit the time dependence if it is clear from the context, meaning that we write $C^\gamma(E_\alpha)=C^\gamma([0,T];E_\alpha)$.
	\begin{definition}\label{def:crp}
		Let $\alpha\in\R$. We call a pair $(y,y')$ a controlled rough path if $(y,y')\in C(E_\alpha) \times (C(E_{\alpha-\gamma} ) \cap C^{\gamma}(E_{\alpha-2\gamma}))^d$ and the remainder  
		\begin{align}\label{remainder}
			(s,t)\in \Delta_{[0,T]}\mapsto R^y_{s,t}:= (\delta y)_{s,t} -y'_s \circ (\delta X)_{s,t}
		\end{align}
		belongs to $ C^{\gamma}(E_{\alpha-\gamma})\cap C^{2\gamma}(E_{\alpha-2\gamma})$, where $y'_s \circ (\delta X)_{s,t}=\sum_{i=1}^d y_s^{i,\prime}(\delta X^{i})_{s,t}$. The component $y'$ is referred to as Gubinelli derivative of $y$. 
		The space of controlled rough paths is denoted by $\mathscr{D}^{\gamma}_{\mathbf{X},\alpha}([0,T])$ and endowed with the norm $\|\cdot,\cdot\|_{\mathscr{D}^{\gamma}_{\mathbf{X},\alpha}([0,T])}$ given by
		\begin{align}\label{g:norm}
			\begin{split}
				\|y,y'\|_{\mathscr{D}^{\gamma}_{\mathbf{X},\alpha}([0,T])}:= \left\|y \right\|_{\infty,E_\alpha} 
				+ \|y' \|_{\infty,E^d_{\alpha-\gamma}}
				+ \left[y'\right]_{\gamma,E^d_{\alpha-2\gamma}}
				+\left[R^y \right]_{\gamma,E_{\alpha-\gamma}}    + \left[R^y \right]_{2\gamma,E_{\alpha-2\gamma}}.
			\end{split}
		\end{align}
	\end{definition}
	Here we use for $y^\prime=(y^{i,\prime})_{1\leq i\leq d}$ the notation $|y'_s|_{E_\alpha^d} \coloneqq \sup\limits_{1\leq i\leq d}|y_s^{i,\prime}|_{\alpha}$. \\
	
	In our case we need the following generalizations. First, in order to obtain integrable bounds we have to replace the H\"older norms of the random input by suitable control functions. Therefore we leave the framework of $\gamma$-H\"older rough paths and study $(p,\gamma)$-rough paths as given in Definition~\ref{pgamma:rp}. 
	Second, since we allow paths of regularity $\gamma\in(1/4,1/3)$ and deal with higher order iterated integrals, we have to decrease the spatial regularity of the controlled rough path in $E_\alpha$ accordingly. This is specified in Definition~\ref{controlled} for paths taking values in $E_{\alpha-\sigma}$ for $0\leq \sigma<\gamma$ and in Definition~\ref{controlled2} for paths taking values in $E_\alpha$. 
	\begin{definition}\label{pgamma:rp}
		We assume that $d\geq 1$, \( 0 \leq p < \gamma < \frac{1}{2} \) and set \( N = \left\lfloor \frac{1}{\gamma} \right\rfloor \). Since \( \frac{1}{\gamma} \) is not an integer we have that \( N\gamma < 1 \). We further let \( \mathbf{X}: \Delta_{[0,T]} \longrightarrow T^{N}(\mathbb{R}^d) \) be a multiplicative functional such that for every \( 1 \leq j \leq N \) we have 
		\begin{align}\label{AASdfd}
			\begin{split}
				&	W_{\Pi^{j}(\mathbf{X}),\gamma,p} \colon \Delta_{[0,T]} \rightarrow \mathbb{R}, \\
				&	W_{\Pi^{j}(\mathbf{X}),\gamma,p}(s,t) \coloneqq 	\sup_{\pi \in P[s,t]}  \left\{ \sum_{k}\frac{\left\vert(\Pi^{j}(\mathbf{X}))_{\tau_{k},\tau_{k+1}}\right\vert^{\frac{1}{j(\gamma-p)}}}{(\tau_{k+1}-\tau_k)^{\frac{p}{\gamma-p}}} \right\}< \infty.
			\end{split}
		\end{align}
		In this case, we say that \( \mathbf{X} \) is a \( (p, \gamma) \)-{rough path}. We also set \( W_{\mathbf{X}, \gamma, p}(s, t) := \sum_{1 \leq j \leq N} W_{\Pi^{j}(\mathbf{X}), \gamma, p}(s, t) \). From the definition, we can easily verify that $W$ is a control function, i.e., for \( s < u < t \), we have the subadditivity property
		\[
		W_{\mathbf{X}, \gamma, p}(s, u) + W_{\mathbf{X}, \gamma, p}(u, t) \leq W_{\mathbf{X}, \gamma, p}(s, t).
		\]
		We say that our \( (p, \gamma) \)-rough path is continuous if \( W_{\mathbf{X}, \gamma, p} \), is a continuous function. We also use \( \mathscr{C}^{\gamma, p}([0,T]) \) to denote the space of \( (p, \gamma) \)-rough paths. We also refer to \( \mathbf{X} \) as a weakly geometric \( (p, \gamma) \)-rough path if in addition \( \mathbf{X} \) takes values in the free nilpotent group of step \( N \) over \( \mathbb{R}^d \). For a similar definition we refer to \cite[Definition 9.15]{FV10}.
	\end{definition}
	We can prove the following simple lemma. 
	\begin{lemma}\label{sudj14}
		Assume that \( \mathbf{X} \) is a \( (p, \gamma) \)-rough path. Then \( \mathbf{X} \) is also a \( (0, \gamma) \)-rough path. Moreover, the following inequality for the corresponding control functions holds
		\begin{align*}
			W_{\mathbf{X}, \gamma, 0}^{\gamma}(s,t) \leq (t-s)^{p}W_{\mathbf{X}, \gamma, p}^{\gamma-p}(s,t).
		\end{align*}
	\end{lemma}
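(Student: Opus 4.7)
The plan is to reduce the claim to a per-level estimate on each $\Pi^{j}(\mathbf{X})$, derive that estimate by a single application of H\"older's inequality, and then aggregate the $N$ level-wise inequalities. Fix $1\le j\le N$ and an arbitrary partition $\pi=\{s=\tau_0<\cdots<\tau_m=t\}$ of $[s,t]$. The starting observation is the algebraic identity
\[
\bigl|(\Pi^{j}(\mathbf{X}))_{\tau_k,\tau_{k+1}}\bigr|^{\frac{1}{j\gamma}} = \left(\frac{\bigl|(\Pi^{j}(\mathbf{X}))_{\tau_k,\tau_{k+1}}\bigr|^{\frac{1}{j(\gamma-p)}}}{(\tau_{k+1}-\tau_k)^{\frac{p}{\gamma-p}}}\right)^{\!\frac{\gamma-p}{\gamma}}(\tau_{k+1}-\tau_k)^{\frac{p}{\gamma}},
\]
which is verified by simply collecting the exponents of the increment and of the time gap. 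The bracketed factor is precisely the summand defining $W_{\Pi^{j}(\mathbf{X}),\gamma,p}$, while the trailing factor is tailored for telescoping since $\sum_k (\tau_{k+1}-\tau_k)=t-s$.

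Summing over $k$ and applying H\"older's inequality with conjugate exponents $\gamma/(\gamma-p)$ and $\gamma/p$ (whose reciprocals sum to $1$) then yields
\[
\sum_k \bigl|(\Pi^{j}(\mathbf{X}))_{\tau_k,\tau_{k+1}}\bigr|^{\frac{1}{j\gamma}} \le W_{\Pi^{j}(\mathbf{X}),\gamma,p}(s,t)^{\frac{\gamma-p}{\gamma}}\,(t-s)^{\frac{p}{\gamma}}.
\]
Passing to the supremum over $\pi\in P[s,t]$ and raising both sides to the power $\gamma$ produces the per-level bound
\[
W_{\Pi^{j}(\mathbf{X}),\gamma,0}(s,t)^{\gamma}\le (t-s)^{p}\,W_{\Pi^{j}(\mathbf{X}),\gamma,p}(s,t)^{\gamma-p},
\]
whose right-hand side is finite by the $(p,\gamma)$-rough path assumption. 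In particular each $W_{\Pi^{j}(\mathbf{X}),\gamma,0}(s,t)$ is finite, so by definition $\mathbf{X}$ is a $(0,\gamma)$-rough path.

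The stated inequality for the aggregated controls $W_{\mathbf{X},\gamma,\cdot}=\sum_{j=1}^{N}W_{\Pi^{j}(\mathbf{X}),\gamma,\cdot}$ then follows by summing the $j$-level bounds, using the subadditivity $(\sum_j x_j)^{\gamma}\le \sum_j x_j^{\gamma}$ (valid since $\gamma<1$) on the left and the monotonicity $W_{\Pi^{j}(\mathbf{X}),\gamma,p}\le W_{\mathbf{X},\gamma,p}$ on the right. The only mildly delicate point is the handling of the exponent $\gamma-p<1$ when combining $N$ non-negative terms, which is controlled by the elementary Lemma~\ref{INM} for finite sequences; this is the step I expect to require the most care, but it is purely combinatorial and involves only a constant depending on $N$.
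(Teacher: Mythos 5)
Your argument is essentially the paper's: the same rewriting of each summand as a product of the $(p,\gamma)$-summand and a power of the time increment, followed by H\"older's inequality with exponents $\tfrac{\gamma}{\gamma-p}$ and $\tfrac{\gamma}{p}$, gives the level-wise bound $W_{\Pi^{j}(\mathbf{X}),\gamma,0}^{\gamma}(s,t)\le (t-s)^{p}\,W_{\Pi^{j}(\mathbf{X}),\gamma,p}^{\gamma-p}(s,t)$, which is exactly where the paper's proof stops (``the rest is straightforward''). The only divergence is the aggregation over $1\le j\le N$, which the paper suppresses and you make explicit via $(\sum_j x_j)^{\gamma}\le\sum_j x_j^{\gamma}$ together with $W_{\Pi^{j}(\mathbf{X}),\gamma,p}\le W_{\mathbf{X},\gamma,p}$; be aware that this yields $W_{\mathbf{X},\gamma,0}^{\gamma}(s,t)\le N\,(t-s)^{p}\,W_{\mathbf{X},\gamma,p}^{\gamma-p}(s,t)$, i.e.\ the stated inequality only up to a constant depending on $N$, and no summation of the level-wise bounds can remove it, since $\sum_j a_j^{(\gamma-p)/\gamma}\ge\bigl(\sum_j a_j\bigr)^{(\gamma-p)/\gamma}$ points in the wrong direction. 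This is less a flaw of your proof than an imprecision in the lemma's constant-free formulation: a factor $C(N)$ is what this route (and the paper's) actually delivers, and it is harmless since every later invocation of the lemma occurs inside a $\lesssim$ estimate.
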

	\begin{proof}
		We recall \eqref{AASdfd} and assume that $\pi=\lbrace \tau_{k}\rbrace_{0\leq k \leq m}\in P[s,t]$. Then for \(1 \leq j \leq N\), it follows using Hölder's inequality that
		\begin{align}
			\begin{split}
				&\sum_{k}\left\vert(\Pi^{j}(\mathbf{X}))_{\tau_{k},\tau_{k+1}}\right\vert^{\frac{1}{j\gamma}}=\sum_{k}(\tau_{k+1}-\tau_k)^{\frac{p}{\gamma}}\frac{\left\vert(\Pi^{j}(\mathbf{X}))_{\tau_{k},\tau_{k+1}}\right\vert^{\frac{1}{j\gamma}}}{(\tau_{k+1}-\tau_k)^{\frac{p}{\gamma}}}\\&\quad \leq\left(\sum_{k}\left( \frac{\left\vert(\Pi^{j}(\mathbf{X}))_{\tau_{k},\tau_{k+1}}\right\vert^{\frac{1}{j\gamma}}}{(\tau_{k+1}-\tau_k)^{\frac{p}{\gamma}}}\right)^{\frac{\gamma}{\gamma-p}}\right)^{\frac{\gamma-p}{\gamma}}\left(\sum_{k}(\tau_{k+1}-\tau_k)\right)^{\frac{p}{\gamma}}\leq (t-s)^{\frac{p}{\gamma}} W_{\Pi^{j}(\mathbf{X}), \gamma, p}^{\frac{\gamma-p}{\gamma}}(s,t).
			\end{split}
		\end{align}
		From this point onward, the rest is straightforward.
	\end{proof}
	Now, given a \( (p, \gamma) \)-rough path $\mathbf{X}$, we naturally expect to define the spaces of controlled paths with respect to $\mathbf{X}$ taking values in $E_{\alpha-\sigma}$ for $0\leq \sigma<\gamma$ in the spirit of Definition~\ref{def:crp}. The next definition is a natural generalization of Definition~\ref{def:crp} where $N=2$ and $X\in \mathscr{C}^\gamma([0,T])$. Moreover, the H\"older regularities of the Gubinelli derivative and of the remainder specified in Definition~\ref{def:crp} are now quantified in terms of suitable control functions. We emphasize that the spatial regularity of the Gubinelli derivatives and remainders is successively decreased according to the time regularity $\gamma$ of the rough path $\mathbf{X}$.

    \begin{definition}\label{controlled}
		Let \( \mathbf{X} \in \mathscr{C}^{\gamma, p}([0,T]) \) and \( 0 \leq \sigma < \gamma \). We say that \( \boldsymbol{\xi} = (\xi^{i})_{0 \leq i < N} \) is controlled by \( \mathbf{X} \) in $\left(L\left((\R^d)^{\otimes j+1},E_{\alpha-j\gamma-\sigma}\right)\right)_{0\leq j<N}$ if we have for every \( 0 \leq i\leq j < N \) that 

		\begin{align*}
			&\xi^j: [0,T] \to L\left((\R^d)^{\otimes j+1},E_{\alpha-j\gamma-\sigma}\right)=L\left((\R^d)^{\otimes j-i},L\left((\R^d)^{\otimes i+1}, E_{\alpha-j\gamma-\sigma})\right)\right),
		\end{align*}
		where we use the identification specified in Notation~\ref{NOTa} (3). 
		For every \( 0\leq i, l \leq N \) with \( l - i > 1 \) 
		\begin{align}\label{xii}
			\delta \xi^{i}_{s,t}=\sum_{i<j<l}\xi^{j}_s\circ (\Pi^{j-i}(\mathbf{X}))_{s,t}+R^{i,l}_{s,t}
		\end{align}
		such that the following properties hold: 
		\begin{enumerate}
			\item $\sup_{\tau\in [0,T]}\left\vert\xi^{i}_{\tau}\right\vert_{L\left((\R^d)^{\otimes i+1},E_{\alpha-i\gamma-\sigma}\right)}<\infty$ for every $0\leq i<N$.
			\item For every \( 0\leq i, l \leq N \) with \( l - i > 1 \) we assume for every \( [s,t] \subseteq [0,T] \) that
			\begin{align*}
				&W_{R^{i,l},\alpha-\sigma,\gamma,p,1}(s,t):=	\sup_{\pi \in P[s,t]} \left\{\sum_{k} \frac{\left|R^{i,l}_{\tau_{k},\tau_{k+1}}\right|_{L\left((\R^d)^{\otimes i+1},E_{\alpha-(l-1)\gamma-\sigma}\right)}^{\frac{1}{(l-i-1)(\gamma-p)}}}{(\tau_{k+1}-\tau_k)^{\frac{p}{\gamma-p}}} \right\}<\infty,
				\\&W_{R^{i,l},\alpha-\sigma,\gamma,p,2}(s,t):=	\sup_{\pi \in P[s,t]} \left\{\sum_{k} \frac{\left|R^{i,l}_{\tau_{k},\tau_{k+1}}\right|_{L\left((\R^d)^{\otimes i+1},E_{\alpha-l\gamma-\sigma}\right)}^{\frac{1}{(l-i)(\gamma-p)}}}{(\tau_{k+1}-\tau_k)^{\frac{p}{\gamma-p}}} \right\}<\infty.
			\end{align*}
			This means that the previous terms are assumed to be control functions reflecting the regularity of the remainders of the controlled rough path, i.e.~generalizing  the last two terms in~\eqref{g:norm} in our setting.
            Moreover, in the notation above we emphasize the dependence of the two control functions $W_{\cdot,1}$ and $W_{\cdot,2}$ of the corresponding remainders $R^{i,l}$ on the spatial regularity of the controlled rough path $\alpha-\sigma$, time regularity $\gamma$ of $\mathbf{X}$ and the parameter $p<\gamma$. 
			\item  For every \( 0 \leq i < N \) and every \( [s,t] \subseteq [0,T] \) we define \( R^{i,i+1}_{s,t} := \delta \xi^i_{s,t} \) and assume that
			\begin{align*}
				W_{R^{i,i+1},\alpha-\sigma,\gamma,p}(s,t):=	\sup_{\pi \in P[s,t]} \left\{\sum_{k} \frac{\left|  \delta \xi^i_{\tau_{k},\tau_{k+1}}\right|_{L\left((\R^d)^{\otimes i+1},E_{\alpha-(i+1)\gamma-\sigma}\right)}^{\frac{1}{\gamma-p}}}{(\tau_{k+1}-\tau_k)^{\frac{p}{\gamma-p}}} \right\}<\infty.
			\end{align*}
			For technical reasons, we adopt the convention that for  \( i = N-1 \) 
			\begin{align}\label{NNN}
				W_{R^{N-1,N},\alpha-\sigma,\gamma,p,2}(s,t):= W_{R^{N-1,N},\alpha-\sigma,\gamma,p}(s,t),
			\end{align}
			which is a control functions based on our previous assumptions.
			
		\end{enumerate}
		We denote by \( \mathscr{D}^{\gamma, p}_{\mathbf{X},\alpha-\sigma}\left([0,T]\right) \) the space of all such elements equipped with the following norm
		\begin{align}\label{YHN123}
			\begin{split}
				&\Vert\boldsymbol{\xi}\Vert_{ \mathscr{D}^{\gamma, p}_{\mathbf{X},\alpha-\sigma}\left([0,T]\right)}:=\sum W_{R^{i,i+1},\alpha-\sigma,\gamma,p}^{\gamma-p}(0,T)\\&+\quad\sum\left(\sup_{\tau\in [0,T]}\left\vert\xi^{i}_{\tau}\right\vert_{L\left((\R^d)^{\otimes i+1},E_{\alpha-i\gamma-\sigma}\right)}+W_{R^{i,l},\alpha-\sigma,\gamma,p,1}^{(l-i-1)(\gamma-p)}(0,T)+W_{R^{i,l},\alpha-\sigma,\gamma,p,2}^{(l-i)(\gamma-p)}(0,T)\right),
			\end{split}
		\end{align}
		where the summations are taken over all indices $i$ and $l$ in the range specified above, i.e. according to the restrictions on \( i \) and \( l \) that we assumed in (1)--(3). 
	\end{definition}
	\begin{remark}
		\begin{enumerate}
			\item 
			A controlled rough path  \( \boldsymbol{\xi} \in \mathscr{D}^{\gamma, p}_{\mathbf{X},\alpha-\sigma}\left([0,T]\right) \) according to $\mathbf{X}$ consists of a sequence \( (\xi^i)_{0 \leq i < N} \), where the component \( \xi^i \) is commonly referred to as the \( i \)-th Gubinelli derivative. Note that~\eqref{xii} is the generalization of~\eqref{remainder} in our case.
			In comparison to Definition~\ref{def:crp} we assume here that the path component $\xi^0$ takes values in $E_{\alpha-\sigma}$ for $0\leq \sigma<\gamma$. We consider in Definition~\ref{controlled2} a slightly modified space of controlled rough paths \( \mathscr{\tilde{D}}^{\gamma, p}_{\mathbf{X},\alpha}\left([0,T]\right) \) where the corresponding path component $\tilde{\xi}^0$ takes values in $E_\alpha$ and construct the solution of~\eqref{Main_Equation} as an element of this space.
			\item The reason why deal with two different spatial regularities $\alpha-\sigma$ and $\alpha$ is that the stochastic convolution improves the spatial regularity of the controlled rough paths by a parameter $\sigma$ which is strictly less than the H\"older regularity of the random input $\gamma$. This is known for $\gamma\in(1/3,1/2)$ from~\cite[Corollary 4.6]{GHT21} and can be recovered in our setting in Proposition~\ref{UJMMA}.  
			\item The definition of the control functions together with the Minkowski inequality yield that  \eqref{YHN123} defines a norm on the space \( \mathscr{D}^{\gamma, p}_{\mathbf{X},\alpha-\sigma}\left([0,T]\right) \). In conclusion \( \mathscr{D}^{\gamma, p}_{\mathbf{X},\alpha-\sigma}\left([0,T]\right) \) endowed with the norm~\eqref{YHN123} is a Banach space.
			\item We decided to emphasize the dependence of the path, Gubinelli derivatives and remainders on the tensor product spaces in Definition~\ref{controlled} to keep track of the corresponding index of the tensor product and of the index with which we decrease the spatial regularity. Alternatively one could set for e.g.~\( E_{\alpha-j\gamma-\sigma}^d= L(\R^d, E_{\alpha-j\gamma-\sigma})  \) as in~\cite{GH19} and accordingly for the other elements in the space of controlled rough paths.

		\end{enumerate}
		
	\end{remark}
	\subsection{Sewing lemma}
	We naturally expect to define a rough integral for every \( \boldsymbol{\xi} \in \mathscr{D}^{\gamma, p}_{\mathbf{X},\alpha-\sigma}\left([0,T]\right) \) against \( \mathbf{X} \). The main idea is to apply a variant of the sewing lemma. The main technical aspect is to incorporate the definition of the controlled rough path and the control functions in the arguments of the sewing lemma. Before stating this result, let us first begin with some algebraic identities and introduce further notations.
	\begin{definition}\label{NOTTR}
		We consider an arbitrary time interval $[s,t]\subseteq [0,T]$. For $m\geq 1$ we set
		\begin{align*}
			\mathcal{D}_m[s,t] \coloneqq \left\{ s + \frac{2n + 1}{2^m}(t-s) \, :\, n = 0,\ldots, 2^{m-1} - 1 \right\}.
		\end{align*}
		For $\tau \in \mathcal{D}_m[s,t]$ with $\tau = s + \frac{2n + 1}{2^m}(t-s)$ we define
		\begin{align*}
			\tau_{-} \coloneqq s + \frac{2n}{2^m}(t-s) \quad \text{and} \quad \tau_{+} \coloneqq s + \frac{2n + 2}{2^m}(t-s).
		\end{align*}
	\end{definition}
	\begin{lemma}\label{increment}
		We assume that $[\tau_1,\tau_3]\subseteq [s,t]$, let $\boldsymbol{\xi} \in \mathscr{D}^{\gamma, p}_{\mathbf{X},\alpha-\sigma}\left([0,T]\right)$ and set
		\begin{align*}
			\Xi^{\tau_1,\tau_3}_{s,t} \coloneqq \sum_{0\leq j< N}U_{t,\tau_1}\xi^{j}_{\tau_1}\circ (\Pi^{j+1}(\mathbf{X}))_{\tau_1,\tau_3}.
		\end{align*} 
		Then we have for \( \tau_1 < \tau_2 < \tau_3 \) that
		\begin{align}\label{DIFFF}
			\begin{split}
				\Xi^{\tau_1,\tau_2}_{s,t}+\Xi^{\tau_2,\tau_3}_{s,t}-\Xi^{\tau_1,\tau_3}_{s,t} &= \sum_{0\leq j<N}U_{t,\tau_1} R^{j,N}_{\tau_1,\tau_2}\circ (\Pi^{j+1}(\mathbf{X}))_{\tau_2,\tau_3} \\&+\sum_{0\leq j<N}(U_{t,\tau_2}-U_{t,\tau_1})\xi^{j}_{\tau_2}\circ(\Pi^{j+1}(\mathbf{X}))_{\tau_2,\tau_3}.
			\end{split}
		\end{align}
	\end{lemma}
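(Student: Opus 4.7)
The plan is to establish \eqref{DIFFF} by purely algebraic manipulations combining three ingredients: Chen's multiplicativity relation for $\mathbf{X}$, the controlled-path expansion~\eqref{xii} specialised to $l=N$, and the trivial splitting $U_{t,\tau_2}=U_{t,\tau_1}+(U_{t,\tau_2}-U_{t,\tau_1})$. The strategy is to expand both $\Xi^{\tau_1,\tau_3}_{s,t}$ and $\Xi^{\tau_2,\tau_3}_{s,t}$ around the intermediate point $\tau_2$ and then to match the resulting ``cross'' double sums so that they cancel.

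First I would expand $\Xi^{\tau_1,\tau_3}_{s,t}$. Chen's relation for the multiplicative functional $\mathbf{X}$ yields, for every $0\leq j<N$,
\begin{align*}
\Pi^{j+1}(\mathbf{X})_{\tau_1,\tau_3}=\sum_{k=0}^{j+1}\Pi^{k}(\mathbf{X})_{\tau_1,\tau_2}\otimes\Pi^{j+1-k}(\mathbf{X})_{\tau_2,\tau_3},
\end{align*}
with the convention $\Pi^{0}(\mathbf{X})\equiv 1$. Isolating the boundary index $k=j+1$, which reproduces exactly $\Xi^{\tau_1,\tau_2}_{s,t}$, and $k=0$, and reindexing the intermediate range $1\leq k\leq j$ via $i\coloneqq j-k$, I obtain
\begin{align*}
\Xi^{\tau_1,\tau_3}_{s,t}=\Xi^{\tau_1,\tau_2}_{s,t}+\sum_{0\leq i<N}U_{t,\tau_1}\xi^{i}_{\tau_1}\circ\Pi^{i+1}(\mathbf{X})_{\tau_2,\tau_3}+\mathrm{S},
\end{align*}
where $\mathrm{S}\coloneqq\sum_{0\leq i<j<N}U_{t,\tau_1}\xi^{j}_{\tau_1}\circ\Pi^{j-i}(\mathbf{X})_{\tau_1,\tau_2}\otimes\Pi^{i+1}(\mathbf{X})_{\tau_2,\tau_3}$.

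Next I would expand $\Xi^{\tau_2,\tau_3}_{s,t}$. The splitting of $U_{t,\tau_2}$ immediately produces the second sum on the right--hand side of~\eqref{DIFFF}. For the part carrying $U_{t,\tau_1}$, I substitute the relation~\eqref{xii} with $l=N$,
\begin{align*}
\xi^{i}_{\tau_2}=\xi^{i}_{\tau_1}+\sum_{i<j<N}\xi^{j}_{\tau_1}\circ\Pi^{j-i}(\mathbf{X})_{\tau_1,\tau_2}+R^{i,N}_{\tau_1,\tau_2},
\end{align*}
for each $0\leq i<N$. This produces three groups of terms: the ``constant'' piece $\sum_{i}U_{t,\tau_1}\xi^{i}_{\tau_1}\circ\Pi^{i+1}(\mathbf{X})_{\tau_2,\tau_3}$, exactly the cross sum $\mathrm{S}$, and the remainder contribution $\sum_{i}U_{t,\tau_1}R^{i,N}_{\tau_1,\tau_2}\circ\Pi^{i+1}(\mathbf{X})_{\tau_2,\tau_3}$. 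Assembling $\Xi^{\tau_1,\tau_2}_{s,t}+\Xi^{\tau_2,\tau_3}_{s,t}-\Xi^{\tau_1,\tau_3}_{s,t}$ with the two expansions above, the occurrence of $\Xi^{\tau_1,\tau_2}_{s,t}$, the constant piece and the cross sum $\mathrm{S}$ all cancel pairwise, leaving precisely the right--hand side of~\eqref{DIFFF}. I do not expect a substantial obstacle: the identity is purely algebraic, and the only point requiring care is the bookkeeping of tensor indices, in particular the reindexing $k=j-i$ that identifies the cross sum produced by Chen's relation with the cross sum produced by the controlled-path expansion of $\xi^{i}_{\tau_2}$.
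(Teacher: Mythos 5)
Your proof is correct and takes essentially the same route as the paper: the paper condenses your Chen-relation expansion and the cancellation of the cross sums into the single algebraic identity \eqref{TGG} and then, exactly as you do, splits $U_{t,\tau_2}=U_{t,\tau_1}+(U_{t,\tau_2}-U_{t,\tau_1})$. The only microscopic bookkeeping point is that for $i=N-1$ your use of \eqref{xii} with $l=N$ rests on the convention $R^{N-1,N}_{\tau_1,\tau_2}:=\delta\xi^{N-1}_{\tau_1,\tau_2}$ (with empty intermediate sum), which is precisely the paper's convention, so nothing is missing.
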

	\begin{proof}
		The proof is rather algebraic. First, we note that by the definition of \( \boldsymbol{\xi} \), in particular \eqref{xii} and the multiplicity of \( \mathbf{X} \), the following identity holds true
		\begin{align}\label{TGG}
			\begin{split}
				&\sum_{0\leq j<N}\left(\xi^{j}_{\tau_1}\circ (\Pi^{j+1}(\mathbf{X}))_{\tau_1,\tau_2}+\xi^{j}_{\tau_2}\circ (\Pi^{j+1}(\mathbf{X}))_{\tau_2,\tau_3}-\xi^{j}_{\tau_1}\circ (\Pi^{j+1}(\mathbf{X}))_{\tau_1,\tau_3}\right)\\&=\sum_{0\leq j<N} R^{j,N}_{\tau_1,\tau_2}\circ (\Pi^{j+1}(\mathbf{X}))_{\tau_2,\tau_3}.
			\end{split}
		\end{align}
		By definition
		\begin{align*}
			&\Xi^{\tau_1,\tau_2}_{s,t}+\Xi^{\tau_2,\tau_3}_{s,t}-\Xi^{\tau_1,\tau_3}_{s,t}\\&=\sum_{0\leq j<N}U_{t,\tau_1}\left(\xi^{j}_{\tau_1}\circ (\Pi^{j+1}(\mathbf{X}))_{\tau_1,\tau_2}+\xi^{j}_{\tau_2}\circ (\Pi^{j+1}(\mathbf{X}))_{\tau_2,\tau_3}-\xi^{j}_{\tau_1}\circ (\Pi^{j+1}(\mathbf{X}))_{\tau_1,\tau_3}\right)
			\\&+\sum_{0\leq j<N}(U_{t,\tau_2}-U_{t,\tau_1})\xi^{j}_{\tau_2}\circ(\Pi^{j+1}(\mathbf{X}))_{\tau_2,\tau_3}.
		\end{align*}
		Therefore, it is sufficient to apply \eqref{TGG} to prove the claim.
	\end{proof}
	We are ready to define the integral, which is a first step in developing the solution theory of~\eqref{Main_Equation}. The main idea is to modify the sewing lemma accordingly in order to incorporate the control functions specified in Definition~\ref{controlled}.~This is technically challenging since we cannot work with the H\"older norms of the rough input or of the corresponding controlled rough path, since they prevent us from obtaining a final integrable bound for the norm of the solution of~\eqref{Main_Equation}. \\
	
	We first make the following assumption reflecting an interplay between the regularity of the noise $\gamma$, the loss of spatial regularity $\sigma<\gamma$, the number of iterated integrals $N$ and the parameter $p<\gamma$ arising in the definition of the control functions. In particular, the following assumption will imply that $\sigma<p<\gamma$.
	

	\begin{assumption}\label{ADssdfg}
		In the following we always assume that  
		\begin{align}\label{AASdf}  
			\frac{\sigma + N\gamma}{N+1} < p < \gamma.  
		\end{align}  
		For \(0 \leq i \leq N\) we further define 
		\begin{align*}  
			P_i := (i+1)p - i\gamma - \sigma,  
		\end{align*}  
		which is always positive due to   \eqref{AASdf}. Moreover, condition \eqref{AASdf} obviously yields that \( \sigma <p \). 
	\end{assumption}
	Now we can state the sewing lemma and define the rough integral for controlled rough paths $\boldsymbol{\xi}$ as given in  Definition~\ref{controlled} against rough paths $\mathbf{X}$.  We first recall that the number of iterated integrals $N$ and the regularity of the rough path $\gamma$ satisfy the following relation \( (N+1)\gamma > 1 \) and \( N\gamma < 1 \).
	\begin{theorem}\label{shdccsa}
		We	assume that \( \mathbf{X} \in \mathscr{C}^{\gamma, p}([0,T]) \) and let \( 0 \leq \sigma < \gamma \). Let Assumption \ref{ADssdfg} hold and let
		$
		\boldsymbol{\xi} \in \mathscr{D}^{\gamma, p}_{\mathbf{X},\alpha-\sigma}\left([0,T]\right).
		$
		For every \( [s,t] \subseteq [0,T] \), \( m \geq 0 \) and \( 0 \leq n \leq 2^m \), we set  
\[
\tau^{n}_{m} := s + \frac{n}{2^m} (t-s).
\]
Then, the following limit exists
		\begin{align}\label{HAYs}
			\int_{s}^{t}U_{t,v}\xi_{v}\circ\mathrm{d}\mathbf{X}_{v}:=\lim_{m\rightarrow\infty}\left(\sum_{0\leq n< 2^m}\sum_{0\leq j< N}U_{t,\tau^{n}_{m}}\xi^{j}_{\tau^{n}_{m}}\circ (\Pi^{j+1}(\mathbf{X}))_{\tau^{n}_{m},\tau^{n+1}_{m}}\right).
		\end{align}
		Moreover, for \( 0 \leq l \leq N \), we have 
		\begin{align}\label{sodais} 
			\begin{split}
				&\left|\int_{s}^{t}U_{t,v}\xi_{v}\circ\mathrm{d}\mathbf{X}_{v}-\sum_{0\leq j< N}U_{t,s}\xi^{j}_{s}\circ(\Pi^{{j}+1}(\mathbf{X}))_{s,t}\right|_{E_{\alpha-l\gamma}}\lesssim (t-s)^{lp}\times\\&\sum_{0\leq j<N}W_{\Pi^{j+1}(\mathbf{X}),\gamma,p}^{(j+1)(\gamma-p)}(s,t)\left( (t-s)^{P_{N}+l(\gamma-p)}W_{R^{j,N},\alpha-\sigma,\gamma,p,2}^{(N-j)(\gamma-p)}(s,t)+(t-s)^{P_{j}+l(\gamma-p)}\sup_{\tau\in [s,t]}\left\vert\xi^{j}_{\tau}\right\vert_{\alpha-j\gamma-\sigma}\right),
			\end{split}
		\end{align}
		where $W_{\Pi^j(\mathbf{X}),\gamma,p}$ is specified in~\eqref{AASdfd}.
	\end{theorem}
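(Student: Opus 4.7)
The plan is to run a dyadic sewing argument whose defects are controlled using the smoothing of the evolution family together with the control-function bounds of Definition~\ref{controlled}. Denote by $I_m$ the dyadic Riemann sum appearing on the right-hand side of \eqref{HAYs}. Since $I_0=\sum_{0\leq j<N}U_{t,s}\xi^j_s\circ(\Pi^{j+1}(\mathbf{X}))_{s,t}$, the theorem reduces to showing $(I_m)_{m\geq 0}$ is Cauchy in $E_{\alpha-l\gamma}$ and that the telescoping bound matches \eqref{sodais}. Applying Lemma~\ref{increment} to every pair of consecutive sub-intervals at level $m+1$ sharing an intermediate point $\tau\in\mathcal{D}_{m+1}[s,t]$, and writing $h=(t-s)/2^{m+1}$, one obtains
\[
I_{m+1}-I_m=\sum_{\tau\in\mathcal{D}_{m+1}[s,t]}\sum_{0\leq j<N}\bigl(U_{t,\tau_-}R^{j,N}_{\tau_-,\tau}\circ(\Pi^{j+1}(\mathbf{X}))_{\tau,\tau_+}+(U_{t,\tau}-U_{t,\tau_-})\xi^j_\tau\circ(\Pi^{j+1}(\mathbf{X}))_{\tau,\tau_+}\bigr).
\]

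The two flavours of summand are estimated pointwise. Specializing Definition~\ref{controlled} to the trivial partition $\{u,v\}$ yields $|(\Pi^{j+1}(\mathbf{X}))_{u,v}|\leq(v-u)^{(j+1)p}W_{\Pi^{j+1}(\mathbf{X}),\gamma,p}^{(j+1)(\gamma-p)}(u,v)$ and $|R^{j,N}_{u,v}|_{E_{\alpha-N\gamma-\sigma}}\leq(v-u)^{(N-j)p}W_{R^{j,N},\alpha-\sigma,\gamma,p,2}^{(N-j)(\gamma-p)}(u,v)$. For the remainder summand combine these with the smoothing estimate from \eqref{regularity}, $|U_{t,\tau_-}z|_{E_{\alpha-l\gamma}}\lesssim(t-\tau_-)^{-((N-l)\gamma+\sigma)}|z|_{E_{\alpha-N\gamma-\sigma}}$. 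For the semigroup-increment summand factor $U_{t,\tau_-}=U_{t,\tau}U_{\tau,\tau_-}$ and invoke $|(\mathrm{Id}-U_{\tau,\tau_-})\xi^j_\tau|_{E_{\alpha-N\gamma-\sigma}}\lesssim(\tau-\tau_-)^{(N-j)\gamma}|\xi^j_\tau|_{E_{\alpha-j\gamma-\sigma}}$ (legal since $(N-j)\gamma\leq N\gamma<1$) followed again by the smoothing of $U_{t,\tau}$.

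Summing over $\tau\in\mathcal{D}_{m+1}[s,t]$ then uses a discrete Hölder inequality together with subadditivity $\sum_\tau W(\tau_-,\tau_+)\leq W(s,t)$. Under Assumption~\ref{ADssdfg} one has $(N+1)(\gamma-p)<\gamma-\sigma<1$, which permits a three-exponent Hölder split with exponents $1/((N-j)(\gamma-p))$, $1/((j+1)(\gamma-p))$ and $1/(1-(N+1)(\gamma-p))$: the first two absorb the control factors with the precise powers demanded by \eqref{sodais}, while the third produces a factor $(2^m)^{1-(N+1)(\gamma-p)}$. The singular weight $(t-\tau_-)^{-((N-l)\gamma+\sigma)}$ is handled by integral comparison when $(N-l)\gamma+\sigma<1$ (giving $(t-s)^{-((N-l)\gamma+\sigma)}\cdot 2^m$) and by its pointwise maximum $h^{-((N-l)\gamma+\sigma)}$ otherwise; the analogous weight $(t-\tau)^{-((N-l)\gamma+\sigma)}$ in the semigroup-increment contribution is treated the same way. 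After collecting all factors of $h$ and $2^m$, the positivity of $P_j$ and $P_N$ from Assumption~\ref{ADssdfg} guarantees a net exponent $-\delta<0$ on $2^m$, so $\sum_m|I_{m+1}-I_m|_{E_{\alpha-l\gamma}}$ is a convergent geometric series producing $(t-s)^{P_N+l\gamma}=(t-s)^{lp}(t-s)^{P_N+l(\gamma-p)}$ times the remainder control, and $(t-s)^{P_j+l\gamma}=(t-s)^{lp}(t-s)^{P_j+l(\gamma-p)}$ times $\sup_\tau|\xi^j_\tau|_{\alpha-j\gamma-\sigma}$, which are precisely the two contributions in \eqref{sodais}.

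The principal difficulty lies in the simultaneous choice of Hölder exponents that (i) produces the \emph{exact} powers of the control functions $W_{R^{j,N},\alpha-\sigma,\gamma,p,2}(s,t)$ and $W_{\Pi^{j+1}(\mathbf{X}),\gamma,p}(s,t)$ required in \eqref{sodais} rather than weaker composite bounds, and (ii) keeps the $2^m$ factor summable in $m$. The case distinction on whether $(N-l)\gamma+\sigma$ is smaller or larger than $1$ is also delicate, since for small $l$ the singular weight need not be integrable; it is precisely the interplay between the surplus $P_j+l\gamma>0$ and $1-(N+1)(\gamma-p)>0$ that makes the full estimate work out to the specific powers of $(t-s)$ displayed in \eqref{sodais}.
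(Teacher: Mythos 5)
Your proposal follows essentially the same route as the paper's proof: dyadic Riemann sums, Lemma~\ref{increment} for the level-$m$ defect, the same splitting into a remainder contribution and a semigroup-increment contribution, pointwise bounds read off from Definition~\ref{controlled} and \eqref{regularity}, and a discrete H\"older inequality combined with the control property $\sum_\tau W(\tau_-,\tau_+)\le W(s,t)$ so that the full-interval controls appear with exactly the powers in \eqref{sodais}; the conditions $(N+1)\gamma>1$ and $P_j,P_N>0$ play the same role in both arguments. The only genuine deviations are bookkeeping choices: for the semigroup-increment term you route through the fixed intermediate space $E_{\alpha-N\gamma-\sigma}$ (i.e.\ the choice $\theta=N\gamma+\sigma$ in the paper's notation), whereas the paper keeps an auxiliary $\theta\in\bigl(\max\{N\gamma,\,1-\gamma+\sigma\},\,1\bigr)$ and absorbs the singular weight uniformly via $\tfrac{t-\tau_-}{t-s}\ge 2^{-m}$ and a small $\epsilon$, which removes your case distinction. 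One detail must be corrected in a full write-up: since the weight sits inside the H\"older factor raised to the conjugate exponent, your case split has to be made on $\tfrac{(N-l)\gamma+\sigma}{1-(N+1)(\gamma-p)}$ (resp.\ $\tfrac{(N-l)\gamma+\sigma}{1-(j+1)(\gamma-p)}$) versus $1$, not on $(N-l)\gamma+\sigma<1$; in the regime where the unscaled exponent is below $1$ but the scaled one is not, the integral-comparison formula $(t-s)^{-((N-l)\gamma+\sigma)}\cdot 2^m$ underestimates the relevant sum, and the correct treatment is that the sum is dominated by a constant times its largest term. With that correction both of your cases do reproduce the factors $(t-s)^{P_N+l\gamma}$ and $(t-s)^{P_j+l\gamma}$ together with a net negative exponent on $2^m$, so the argument closes exactly as in the paper.
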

	\begin{proof}
		Throughout the proof we use that $\mathbf{\xi}\in \mathscr{D}^{\gamma,p}_{\mathbf{X},\alpha-\sigma}([0,T])$ as specified in Definition \ref{controlled} and  
		the regularizing effect of parabolic evolution families given in~\eqref{regularity}. The idea of the proof is similar to the classical sewing lemma~\cite[Theorem 4.1]{GHT21} which involves H\"older norms  
		and~\cite[Lemma 2.5]{GVR25} that replaces the H\"older norms of the rough path $\mathbf{X}$ by control functions for rough paths of regularity $\gamma\in(1/3,1/2)$. Here we do not only have to deal with the control functions in~\eqref{AASdfd} but additionally have to incorporate the norm~\eqref{YHN123} defined on the space of controlled rough paths. We divide the proof into four steps for better readability. \\
		\textbf{Step one:}
		Let us fix \( [s,t] \subseteq [0,T] \). For \( m \geq 1 \), we define  
		\[
		\Gamma^{m}_{s,t} :=\sum_{0\leq n< 2^m}\sum_{0\leq j< N}U_{t,\tau^{n}_{m}}\xi^{j}_{\tau^{n}_{m}}\circ (\Pi^{j+1}(\mathbf{X}))_{\tau^{n}_{m},\tau^{n+1}_{m}}.
		\]  
	In particular
		\[
		\Gamma^{0}_{s,t} = \Xi^{s,t}_{s,t} = \sum_{0 \leq j < N} U_{t,s} \xi^{j}_{s} \circ (\Pi^{j+1}(\mathbf{X}))_{s,t}.
		\]  
	Then, from Lemma \ref{increment} and with some direct computations, we have for \( m \geq 1\)
		\begin{align*}
			\Gamma^{m}_{s,t}-\Gamma^{m-1}_{s,t}=\sum_{\tau\in D_{m}[s,t]}\sum_{0\leq j<N}\left(U_{t,\tau_-}R^{j,N}_{\tau_-,\tau}\circ(\Pi^{j+1}(\mathbf{X}))_{\tau,\tau_+}+(U_{t,\tau}-U_{t,\tau_-})\xi^{j}_{\tau}\circ(\Pi^{j+1}(\mathbf{X}))_{\tau,\tau_+}\right).
		\end{align*}
		Our aim is to estimate the following terms for every \( l \leq N \) 
		\begin{align}\label{DFFDA}
			\begin{split}
				&\sum_{m\geq 1}\left\vert\Gamma^{m}_{s,t}-\Gamma^{m-1}_{s,t}\right\vert_{E_{\alpha-l\gamma}}\leq \underbrace{\sum_{m\geq 1}\sum_{\tau\in D_{m}[s,t]}\sum_{0\leq j<N}\left\vert U_{t,\tau_-}R^{j,N}_{\tau_-,\tau}\circ(\Pi^{j+1}(\mathbf{X}))_{\tau,\tau_+}\right\vert_{E_{\alpha-l\gamma}}}_{:=E_{l}(s,t)}\\&+\underbrace{\sum_{m\geq 1}\sum_{\tau\in D_{m}[s,t]}\sum_{0\leq j<N}\left\vert(U_{t,\tau}-U_{t,\tau_-})\xi^{j}_{\tau}\circ(\Pi^{j+1}(\mathbf{X}))_{\tau,\tau_+}\right\vert_{E_{\alpha-l\gamma}}}_{:=Q_{l}(s,t)}.
			\end{split}
		\end{align}
		Our objective is to estimate \( E_{l}(s,t) \) and \( Q_{l}(s,t) \) which will be done in the following steps.
		\newline
		\textbf{Step two: } We begin with the estimate for $ E_{l}(s,t)$. 
		For \( \tau \in \mathcal{D}_{m}[s,t] \), we have \( \tau - \tau_{-} = \tau_{+} - \tau = \frac{t-s}{2^m} \). Thus for \( j < N \) we derive based on the regularizing properties of the parabolic evolution family given in~\eqref{regularity} combined with Definition~\ref{controlled} and~\eqref{AASdfd}
		\begin{align}\label{YHGBC}
			\begin{split}
				&\left\vert U_{t,\tau_-}R^{j,N}_{\tau_-,\tau}\circ(\Pi^{j+1}(\mathbf{X}))_{\tau,\tau_+}\right\vert_{E_{\alpha-l\gamma}}\leq \left\vert U_{t,\tau_-}R^{j,N}_{\tau_-,\tau}\right\vert_{L\left((\R^d)^{\otimes j+1},E_{\alpha-l\gamma}\right)}(\tau_{+}-\tau)^{p(j+1)}W_{\Pi^{j+1}(\mathbf{X}),\gamma,p}^{(j+1)(\gamma-p)}(\tau,\tau_+)\\&\lesssim (t-\tau_{-})^{\gamma(l-N)-\sigma}(\tau-\tau_{-})^{p(N-j)}(\tau_+-\tau)^{p(j+1)}W_{R^{j,N},\alpha-\sigma,\gamma,p,2}^{(N-j)(\gamma-p)}(\tau_{-},\tau)W_{\Pi^{j+1}(\mathbf{X}),\gamma,p}^{(j+1)(\gamma-p)}(\tau,\tau_+)\\
				&=(t-\tau_{-})^{\gamma(l-N)-\sigma}(\tau-\tau_{-})^{p(N-j)}(\tau_+-\tau)^{p(j+1)}\left(W_{R^{j,N},\alpha-\sigma,\gamma,p,2}^{\frac{N-j}{N+1}}(\tau_{-},\tau)W_{\Pi^{j+1}(\mathbf{X}),\gamma,p}^{\frac{j+1}{N+1}}(\tau,\tau_+)\right)^{(N+1)(\gamma-p)}\\&=(t-s)^{P_{N}+l\gamma}(\frac{1}{2^m})^{(N+1)p}\Big(\frac{t-\tau_-}{t-s}\Big)^{(l-N)\gamma-\sigma}\left(W_{R^{j,N},\alpha-\sigma,\gamma,p,2}^{\frac{N-j}{N+1}}(\tau_{-},\tau)W_{\Pi^{j+1}(\mathbf{X}),\gamma,p}^{\frac{j+1}{N+1}}(\tau,\tau_+)\right)^{(N+1)(\gamma-p)}.
			\end{split}
		\end{align}
		Since $(N+1)(\gamma-p) < 1$, it follows from the Hölder inequality that
		\begin{align}\label{SHHSBND}
			\begin{split}
				&\sum_{\tau\in \mathcal{D}_{m}[s,t]}\sum_{0\leq j<N}\Big(\frac{1}{2^m}\Big)^{(N+1)p}\Big(\frac{t-\tau_-}{t-s}\Big)^{(l-N)\gamma-\sigma}\left(W_{R^{j,N},\alpha-\sigma,\gamma,p,2}^{\frac{N-j}{N+1}}(\tau_{-},\tau)W_{\Pi^{j+1}(\mathbf{X}),\gamma,p}^{\frac{j+1}{N+1}}(\tau,\tau_+)\right)^{(N+1)(\gamma-p)}
				\\&\leq \sum_{0\leq j< N}\Bigg(
				\Big(\sum_{\tau\in \mathcal{D}_{m}[s,t]}\Big(\frac{1}{2^m}\Big)^{\frac{(N+1)p}{1-(N+1)(\gamma-p)}}\Big(\frac{t-\tau_-}{t-s}\Big)^{\frac{(l-N)\gamma-\sigma}{1-(N+1)(\gamma-p)}}\Big)^{1-(N+1)(\gamma-p)}\bigtimes\\&\quad\Big(\sum_{\tau\in \mathcal{D}_{m}[s,t]}W_{R^{j,N},\alpha-\sigma,\gamma,p,2}^{\frac{N-j}{N+1}}(\tau_{-},\tau)W_{\Pi^{j+1}(\mathbf{X}),\gamma,p}^{\frac{j+1}{N+1}}(\tau,\tau_+)\Big)^{(N+1)(\gamma-p)}\Bigg).
			\end{split}
		\end{align}
		Recall that \( W_{\Pi^{j+1}(\mathbf{X}),\gamma,p} \) and $W_{R^{j,N},\alpha-\sigma,\gamma,p,2}$ are assumed to be  controlled functions as introduced in Definition~\ref{pgamma:rp} and Definition \ref{controlled}. Therefore, by using the Hölder inequality again, we have
		\begin{align}
        \label{HNBZS}
			\begin{split}
				&\left(\sum_{\tau\in \mathcal{D}_{m}[s,t]}W_{R^{j,N},\alpha-\sigma,\gamma,p,2}^{\frac{N-j}{N+1}}(\tau_{-},\tau)W_{\Pi^{j+1}(\mathbf{X}),\gamma,p}^{\frac{j+1}{N+1}}(\tau,\tau_+)\right)^{(N+1)(\gamma-p)}\\&\leq \left( \sum_{\tau\in \mathcal{D}_{m}[s,t]}W_{R^{j,N},\alpha-\sigma,\gamma,p,2}(\tau_{-},\tau)\right)^{(N-j)(\gamma-p)}\left( \sum_{\tau\in \mathcal{D}_{m}[s,t]}W_{\Pi^{j+1}(\mathbf{X}),\gamma,p}(\tau,\tau_+)\right)^{(j+1)(\gamma-p)}
				\\& \leq W_{R^{j,N},\alpha-\sigma,\gamma,p,2}^{(N-j)(\gamma-p)}(s,t)W_{\Pi^{j+1}(\mathbf{X}),\gamma,p}^{(j+1)(\gamma-p)}(s,t).
			\end{split}
		\end{align}
		Consequently, from \eqref{DFFDA}, \eqref{YHGBC} and \eqref{HNBZS} we derive a bound for $E_l$ as 
		\begin{align}\label{A85ywere}
			\begin{split}
				&E_{l}(s,t)\lesssim (t-s)^{P_{N}+l\gamma}\left(\sum_{0\leq j<N}W_{R^{j,N},\alpha-\sigma,\gamma,p,2}^{(N-j)(\gamma-p)}(s,t)W_{\Pi^{j+1}(\mathbf{X}),\gamma,p}^{(j+1)(\gamma-p)}(s,t)\right)\times\\&\left(\sum_{m\geq 1}\left(\sum_{\tau\in \mathcal{D}_{m}[s,t]}\Big(\frac{1}{2^m}\Big)^{\frac{(N+1)p}{1-(N+1)(\gamma-p)}}\Big(\frac{t-\tau_-}{t-s}\Big)^{\frac{(l-N)\gamma-\sigma}{1-(N+1)(\gamma-p)}}\right)^{1-(N+1)(\gamma-p)}\right).
			\end{split}
		\end{align}
		We must ensure that the last term in the previous formula is finite. Note that for every \( \tau \in \mathcal{D}_{m}[s,t] \)  
        we can find \( 0 \leq n < 2^{m-1} \) such that \[
		\frac{t - \tau_{-}}{t - s} = 1 - \frac{n}{2^{m-1}}.
		\]
		We further have that $\frac{t - \tau_{-}}{t - s}=1 - \frac{n}{2^{m-1}}\geq \frac{1}{2^{m}}.$ Recalling that Assumption \ref{ADssdfg} holds, we choose 
		\[
		0<\epsilon<\min\left\lbrace \frac{P_N}{1-(N+1)(\gamma-p)},\frac{(N+1)\gamma-1}{1-(N+1)(\gamma-p)}\right\rbrace.	
		\]
		Consequently,  for every $\tau \in \mathcal{D}_{m}[s,t]$ and $0\leq l<N$ we obtain 
		\begin{align*}
			&\Big(\frac{1}{2^m}\Big)^{\frac{(N+1)p}{1-(N+1)(\gamma-p)}}\Big(\frac{t-\tau_-}{t-s}\Big)^{\frac{(l-N)\gamma-\sigma}{1-(N+1)(\gamma-p)}}\\ &\leq \Big(\frac{1}{2^m}\Big)^{1+\epsilon}\Big(\frac{t-\tau_-}{t-s}\Big)^{\frac{(l-N)\gamma+(N+1)p-\sigma}{1-(N+1)(\gamma-p)}-1-\epsilon}=\Big(\frac{1}{2^m}\Big)^{1+\epsilon}\Big(\frac{t-\tau_-}{t-s}\Big)^{\frac{P_{N}+l\gamma}{1-(N+1)(\gamma-p)}-1-\epsilon}.
		\end{align*}
		Thus, keeping in mind the definitions of $P_N$ and $\tau_{-}$, we further infer that
		\begin{align}\label{HBZVFS}
			\begin{split}
				&\sum_{m\geq 1}\left(\sum_{\tau\in \mathcal{D}_{m}[s,t]}\Big(\frac{1}{2^m}\Big)^{\frac{(N+1)p}{1-(N+1)(\gamma-p)}}\Big(\frac{t-\tau_-}{t-s}\Big)^{\frac{(l-N)\gamma-\sigma}{1-(N+1)(\gamma-p)}}\right)^{1-(N+1)(\gamma-p)}\\&\myquad[1]= \sum_{m\geq1}\left(\sum_{0\leq n< 2^{m-1}}\Big(\frac{1}{2^m}\Big)^{\frac{(N+1)p}{1-(N+1)(\gamma-p)}}\Big(1-\frac{n}{2^{m-1}}\Big)^{\frac{(l-N)\gamma-\sigma}{1-(N+1)(\gamma-p)}}\right)^{1-(N+1)(\gamma-p)}\\&\myquad[2]\leq \sum_{m\geq 1}\left(\sum_{0\leq n< 2^{m-1}}\Big(\frac{1}{2^m}\Big)^{1+\epsilon}\Big(1-\frac{n}{2^{m-1}}\Big)^{\frac{P_{N}+l\gamma}{1-(N+1)(\gamma-p)}-1-\epsilon}\right)^{1-(N+1)(\gamma-p)}\\&\myquad[3]\leq \sum_{m\geq1}\left(\frac{1}{2^m}\right)^{\epsilon\left(1-(N+1)(\gamma-p)\right)}\left(\sum_{0\leq n< 2^{m-1}}\Big(\frac{1}{2^m}\Big)\Big(1-\frac{n}{2^{m-1}}\Big)^{\frac{P_{N}+l\gamma}{1-(N+1)(\gamma-p)}-1-\epsilon}\right)^{1-(N+1)(\gamma-p)}\\&\myquad[4]\lesssim \left(\int_{0}^{1}(1-x)^{\frac{P_{N}+l\gamma}{1-(N+1)(\gamma-p)}-1-\epsilon}\mathrm{d}x\right)^{1-(N+1)(\gamma-p)}<\infty.
			\end{split}
		\end{align}
		Therefore, from \eqref{A85ywere} and \eqref{HBZVFS}, we obtain
		\begin{align}\label{BAGST}
			E_{l}(s,t)\lesssim (t-s)^{lp}(t-s)^{P_{N}+l(\gamma-p)}\sum
			_{0\leq j<N}W_{R^{j,N},\alpha-\sigma,\gamma,p,2}^{(N-j)(\gamma-p)}(s,t)W_{\Pi^{j+1}(\mathbf{X}),\gamma,p}^{(j+1)(\gamma-p)}(s,t).
		\end{align}
		\newline
		\textbf{Step three:} Now, we estimate \( Q_{l}(s,t) \) given in~\eqref{DFFDA}. 
		Since most of the calculations are similar to the previous step, we do not present all details. Recalling that $\xi^{j}$ takes values in $L\left((\R^d)^{\otimes j+1},E_{\alpha-j\gamma-\sigma}\right)$, we have
		\begin{align}\label{YHNBZa}
			\begin{split}
				&\left\vert(U_{t,\tau}-U_{t,\tau_-})\xi^{j}_{\tau}\circ(\Pi^{j+1}(\mathbf{X}))_{\tau,\tau_+}\right\vert_{E_{\alpha-l\gamma}}\\ &\quad\lesssim \left\vert (U_{t,\tau}-U_{t,\tau_-})\xi^{j}_{\tau}\right\vert_{L\left((\R^d)^{\otimes j+1},E_{\alpha-l\gamma}\right)}(\tau_{+}-\tau)^{p(j+1)}W_{\Pi^{j+1}(\mathbf{X}),\gamma,p}^{(j+1)(\gamma-p)}(\tau,\tau_+).
			\end{split}
		\end{align}
		We choose a parameter \( \theta \) such that
		\begin{align}\label{AAA}
			\begin{split}
				&N\gamma < \theta < 1,\\
				& 1-\gamma+\sigma<\theta.
			\end{split}
		\end{align}
		This choice is possible since \( N\gamma < 1 \) and \( \sigma < \gamma \), and it ensures that the following conditions are satisfied for every \( 0 \leq j < N \) and every \( 0 \leq l \leq N \):
		\begin{itemize}  
			\item \( \alpha - \theta < \alpha - j\gamma - \sigma \),  
			\item \( \alpha - \theta < \alpha - l\gamma \).  
		\end{itemize}  
		Then, from \eqref{regularity}, we conclude for this choice of $\theta$ that
		\begin{align}\label{UJNXa}
			\begin{split}
				&\left\vert (U_{t,\tau}-U_{t,\tau_-})\xi^{j}_{\tau}\right\vert_{L\left((\R^d)^{\otimes j+1},E_{\alpha-l\gamma}\right)}=\left\vert U_{t,\tau}(I-U_{\tau,\tau_-})\xi^{j}_{\tau}\right\vert_{L\left((\R^d)^{\otimes j+1},E_{\alpha-l\gamma}\right)}\\&\quad \lesssim \left\vert U_{t,\tau}\right\vert_{L(E_{\alpha -\theta},E_{\alpha-l\gamma})}\left\vert (I-U_{\tau,\tau_-})\xi^{j}_{\tau}\right\vert_{L\left((\R^d)^{\otimes j+1},E_{\alpha-\theta}\right)}\\&\qquad\lesssim (t-\tau)^{l\gamma-\theta}(\tau-\tau_{-})^{\theta-j\gamma-\sigma}\sup_{\tau\in [s,t]}\left\vert\xi^{j}_{\tau}\right\vert_{L\left((\R^d)^{\otimes j+1},E_{\alpha-j\gamma-\sigma}\right)}.
			\end{split}
		\end{align}
		So, from \eqref{YHNBZa} and \eqref{UJNXa}
		\begin{align}\label{II54}
			\begin{split}
				&Q_{l}(s,t)=\sum_{m\geq 1}\sum_{\tau\in D_{m}[s,t]}\sum_{0\leq j<N}\left\vert(U_{t,\tau}-U_{t,\tau_-})\xi^{j}_{\tau}\circ(\Pi^{j+1}(\mathbf{X}))_{\tau,\tau_+}\right\vert_{E_{\alpha-l\gamma}}\\&\lesssim \sum_{0\leq j<N}(t-s)^{lp+l(\gamma-p)+P_{j}}\sup_{\tau\in [s,t]}\left\vert\xi^{j}_{\tau}\right\vert_{\alpha-j\gamma-\sigma}\times \\&\quad\left(\sum_{m\geq 1}\sum_{\tau\in D_{m}[s,t]}\Big(\frac{1}{2^m}\Big)^{P_{j}+\theta}\Big(\frac{t-\tau_-}{t-s}\Big)^{l\gamma-\theta}W_{\Pi^{j+1}(\mathbf{X}),\gamma,p}^{(j+1)(\gamma-p)}(\tau,\tau_+)\right).
			\end{split}
		\end{align}
		Since \( (j+1)(\gamma - p) < 1 \) holds for all \( 0 \leq j < N \), and by a similar argument to that used in \eqref{SHHSBND} and \eqref{HNBZS}, we conclude that
		\begin{align}\label{sidua654}
			\begin{split}
				&\sum_{m\geq 1}\sum_{\tau\in D_{m}[s,t]}\Big(\frac{1}{2^m}\Big)^{P_{j}+\theta}\Big(\frac{t-\tau_-}{t-s}\Big)^{l\gamma-\theta}W_{\Pi^{j+1}(\mathbf{X}),\gamma,p}^{(j+1)(\gamma-p)}(\tau,\tau_+)\\& \leq \sum_{m\geq 1}\left(\sum_{\tau\in D_{m}[s,t]}\Big(\frac{1}{2^m}\Big)^{\frac{P_{j}+\theta}{1-(j+1)(\gamma-p)}}
            \Big(\frac{t-\tau_-}{t-s}\Big)^{\frac{l\gamma-\theta}{1-(j+1)(\gamma-p)}}\right)^{1-(j+1)(\gamma-p)}\times\\&\qquad\left(\sum_{\tau\in D_{m}[s,t]}W_{\Pi^{j+1}(\mathbf{X}),\gamma,p}(\tau,\tau_+)\right)^{(j+1)(\gamma-p)}\\
				&\myquad[3]\leq W_{\Pi^{j+1}(\mathbf{X}),\gamma,p}^{(j+1)(\gamma-p)}(s,t)\sum_{m\geq 1}\left(\sum_{\tau\in D_{m}[s,t]}\Big(\frac{1}{2^m}\Big)^{\frac{P_{j}+\theta}{1-(j+1)(\gamma-p)}}\Big(\frac{t-\tau_-}{t-s}\Big)^{\frac{l\gamma-\theta}{1-(j+1)(\gamma-p)}}\right)^{1-(j+1)(\gamma-p)}.
			\end{split}
		\end{align}
		We need to verify that the summation term does not grow large as the size of the partition becomes smaller. The reasoning closely resembles the previous case, i.e., \eqref{HBZVFS}, though a few minor adjustments need to be considered. Our choice of \( \theta \) in \eqref{AAA} implies that 
		\begin{align}\label{hasnx}
			\frac{P_{j}+\theta}{1-(j+1)(\gamma-p)}>1.
		\end{align}
		Thus, for every \( \tau \in \mathcal{D}_{m}[s,t] \), since \( \frac{1}{2^m} \leq \frac{t-\tau_-}{t-s} \) and $p>\frac{\sigma+N\gamma}{(N+1)}$, we get 
		\begin{align}\label{UJMMZLa}
			\Big(\frac{1}{2^m}\Big)^{\frac{P_{j}+\theta}{1-(j+1)(\gamma-p)}}\Big(\frac{t-\tau_-}{t-s}\Big)^{\frac{l\gamma-\theta}{1-(j+1)(\gamma-p)}}\leq \Big(\frac{1}{2^m}\Big)^{1+\epsilon} \Big(\frac{t-\tau_-}{t-s}\Big)^{\frac{P_{j}+l\gamma}{1-(j+1)(\gamma-p)}-1-\epsilon},
		\end{align}
		where 
		\begin{align*}
			0<	\epsilon< \min\left\lbrace \frac{P_{N}}{1-(N+1)(\gamma-p)}, \min\left\lbrace\frac{P_{j}+\theta}{1-(j+1)(\gamma-p)}-1\right\rbrace_{0\leq j< N}\right\rbrace.		\end{align*}
		Note that this choice of \(\epsilon\) is valid due to \eqref{hasnx} and Assumption \ref{ADssdfg}. This implies that for every \(0 \leq j < N\) and every \(0 \leq l \leq N\) we have
		\begin{align*}
			\frac{P_{j}+l\gamma}{1-(j+1)(\gamma-p)}-1-\epsilon+1=\frac{P_{j}+l\gamma}{1-(j+1)(\gamma-p)}-\epsilon>0.
		\end{align*}
		Consequently, as in~\eqref{HBZVFS} we derive 
		\begin{align}\label{AUysaa}
			\begin{split}
				&\sum_{m\geq 1}\left(\sum_{\tau\in D_{m}[s,t]}\Big(\frac{1}{2^m}\Big)^{\frac{P_{j}+\theta}{1-(j+1)(\gamma-p)}}\Big(\frac{t-\tau_-}{t-s}\Big)^{\frac{l\gamma-\theta}{1-(j+1)(\gamma-p)}}\right)^{1-(j+1)(\gamma-p)}\\&\quad\lesssim \sum_{m\geq 1}\Big(\frac{1}{2^m}\Big)^{\epsilon(1-(j+1)(\gamma-p))}\left(\sum_{0\leq n< 2^{m-1}}\Big(\frac{1}{2^m}\Big)\Big(1-\frac{n}{2^{m-1}}\Big)^{\frac{P_{j}+l\gamma}{1-(j+1)(\gamma-p)}-1-\epsilon}\right)^{1-(j+1)(\gamma-p)}\\&\qquad\lesssim \left(\int_{0}^{1}(1-x)^{\frac{P_{j}+l\gamma}{1-(j+1)(\gamma-p)}-1-\epsilon}~\mathrm{d} x\right)^{1-(j+1)(\gamma-p)}<\infty.
			\end{split}
		\end{align}
		Note that the estimate  \eqref{AUysaa} depends on \( j \), but we take the maximal bound over all \( 0 \leq j < N \). Now combining \eqref{II54} and \eqref{sidua654} and \eqref{AUysaa} we derive
		\begin{align}\label{sdjfs0}
			Q_{l}(s,t)\lesssim (t-s)^{lp}\sum_{0\leq j<N}(t-s)^{l(\gamma-p)+P_{j}}W_{\Pi^{j+1}(\mathbf{X}),\gamma,p}^{(j+1)(\gamma-p)}(s,t)\sup_{\tau\in [s,t]}\left\vert\xi^{j}_{\tau}\right\vert_{\alpha-j\gamma-\sigma}.
		\end{align}
		\textbf{Step four:}  Now we have all the estimates required in order to prove our claim. From \eqref{DFFDA},~\eqref{BAGST} and \eqref{sdjfs0}, for every $0\leq l\leq N$, we conclude
		\begin{align}
			\sum_{m\geq 1}\left\vert\Gamma^{m}_{s,t}-\Gamma^{m-1}_{s,t}\right\vert_{\alpha-l\gamma}<\infty.
		\end{align}
		Therefore, we can infer that the following limit exists
		\begin{align*}
			\int_{s}^{t}U_{t,v}\xi_{v}\circ\mathrm{d}\mathbf{X}_{v}:=\lim_{m\rightarrow\infty}\left(\sum_{0\leq n< 2^m}\sum_{0\leq j< N}U_{t,\tau^{n}_{m}}\xi^{j}_{\tau^{n}_{m}}\circ (\Pi^{j+1}(\mathbf{X}))_{\tau^{n}_{m},\tau^{n+1}_{m}}\right)=\lim_{m\rightarrow\infty}\Gamma^{m}_{s,t}.
		\end{align*}
		Furthermore, we have for every $0\leq l\leq N$ that
		\begin{align*}
			\Bigg|\int_{s}^{t}U_{t,v}\xi_{v}\circ\mathrm{d}\mathbf{X}_{v}-\underbrace{\sum_{0\leq j< N}U_{t,s}\xi^{j}_{s}\circ(\Pi^{j+1}(\mathbf{X}))_{s,t}}_{\Gamma^{0}_{s,t}}\Bigg|_{E_{\alpha-l\gamma}}\leq \sum_{m\geq 1}\left\vert\Gamma^{m}_{s,t}-\Gamma^{m-1}_{s,t}\right\vert_{E_{\alpha-l\gamma}}.
		\end{align*}
		Now, from \eqref{DFFDA}, \eqref{BAGST}, and \eqref{sdjfs0}, we conclude that
		\begin{align*}
			&\Bigg|\int_{s}^{t}U_{t,v}\xi_{v}\circ\mathrm{d}\mathbf{X}_{v}-\underbrace{\sum_{0\leq j< N}U_{t,s}\xi^{j}_{s}\circ(\Pi^{j+1}(\mathbf{X}))_{s,t}}_{\Gamma^{0}_{s,t}}\Bigg|_{\alpha-l\gamma}\leq E_{l}(s,t)+Q_{l}(s,t) \lesssim (t-s)^{lp}\times\\
			&\sum_{0\leq j<N}W_{\Pi^{j+1}(\mathbf{X}),\gamma,p}^{(j+1)(\gamma-p)}(s,t)\left( (t-s)^{P_{N}+l(\gamma-p)}W_{R^{j,N},\alpha-\sigma,\gamma,p,2}^{(N-j)(\gamma-p)}(s,t)+(t-s)^{P_{j}+l(\gamma-p)}\sup_{\tau\in [s,t]}\left\vert\xi^{j}_{\tau}\right\vert_{\alpha-j\gamma-\sigma}\right).
		\end{align*}
		This proves the statement, i.e., \eqref{sodais}.
	\end{proof}
	As a direct consequence of Theorem \ref{shdccsa}, we can prove the following result. 
	\begin{corollary}\label{saswf65}
		We assume the same setting as in Theorem \ref{shdccsa}. Then for every \( [s,t] \subseteq [0,T] \), we have the following uniform bound of the rough integral
		\begin{align}\label{sodaiwqes}
			\begin{split}
				&\left|\int_{s}^{t}U_{t,v}\xi_{v}\circ\mathrm{d}\mathbf{X}_{v}\right|_{E_{\alpha}}\lesssim \\&\sum_{0\leq j<N}W_{\Pi^{j+1}(\mathbf{X}),\gamma,p}^{(j+1)(\gamma-p)}(s,t)\left( (t-s)^{P_{N}}W_{R^{j,N},\alpha-\sigma,\gamma,p,2}^{(N-j)(\gamma-p)}(s,t)+(t-s)^{P_{j}}\sup_{\tau\in [s,t]}\left\vert\xi^{j}_{\tau}\right\vert_{\alpha-j\gamma-\sigma}\right).
			\end{split}
		\end{align}
	\end{corollary}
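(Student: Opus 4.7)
The proof is a short consequence of Theorem~\ref{shdccsa}. The plan is to split the rough integral via the triangle inequality as
\begin{align*}
\int_{s}^{t}U_{t,v}\xi_{v}\circ\mathrm{d}\mathbf{X}_{v} = \Big(\int_{s}^{t}U_{t,v}\xi_{v}\circ\mathrm{d}\mathbf{X}_{v} - \Gamma^{0}_{s,t}\Big) + \Gamma^{0}_{s,t},
\end{align*}
where $\Gamma^{0}_{s,t}=\sum_{0\leq j<N}U_{t,s}\xi^{j}_{s}\circ(\Pi^{j+1}(\mathbf{X}))_{s,t}$ is the local approximation.

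For the first summand I would apply \eqref{sodais} from Theorem~\ref{shdccsa} with the choice $l=0$. Since $P_N+l(\gamma-p)=P_N$ and $P_j+l(\gamma-p)=P_j$ in this case, and since the prefactor $(t-s)^{lp}$ becomes $1$, the bound on this term already matches exactly the right-hand side claimed in \eqref{sodaiwqes}.

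For the boundary term $\Gamma^{0}_{s,t}$ I would estimate each summand using the regularizing property \eqref{regularity} of the parabolic evolution family, namely $|U_{t,s}x|_{E_\alpha}\lesssim (t-s)^{-(j\gamma+\sigma)}|x|_{E_{\alpha-j\gamma-\sigma}}$ when $x\in E_{\alpha-j\gamma-\sigma}$, combined with the $(p,\gamma)$-rough path bound
\begin{align*}
|(\Pi^{j+1}(\mathbf{X}))_{s,t}|\le (t-s)^{(j+1)p}\,W_{\Pi^{j+1}(\mathbf{X}),\gamma,p}^{(j+1)(\gamma-p)}(s,t),
\end{align*}
which is immediate from Definition~\ref{pgamma:rp} applied to the trivial partition. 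The resulting factor is $(t-s)^{(j+1)p-j\gamma-\sigma}=(t-s)^{P_j}$, exactly the exponent appearing in the second term of \eqref{sodaiwqes}, multiplied by $\sup_{\tau\in[s,t]}|\xi^j_\tau|_{\alpha-j\gamma-\sigma}$ and $W_{\Pi^{j+1}(\mathbf{X}),\gamma,p}^{(j+1)(\gamma-p)}(s,t)$.

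Summing over $0\le j<N$ and adding to the bound from Theorem~\ref{shdccsa} yields \eqref{sodaiwqes}; the two contributions to the second term (from the sewing estimate and from the boundary term) can be absorbed into a single constant. There is essentially no obstacle: the only point worth noting is that Assumption~\ref{ADssdfg} ensures $P_j>0$ for every $0\le j\le N$, so all powers of $(t-s)$ that appear are nonnegative and the bound is meaningful on small intervals.
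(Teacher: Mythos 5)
Your proposal is correct and follows essentially the same route as the paper: the triangle-inequality split into the sewing remainder (bounded by \eqref{sodais} with $l=0$) plus the local approximation $\Gamma^{0}_{s,t}$, with the latter estimated via \eqref{regularity} and the control-function bound $|(\Pi^{j+1}(\mathbf{X}))_{s,t}|\leq (t-s)^{(j+1)p}W_{\Pi^{j+1}(\mathbf{X}),\gamma,p}^{(j+1)(\gamma-p)}(s,t)$, producing the factor $(t-s)^{P_j}$. No gaps.
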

	\begin{proof}
		The idea is to apply Theorem \ref{shdccsa}. First, note that
		\begin{align}\label{UAJ12}
			\begin{split}
				&\left\vert\int_{s}^{t}U_{t,v}\xi_{v}\circ\mathrm{d}\mathbf{X}_{v}\right\vert_{E_{\alpha}}\\&\leq \left|\int_{s}^{t}U_{t,v}\xi_{v}\circ\mathrm{d}\mathbf{X}_{v}-\sum_{0\leq j< N}U_{t,s}\xi^{j}_{s}\circ(\Pi^{i+1}(\mathbf{X}))_{s,t}\right|_{E_{\alpha}}+\sum_{0\leq j< N}\left\vert U_{t,s}\xi^{j}_{s}\circ(\Pi^{i+1}(\mathbf{X}))_{s,t}\right\vert_{E_{\alpha}}.
			\end{split}
		\end{align}
		Then, since $\xi^j\in L(\mathbb{R}^d, E_{\alpha - j\gamma - \sigma}) $ we obtain from \eqref{regularity} that
		\begin{align}\label{UAJ13}
			\begin{split}
				&\sum_{0\leq j< N}\left\vert U_{t,s}\xi^{j}_{s}\circ(\Pi^{i+1}(\mathbf{X}))_{s,t}\right\vert_{E_{\alpha}}\\&\lesssim\sum_{0\leq j<N}(t-s)^{(j+1)p-j\gamma-\sigma}W_{\Pi^{j+1}(\mathbf{X}),\gamma,p}^{(j+1)(\gamma-p)}(s,t)\sup_{\tau\in [s,t]}\left\vert\xi^{j}_{\tau}\right\vert_{\alpha-j\gamma-\sigma}.
			\end{split}
		\end{align}
		From \eqref{sodais} and for \( l = 0 \), we conclude that
		\begin{align}\label{UAJ14}
			\begin{split}
				&\left|\int_{s}^{t}U_{t,v}\xi_{v}\circ\mathrm{d}\mathbf{X}_{v}-\sum_{0\leq j< N}U_{t,s}\xi^{j}_{s}\circ(\Pi^{i+1}(\mathbf{X}))_{s,t}\right|_{\alpha}\\&\lesssim\sum_{0\leq j<N}W_{\Pi^{j+1}(\mathbf{X}),\gamma,p}^{(j+1)(\gamma-p)}(s,t)\left( (t-s)^{P_{N}}W_{R^{j,N},\alpha-\sigma,\gamma,p,2}^{(N-j)(\gamma-p)}(s,t)+(t-s)^{P_{j}}\sup_{\tau\in [s,t]}\left\vert\xi^{j}_{\tau}\right\vert_{\alpha-j\gamma-\sigma}\right).
			\end{split}
		\end{align}
		The claim follows combining  \eqref{UAJ12}, \eqref{UAJ13} and \eqref{UAJ14}.
	\end{proof}
	
	Based on Theorem \ref{shdccsa}, we can define for every controlled rough path \( \mathbf{\xi} \in \mathscr{D}^{\gamma, p}_{\mathbf{X},\alpha-\sigma}\left([0,T]\right) \) an integral with respect to \( \mathbf{X} \). Naturally, the rough integral together with its Gubinelli derivatives forms again a controlled rough path with respect to \( \mathbf{X} \). We establish this in Proposition~\ref{UJMMA}.
	\begin{remark}
		Recall that for \( \mathbf{\xi} \in \mathscr{D}^{\gamma, p}_{\mathbf{X}, \alpha - \sigma}\left([0,T]\right) \), we defined terms such as \( W_{R^{i,l}, \alpha - \sigma, \gamma, p, 1}(s,t) \) and \( W_{R^{i,l}, \alpha - \sigma, \gamma, p, 2}(s,t) \). In Theorem \ref{shdccsa} we did not use these terms when \( l < N \). 
		However, {the role of these terms becomes clear in Propositions \ref{ATSY} and \ref{UJMMA}. }
	\end{remark}
	

	The following definition closely resembles Definition \ref{controlled} with two main differences. One of them is that the controlled rough path \( \boldsymbol{\tilde{\xi}} \) directly takes values in \( E_\alpha \) instead of \( E_{\alpha-\sigma} \) for \( \sigma < \gamma \). Furthermore, we also modify the index of the tensor product appearing in the definition of the controlled rough path  \( \boldsymbol{\tilde{\xi}} \). 
    This is done for technical reasons which will become clear in  Proposition~\ref{ATSY} and Proposition~\ref{UJMMA}, where given a path $\boldsymbol{\xi} \in \mathscr{D}^{\gamma,p}_{\mathbf{X},\alpha-\sigma} $ we construct another path $\boldsymbol{\tilde{\xi}}$ shifting the Gubinelli derivatives of $\boldsymbol{\xi}$ accordingly. The new controlled rough path $\boldsymbol{\tilde{\xi}}$ satisfies the properties stated in the next definition. 

    \begin{definition}\label{controlled2}
		We assume that \( \mathbf{X} \in \mathscr{C}^{\gamma, p}([0,T]) \). We say that \( \tilde{\boldsymbol{\xi}} = (\tilde{\xi}^{i})_{0 \leq i < N} \) is controlled by \( \mathbf{X} \) in $\left(L\left((\R^d)^{\otimes j},E_{\alpha-j\gamma}\right)\right)_{0\leq j<N}$ if we have for every \( 0 \leq i\leq j < N \) that 
		\begin{align*}
			&\tilde{\xi}^j: [0,T] \to L\left((\R^d)^{\otimes j},E_{\alpha-j\gamma}\right)=L\left((\R^d)^{\otimes j-i},L\left((\R^d)^{\otimes i},E_{\alpha-j\gamma}\right)\right),
		\end{align*}
		Moreover, for every \( 0\leq i, l \leq N \) with \( l - i > 1 \)
		\begin{align}\label{xiii}
			\delta \tilde{\xi}^{i}_{s,t}=\sum_{i<j<l}\tilde{\xi}^{j}_s\circ (\Pi^{j-i}(\mathbf{X}))_{s,t}+\tilde{R}^{i,l}_{s,t}
		\end{align}
		such that the following properties hold:
		\begin{enumerate}
			\item $\sup_{\tau\in [0,T]}\left\vert\tilde\xi^{i}_{\tau}\right\vert_{L\left((\R^d)^{\otimes i},E_{\alpha-i\gamma}\right)}<\infty$, for every $0\leq i<N$.
			\item For every \( 1\leq i, l \leq N \) with \( l - i > 1 \) and every \( [s,t] \subseteq [0,T] \) we define
			\begin{align*}
				&\tilde{W}_{\tilde{R}^{i,l},\alpha,\gamma,p,1}(s,t):=	\sup_{\pi \in P[s,t]} \left\{\sum_{k} \frac{\left|\tilde{R}^{i,l}_{\tau_{k},\tau_{k+1}}\right|_{L\left((\R^d)^{\otimes i},E_{\alpha-(l-1)\gamma}\right)}^{\frac{1}{(l-i-1)(\gamma-p)}}}{(\tau_{k+1}-\tau_k)^{\frac{p}{\gamma-p}}} \right\}<\infty,
				\\&\tilde{W}_{\tilde{R}^{i,l},\alpha,\gamma,p,2}(s,t):=	\sup_{\pi \in P[s,t]} \left\{\sum_{k} \frac{\left|\tilde{R}^{i,l}_{\tau_{k},\tau_{k+1}}\right|_{L\left((\R^d)^{\otimes i},E_{\alpha-l\gamma}\right)}^{\frac{1}{(l-i)(\gamma-p)}}}{(\tau_{k+1}-\tau_k)^{\frac{p}{\gamma-p}}} \right\}<\infty.
			\end{align*}
			\item  For every \( 1 \leq i < N \) and every \( [s,t] \subseteq [0,T] \), we define \( \tilde{R}^{i,i+1}_{s,t} := \delta \tilde\xi^i_{s,t} \) and assume that
			\begin{align*}
				\tilde{W}_{\tilde{R}^{i,i+1},\alpha,\gamma,p}(s,t):=	\sup_{\pi \in P[s,t]} \left\{\sum_{k} \frac{\left|  \delta \tilde\xi^i_{\tau_{k},\tau_{k+1}}\right|_{L\left((\R^d)^{\otimes i},E_{\alpha-(i+1)\gamma}\right)}^{\frac{1}{\gamma-p}}}{(\tau_{k+1}-\tau_k)^{\frac{p}{\gamma-p}}} \right\}<\infty.
			\end{align*}
			We adopt the convention that for \( i = N-1 \)
			\begin{align}\label{NNAN}
				\tilde{W}_{\tilde{R}^{N-1,N},\alpha,\gamma,p,2}(s,t):= \tilde{W}_{\tilde{R}^{N-1,N},\alpha,\gamma,p}(s,t),
			\end{align}
			which is again a control function by the previous assumptions.
		\end{enumerate}
		We use \( \tilde{\mathscr{D}}^{\gamma, p}_{\mathbf{X},\alpha}\left([0,T]\right) \) to denote the space of all such elements, equipped with the following norm 
		\begin{align}\label{YHN457}
			\begin{split}
				&\Vert\tilde{\boldsymbol{\xi}}\Vert_{\tilde{\mathscr{D}}^{\gamma, p}_{\mathbf{X},\alpha}\left([0,T]\right)}:=\sum \tilde{W}_{\tilde{R}^{i,i+1},\alpha,\gamma,p}^{\gamma-p}(0,T)\\&+\sum\left(\sup_{\tau\in [0,T]}\left\vert\tilde\xi^{i}_{\tau}\right\vert_{L\left((\R^d)^{\otimes i},E_{\alpha-i\gamma}\right)}+\tilde{W}_{\tilde{R}^{i,l},\alpha,\gamma,p,1}^{(l-i-1)(\gamma-p)}(0,T)+\tilde{W}_{\tilde{R}^{i,l},\alpha,\gamma,p,2}^{(l-i)(\gamma-p)}(0,T)\right),
			\end{split}
		\end{align}
		where the summations are taken over all indices $i$ and $l$ satisfying the conditions specified in (1)--(3) above.
	\end{definition}

	Previously, we defined in Theorem~\ref{shdccsa} for every \( \boldsymbol{\xi} \in \mathscr{D}^{\gamma, p}_{\mathbf{X},\alpha-\sigma} \) the integral with respect to \( \mathbf{X} \). Our aim now is to show that the rough integral together with its Gubinelli derivatives forms a controlled rough path with respct to \( \mathbf{X} \) within the space \( (E_{\alpha-j\gamma})_{0 \leq j < N} \), as specified in Definition \ref{controlled2}.~First, we begin by defining the Gubinelli derivatives, which are naturally obtained by shifting the elements of the original path by one step.~As already stated, given a controlled rough path $\boldsymbol{\xi} \in \mathscr{D}^{\gamma,p}_{\mathbf{X},\alpha-\sigma} $ we construct another control rough path $\boldsymbol{\tilde{\xi}} \in \mathscr{D}^{\gamma,p}_{\mathbf{X},\alpha-\sigma} $ shifting the Gubinelli derivatives of the original path accordingly. \\

    Therefore we obtain the next statement as a consequence of the interpolation inequality~\eqref{interpolation:ineq}. This entails a bound for the terms defining the norm~\eqref{YHN457} on ${\tilde{\cD}}^{\gamma,p}_{\mathbf{X},\alpha}$ in terms of the controlled functions specified in Definition~\ref{controlled}.
	\begin{proposition}\label{ATSY}
		Assume \( \boldsymbol{\xi} \in \mathscr{D}^{\gamma, p}_{\mathbf{X},\alpha-\sigma}\left([0,T]\right)\). For $1\leq i<N$, we define 
		\begin{align*}
			&\tilde{\xi}^{i}:= [0,T] \to L\left((\R^d)^{\otimes i},L(\R^d,E_{\alpha-i\gamma})\right)=L\left((\R^d)^{\otimes i+1},E_{\alpha-i\gamma}\right),\\
			&\tilde{\xi}^{i}_\tau:=\xi^{i-1}_{\tau}.
		\end{align*}
		For $1\leq i,l\leq N$ with $l-i>1$, we set $\tilde{R}^{i,l}(s,t):=R^{i-1,l-1}$.
		Then for $[s,t] \subseteq [0,T]$, the following properties hold. 
		\begin{itemize}
			\item [\textnormal{\textbf{ I)}}] \label{Item I} 
			\begin{align*}
				\begin{split}
					&\sup_{\tau\in [s,t]}\left\vert\tilde{\xi}^{i}_{\tau}\right\vert_{L\left((\R^d)^{\otimes i},E_{\alpha-i\gamma}\right)}\lesssim \vert\xi^{i-1}_s\vert_{L\left((\R^d)^{\otimes i},E_{\alpha-(i-1)\gamma-\sigma}\right)}\\&+\left(W_{\Pi^{1}(\mathbf{X}),\gamma,p}^{\gamma-\sigma}(s,t)+(t-s)^{\gamma-\sigma}+(t-s)^{\frac{p(\gamma-\sigma)}{\gamma}}\right)\Vert\boldsymbol{\xi}\Vert_{\mathscr{D}^{\gamma, p}_{\mathbf{X},\alpha-\sigma}\left([0,T]\right)}.
				\end{split}
			\end{align*}
			\item [\textnormal{\textbf{ II)}}]	\label{Item II} 
			\begin{align*}
				\begin{split}
					\tilde{W}_{\tilde{R}^{i,l},\alpha,\gamma,p,1}(s,t)\lesssim(t-s)^{\frac{p(\gamma-\sigma)}{\gamma}}\left( W_{R^{i-1,l-1},\alpha-\sigma,\gamma,p,1}^{(l-i-1)(\gamma-p)}(s,t)\right)^{\frac{\sigma}{\gamma}} \left(W_{R^{i-1,l-1},\alpha-\sigma,\gamma,p,2}^{(l-i)(\gamma-p)}(s,t)\right)^{\frac{\gamma-\sigma}{\gamma}}. 
				\end{split}
			\end{align*} 
			\item[\textnormal{\textbf{ III)}}]	\label{Item III} 
			\begin{align*}
				\begin{split}
					\tilde{W}_{\tilde{R}^{i,l},\alpha,\gamma,p,2}(s,t)&\lesssim \sup_{\tau\in [s,t]}\vert\xi^{l-1}_{\tau}\vert_{\alpha-(l-1)\gamma-\sigma}W_{\Pi^{l-i}(\mathbf{X}),\gamma,p}^{(l-i)(\gamma-p)}(s,t)\\&+ (t-s)^{\frac{p(\gamma-\sigma)}{\gamma}}\left( W_{R^{i-1,l},\alpha-\sigma,\gamma,p,1}^{(l-i)(\gamma-p)}(s,t)\right)^{\frac{\sigma}{\gamma}} \left(W_{R^{i-1,l},\alpha-\sigma,\gamma,p,2}^{(l-i+1)(\gamma-p)}(s,t)\right)^{\frac{\gamma-\sigma}{\gamma}}.
				\end{split}
			\end{align*}
			\item[\textnormal{\textbf{ IV)}}]	\label{Item IV}  For $1\leq i<N$
			\begin{align*}	
				\tilde{W}_{\tilde{R}^{i,i+1},\alpha,\gamma,p}(s,t)&\lesssim \sup_{\tau \in [s,t]}\vert \xi_{\tau}^i \vert_{\alpha - i\gamma - \sigma}W_{\Pi^{1}(\mathbf{X}),\gamma,p}^{\gamma-p}(s,t)\\&+(t-s)^{\frac{p(\gamma-\sigma)}{\gamma}}\left( W_{R^{i-1,i+1},\alpha-\sigma,\gamma,p,1}^{\gamma-p}(s,t)\right)^{\frac{\sigma}{\gamma}} \left(W_{R^{i-1,i+1},\alpha-\sigma,\gamma,p,2}^{2(\gamma-p)}(s,t)\right)^{\frac{\gamma-\sigma}{\gamma}}.
			\end{align*}
		\end{itemize}
	\end{proposition}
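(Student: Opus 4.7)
My plan is to prove all four items by combining (i) a Chen-type expansion of $\xi^{i-1}$, (ii) the interpolation inequality~\eqref{interpolation:ineq} to bridge the gap between the more regular spaces in which the controls of $\boldsymbol{\xi}\in\mathscr{D}^{\gamma,p}_{\mathbf{X},\alpha-\sigma}$ live and the slightly less regular target spaces of Definition~\ref{controlled2}, and (iii) Hölder's inequality applied to the partition sums defining the control functions. The unifying observation is that every target space (namely $E_{\alpha-i\gamma}$ in Item I, $E_{\alpha-(l-1)\gamma}$ in Item II, $E_{\alpha-l\gamma}$ in Item III and $E_{\alpha-(i+1)\gamma}$ in Item IV) sits strictly between two spaces from the scale $(E_{\alpha-k\gamma-\sigma})_k$, with slack $\sigma$ on the less regular side and $\gamma-\sigma$ on the more regular side; this sandwich produces the universal interpolation weights $\sigma/\gamma$ and $(\gamma-\sigma)/\gamma$ that appear throughout the statement.

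For Item I I would decompose $\tilde{\xi}^i_\tau=\xi^{i-1}_s+\delta\xi^{i-1}_{s,\tau}$, bound $|\xi^{i-1}_s|_{E_{\alpha-i\gamma}}$ by the embedding $E_{\alpha-(i-1)\gamma-\sigma}\hookrightarrow E_{\alpha-i\gamma}$, and estimate the increment via the Chen expansion $\delta\xi^{i-1}_{s,\tau}=\xi^i_s\circ X_{s,\tau}+R^{i-1,i+1}_{s,\tau}$ together with the pointwise control bounds $|X_{s,\tau}|\le W_{\Pi^1(\mathbf{X}),\gamma,p}^{\gamma-p}(s,t)(t-s)^p$ and $|R^{i-1,i+1}_{s,\tau}|_{E_{\alpha-i\gamma-\sigma}}\le W_{R^{i-1,i+1},\alpha-\sigma,\gamma,p,1}^{\gamma-p}(s,t)(t-s)^p$. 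Interpolating this $E_{\alpha-i\gamma-\sigma}$-bound against the trivial estimate $2\sup_\tau|\xi^{i-1}_\tau|\le 2\|\boldsymbol{\xi}\|$ in the more regular $E_{\alpha-(i-1)\gamma-\sigma}$ via~\eqref{interpolation:ineq} yields mixed terms of the form $W_{\Pi^1(\mathbf{X}),\gamma,p}^{(\gamma-p)(\gamma-\sigma)/\gamma}(t-s)^{p(\gamma-\sigma)/\gamma}\|\boldsymbol{\xi}\|$ plus a pure $(t-s)^{p(\gamma-\sigma)/\gamma}\|\boldsymbol{\xi}\|$ residue. A final Young inequality with conjugate exponents $\gamma/(\gamma-p)$ and $\gamma/p$ splits the mixed factor into $W_{\Pi^1(\mathbf{X}),\gamma,p}^{\gamma-\sigma}+(t-s)^{\gamma-\sigma}$, reproducing exactly the three displayed summands.

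For Items II and IV the pointwise skeleton is identical: \eqref{interpolation:ineq} yields $|R^{i-1,l-1}_{\tau_k,\tau_{k+1}}|_{E_{\alpha-(l-1)\gamma}}\lesssim a_k^{\sigma/\gamma}b_k^{(\gamma-\sigma)/\gamma}$ with $a_k,b_k$ the norms controlled by the type-1 and type-2 controls of $R^{i-1,l-1}$, respectively. Raising to the reciprocal $1/q_1:=1/((l-i-1)(\gamma-p))$ and dividing by $\Delta_k^{p/(\gamma-p)}$ factors each term as $(a_k^{1/q_1}/\Delta_k^{p/(\gamma-p)})^{\sigma/\gamma}(b_k^{1/q_1}/\Delta_k^{p/(\gamma-p)})^{(\gamma-\sigma)/\gamma}$; summing over $k$ and applying Hölder at the conjugate exponents $\gamma/\sigma$ and $\gamma/(\gamma-\sigma)$ separates the sum into a product whose first factor returns $W_{R^{i-1,l-1},\alpha-\sigma,\gamma,p,1}^{\sigma/\gamma}(s,t)$ directly. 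The main obstacle is the second factor, which still contains $b_k^{1/q_1}$ while $W_{R^{i-1,l-1},\alpha-\sigma,\gamma,p,2}$ is defined through $b_k^{1/q_2}$ with $q_2:=(l-i)(\gamma-p)>q_1$. I resolve this exponent mismatch by extracting from the type-2 control the pointwise consequence $b_k\le W_{R^{i-1,l-1},\alpha-\sigma,\gamma,p,2}(\tau_k,\tau_{k+1})^{q_2}\Delta_k^{p(l-i)}$, rewriting $b_k^{1/q_1}\le W_{\cdot,2}^{(l-i)/(l-i-1)}(\tau_k,\tau_{k+1})\Delta_k^{p/q_1}$, and bounding the residual partition sum $\sum_k\Delta_k^{p/q_1}\le(t-s)^{p/q_1}$; the inequality $p/q_1\ge 1$ required for the last step is guaranteed by Assumption~\ref{ADssdfg} throughout the relevant range $2\le l-i\le N$. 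Item III is proved by exactly the same mechanism after first invoking the algebraic identity $R^{i-1,l-1}_{s,t}=\xi^{l-1}_s\circ(\Pi^{l-i}(\mathbf{X}))_{s,t}+R^{i-1,l}_{s,t}$, obtained by comparing the Chen expansions of $\delta\xi^{i-1}$ at levels $l-1$ and $l$: the first summand yields the $\sup|\xi^{l-1}|\cdot W_{\Pi^{l-i}(\mathbf{X})}^{(l-i)(\gamma-p)}$ contribution directly via the embedding $E_{\alpha-(l-1)\gamma-\sigma}\hookrightarrow E_{\alpha-l\gamma}$, while the second is controlled by the above interpolation/Hölder scheme applied to $R^{i-1,l}$ with indices shifted by one. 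Item IV proceeds analogously after splitting $\tilde{R}^{i,i+1}_{s,t}=\xi^i_s\circ X_{s,t}+R^{i-1,i+1}_{s,t}$.
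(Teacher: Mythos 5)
Your proposal is correct and follows essentially the same route as the paper: the same Chen-type identities ($\delta\xi^{i-1}=\xi^{i}\circ\Pi^{1}(\mathbf{X})+R^{i-1,i+1}$, $R^{i-1,l-1}=R^{i-1,l}+\xi^{l-1}\circ\Pi^{l-i}(\mathbf{X})$), the same interpolation with weights $\sigma/\gamma$ and $(\gamma-\sigma)/\gamma$, Hölder on the partition sums at exponents $\gamma/\sigma$, $\gamma/(\gamma-\sigma)$, and the same Young-inequality split in Item I. The only deviation is your treatment of the $q_1$ versus $q_2$ mismatch, where you pull $W_{R^{i-1,l-1},\alpha-\sigma,\gamma,p,2}(\tau_k,\tau_{k+1})\leq W_{R^{i-1,l-1},\alpha-\sigma,\gamma,p,2}(s,t)$ out by monotonicity and use $\sum_k\Delta_k^{p/q_1}\leq(t-s)^{p/q_1}$ (legitimate, since $p>(l-i-1)(\gamma-p)$ under Assumption~\ref{ADssdfg}), whereas the paper instead bounds $\Delta_k^{p/q_1}\leq(t-s)^{p/q_1}$ uniformly and applies Lemma~\ref{INM} plus superadditivity; both give the identical bound, and your displayed intermediate inequality only misstates the power of $\Delta_k$ (it should be $\Delta_k^{p(l-i)/q_1}$ before dividing by $\Delta_k^{p/(\gamma-p)}$), which does not affect the argument.
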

	\begin{proof}
		Throughout the proof, we suppress for notational simplicity the tensor component of $\mathbf{\xi}$ and only write the index of the interpolation space in order to keep track of the spatial regularity.\\ 
		We begin with \hyperref[Item I]{\textbf{I)}}.
		Note that
		\begin{align}\label{BNBZV}
			\delta \xi^{i-1}_{s,t}=\xi^{i}_s\circ (\Pi^{1}(\mathbf{X}))_{s,t}+R^{i-1,i+1}_{s,t}.
		\end{align}
		Therefore
		\begin{align*}
			\left\vert\delta{\xi}^{i-1}_{s,t}\right\vert_{\alpha-i\gamma-\sigma}\lesssim (t-s)^pW_{\Pi^{1}(\mathbf{X}),\gamma,p}^{\gamma-p}(s,t)\left\vert\xi^i_{s}\right\vert_{\alpha-i\gamma-\sigma}+(t-s)^{p}W_{R^{i-1,i+1},\alpha-\sigma,\gamma,p,1}^{\gamma-p}(s,t).
		\end{align*}
		So, by the interpolation inequality~\eqref{interpolation:ineq} for $\tau\in [s,t]$ we have
		\begin{align*}
			\vert\tilde{\xi}^{i}_{\tau}\vert_{\alpha-i\gamma}&\lesssim \vert\xi^{i-1}_{s}\vert_{\alpha-i\gamma+\gamma-\sigma}+\left\vert\delta{\xi}^{i-1}_{s,\tau}\right\vert_{\alpha-i\gamma-\sigma}^{\frac{\gamma-\sigma}{\gamma}}\left\vert\delta{\xi}^{i-1}_{s,\tau}\right\vert_{\alpha-i\gamma+\gamma-\sigma}^{\frac{\sigma}{\gamma}}\\&\lesssim \vert\xi^{i-1}_s\vert_{\alpha-i\gamma+\gamma-\sigma}+\left((t-s)^{\frac{p(\gamma-\sigma)}{\gamma}}W_{\Pi^{1}(\mathbf{X}),\gamma,p}^{\frac{(\gamma-p)(\gamma-\sigma)}{\gamma}}(s,t)+(t-s)^{\frac{p(\gamma-\sigma)}{\gamma}}\right)\Vert\boldsymbol{\xi}\Vert_{\mathscr{D}^{\gamma, p}_{\mathbf{X},\alpha-\sigma}\left([0,T]\right)}\\&\lesssim \vert\xi^{i-1}_s\vert_{\alpha-i\gamma+\gamma-\sigma}+\left(W_{\Pi^{1}(\mathbf{X}),\gamma,p}^{\gamma-\sigma}(s,t)+(t-s)^{\gamma-\sigma}+(t-s)^{\frac{p(\gamma-\sigma)}{\gamma}}\right)\Vert\boldsymbol{\xi}\Vert_{\mathscr{D}^{\gamma, p}_{\mathbf{X},\alpha-\sigma}\left([0,T]\right)}.
		\end{align*}
		We now show	\hyperref[Item I]{\textbf{II)}}. First, note that for every \( \tau_1 < \tau_2 \), we have  
		\[
		\tilde{R}^{i,l}_{\tau_1,\tau_2} = R^{i-1,l-1}_{\tau_1,\tau_2}
		\]  
		as specified in Definition \ref{controlled2}. Since we want to estimate \( \tilde{W}_{\tilde{R}^{i,l},\alpha,\gamma,p,1} \), our strategy is first to estimate \( \tilde{R}^{i,l}_{\tau_1,\tau_2} \) on an arbitrary interval $[\tau_1,\tau_2]$ and then extend this result to a partition of $[s,t]$.
		Again from the interpolation inequality~\eqref{interpolation:ineq}, we have 
		\begin{align*}
			\left|R^{i-1,l-1}_{\tau_{1},\tau_{2}}\right|_{\alpha-l\gamma+\gamma}&\lesssim \left|R^{i-1,l-1}_{\tau_{1},\tau_{2}}\right|_{\alpha-l\gamma+2\gamma-\sigma}^{\frac{\sigma}{\gamma}}\left|R^{i-1,l-1}_{\tau_{1},\tau_{2}}\right|_{\alpha-l\gamma+\gamma-\sigma}^{\frac{\gamma-\sigma}{\gamma}}.
		\end{align*}
		Therefore we conclude
		\begin{align}\label{SADsdsg}
			\frac{\left|R^{i-1,l-1}_{\tau_{1},\tau_{2}}\right|_{\alpha-l\gamma+\gamma}^{\frac{1}{(l-i-1)(\gamma-p)}}}{(\tau_{2}-\tau_1)^{\frac{p}{\gamma-p}}} \lesssim \left(\frac{\left|R^{i-1,l-1}_{\tau_{1},\tau_{2}}\right|_{\alpha-l\gamma+2\gamma-\sigma}^{\frac{1}{(l-i-1)(\gamma-p)}}}{(\tau_{2}-\tau_1)^{\frac{p}{\gamma-p}}}\right)^{\frac{\sigma}{\gamma}}\left(\frac{\left|R^{i-1,l-1}_{\tau_{1},\tau_{2}}\right|_{\alpha-l\gamma+\gamma-\sigma}^{\frac{1}{(l-i-1)(\gamma-p)}}}{(\tau_{2}-\tau_1)^{\frac{p}{\gamma-p}}}\right)^{\frac{\gamma-\sigma}{\gamma}}.
		\end{align}
		Consequently, for every arbitrary partition \( \pi = \lbrace s = \tau_0 < \tau_1 < \ldots < \tau_m = t \rbrace \), we derive using the interpolation inequality~\eqref{interpolation:ineq} {and \eqref{SADsdsg}) that}
		\begin{align}\label{AXZ}
			\begin{split}
				\sum_{k}\frac{\left|R^{i-1,l-1}_{\tau_{k},\tau_{k+1}}\right|_{\alpha-l\gamma+\gamma}^{\frac{1}{(l-i-1)(\gamma-p)}}}{(\tau_{k+1}-\tau_k)^{\frac{p}{\gamma-p}}}&\lesssim\sum_{k} \left(\frac{\left|R^{i-1,l-1}_{\tau_{k},\tau_{k+1}}\right|_{\alpha-l\gamma+2\gamma-\sigma}^{\frac{1}{(l-i-1)(\gamma-p)}}}{(\tau_{k+1}-\tau_k)^{\frac{p}{\gamma-p}}}\right)^{\frac{\sigma}{\gamma}}\left(\frac{\left|R^{i-1,l-1}_{\tau_{k},\tau_{k+1}}\right|_{\alpha-l\gamma+\gamma-\sigma}^{\frac{1}{(l-i-1)(\gamma-p)}}}{(\tau_{k+1}-\tau_k)^{\frac{p}{\gamma-p}}}\right)^{\frac{\gamma-\sigma}{\gamma}}\\&\leq \left(\underbrace{\sum_{k} \left(\frac{\left|R^{i-1,l-1}_{\tau_{k},\tau_{k+1}}\right|_{\alpha-l\gamma+2\gamma-\sigma}^{\frac{1}{(l-i-1)(\gamma-p)}}}{(\tau_{k+1}-\tau_k)^{\frac{p}{\gamma-p}}}\right)}_{\leq W_{R^{i-1,l-1},\alpha-\sigma,\gamma,p,1}(s,t) }\right)^{\frac{\sigma}{\gamma}}\left(\sum_{k}\left(\frac{\left|R^{i-1,l-1}_{\tau_{k},\tau_{k+1}}\right|_{\alpha-l\gamma+\gamma-\sigma}^{\frac{1}{(l-i-1)(\gamma-p)}}}{(\tau_{k+1}-\tau_k)^{\frac{p}{\gamma-p}}}\right)\right)^{\frac{\gamma-\sigma}{\gamma}},
			\end{split}
		\end{align}
		where, in the last step, we used again Hölder's inequality. Now, we estimate the last term in \eqref{AXZ} as follows
		\begin{align}\label{DRA}
			\begin{split}
				&\sum_{k}\left(\frac{\left|R^{i-1,l-1}_{\tau_{k},\tau_{k+1}}\right|_{\alpha-l\gamma+\gamma-\sigma}^{\frac{1}{(l-i-1)(\gamma-p)}}}{(\tau_{k+1}-\tau_k)^{\frac{p}{\gamma-p}}}\right)= \sum_{k}(\tau_{k+1}-\tau_k)^{\frac{p}{(\gamma-p)(l-i-1)}}\left(\frac{\left|R^{i-1,l-1}_{\tau_{k},\tau_{k+1}}\right|_{\alpha-l\gamma+\gamma-\sigma}^{\frac{1}{(l-i)(\gamma-p)}}}{(\tau_{k+1}-\tau_k)^{\frac{p}{\gamma-p}}}\right)^{\frac{l-i}{l-i-1}}\\&\leq (t-s)^{\frac{p}{(\gamma-p)(l-i-1)}}\sum_{k}\left(\frac{\left|R^{i-1,l-1}_{\tau_{k},\tau_{k+1}}\right|_{\alpha-l\gamma+\gamma-\sigma}^{\frac{1}{(l-i)(\gamma-p)}}}{(\tau_{k+1}-\tau_k)^{\frac{p}{\gamma-p}}}\right)^{\frac{l-i}{l-i-1}}\\&\leq (t-s)^{\frac{p}{(\gamma-p)(l-i-1)}}\left(\sum_{k}\frac{\left|R^{i-1,l-1}_{\tau_{k},\tau_{k+1}}\right|_{\alpha-l\gamma+\gamma-\sigma}^{\frac{1}{(l-i)(\gamma-p)}}}{(\tau_{k+1}-\tau_k)^{\frac{p}{\gamma-p}}}\right)^{\frac{l-i}{l-i-1}}\leq (t-s)^{\frac{p}{(\gamma-p)(l-i-1)}}W_{R^{i-1,l-1},\alpha-\sigma,\gamma,p,2}^{\frac{l-i}{l-i-1}}(s,t),
			\end{split}
		\end{align}
		{where in the step before the last, we used Lemma \ref{INM} with $p_1=\frac{l-i}{l-i-1}$ and $p_2=1$.} Consequently, from \eqref{AXZ} and \eqref{DRA}
		\begin{align*}
			\tilde{W}_{\tilde{R}^{i,l},\alpha,\gamma,p,1}^{(l-i-1)(\gamma-p)}(s,t)\lesssim (t-s)^{\frac{p(\gamma-\sigma)}{\gamma}}\left( W_{R^{i-1,l-1},\alpha-\sigma,\gamma,p,1}^{(l-i-1)(\gamma-p)}(s,t)\right)^{\frac{\sigma}{\gamma}} \left(W_{R^{i-1,l-1},\alpha-\sigma,\gamma,p,2}^{(l-i)(\gamma-p)}(s,t)\right)^{\frac{\gamma-\sigma}{\gamma}},
		\end{align*}
		which proves \hyperref[Item II]{\textbf{II)}}.\newline
		We now focus on		\hyperref[Item III]{\textbf{III)}}.
		To estimate the  $W_{\tilde{R}^{i,l},\alpha,\gamma,p,2}^{l-i-1}(s,t)$, we first observe that
		\begin{align}\label{YBVASs}
			R^{i-1,l-1}_{\tau_1,\tau_2} = R^{i-1,l}_{\tau_1,\tau_2} + \xi_{\tau_1}^{l-1} \circ \big(\Pi^{l-i}(\mathbf{X})\big)_{\tau_1,\tau_2}.
		\end{align}
		Similar to the previous case using the interpolation inequality~\eqref{interpolation:ineq}
		\begin{align}\label{HBNMMZ}
			\left|R^{i-1,l}_{\tau_{1},\tau_{2}}\right|_{\alpha-l\gamma}&\lesssim \left|R^{i-1,l}_{\tau_1,\tau_2}\right|_{\alpha-l\gamma+\gamma-\sigma}^{\frac{\sigma}{\gamma}}\left|R^{i-1,l}_{\tau_{1},\tau_{2}}\right|_{\alpha-l\gamma-\sigma}^{\frac{\gamma-\sigma}{\gamma}}.
		\end{align}
		Therefore,
		\begin{align*}
			\frac{\left|R^{i-1,l}_{\tau_{1},\tau_{2}}\right|_{\alpha-l\gamma}^{\frac{1}{(l-i)(\gamma-p)}}}{(\tau_{2}-\tau_1)^{\frac{p}{\gamma-p}}} \lesssim \left(\frac{\left|R^{i-1,l}_{\tau_{1},\tau_{2}}\right|_{\alpha-l\gamma+\gamma-\sigma}^{\frac{1}{(l-i)(\gamma-p)}}}{(\tau_{2}-\tau_1)^{\frac{p}{\gamma-p}}}\right)^{\frac{\sigma}{\gamma}}\left(\frac{\left|R^{i-1,l}_{\tau_{1},\tau_{2}}\right|_{\alpha-l\gamma-\sigma}^{\frac{1}{(l-i)(\gamma-p)}}}{(\tau_{2}-\tau_1)^{\frac{p}{\gamma-p}}}\right)^{\frac{\gamma-\sigma}{\gamma}}.
		\end{align*}
		We then argue similarly as in \eqref{AXZ} and \eqref{DRA} to conclude
		\begin{align}\label{BNMio}
			\begin{split}
				&\underbrace{\left(	\sup_{\pi \in P[s,t]} \left\{\sum_{k} \frac{\left|R^{i-1,l}_{\tau_{k},\tau_{k+1}}\right|_{\alpha-l\gamma}^{\frac{1}{(l-i)(\gamma-p)}}}{(\tau_{k+1}-\tau_k)^{\frac{p}{\gamma-p}}} \right\}\right)^{(l-i)(\gamma-p)}}_{\tilde{\tilde{W}}_{R^{i-1,l},\alpha,\gamma,p}^{(l-i)(\gamma-p)}(s,t)}\\&\lesssim (t-s)^{\frac{p(\gamma-\sigma)}{\gamma}}\left( W_{R^{i-1,l},\alpha-\sigma,\gamma,p,1}^{(l-i)(\gamma-p)}(s,t)\right)^{\frac{\sigma}{\gamma}} \left(W_{R^{i-1,l},\alpha-\sigma,\gamma,p,2}^{(l-i+1)(\gamma-p)}(s,t)\right)^{\frac{\gamma-\sigma}{\gamma}}.
			\end{split}
		\end{align}
		Consequently, from \eqref{YBVASs}, the Minkowski inequality and \eqref{BNMio} we infer that
		\begin{align*}
			\tilde{W}_{\tilde{R}^{i,l},\alpha,\gamma,p,2}^{(l-i)(\gamma-p)}(s,t)&\leq \tilde{\tilde{W}}_{R^{i-1,l},\alpha,\gamma,p}^{(l-i)(\gamma-p)}(s,t)+\sup_{\tau\in [s,t]}\vert\xi^{l-1}_{\tau}\vert_{\alpha-l\gamma}W_{\Pi^{l-i}(\mathbf{X}),\gamma,p}^{(l-i)(\gamma-p)}(s,t)\\&\lesssim (t-s)^{\frac{p(\gamma-\sigma)}{\gamma}}\left( W_{R^{i-1,l},\alpha-\sigma,\gamma,p,1}^{(l-i)(\gamma-p)}(s,t)\right)^{\frac{\sigma}{\gamma}} \left(W_{R^{i-1,l},\gamma,\alpha-\sigma,p,2}^{(l-i+1)(\gamma-p)}(s,t)\right)^{\frac{\gamma-\sigma}{\gamma}}\\&+\sup_{\tau\in [s,t]}\vert\xi^{l-1}_{\tau}\vert_{\alpha-(l-1)\gamma-\sigma}W_{\Pi^{l-i}(\mathbf{X}),\gamma,p}^{(l-i)(\gamma-p)}(s,t).
		\end{align*}
		We finally verify \hyperref[Item IV]{\textbf{IV)}}. This item is very similar to the previous case. Recalling \eqref{BNMio} and the definition of \( \tilde{\tilde{W}}_{R^{i-1,i+1}} \) therein, we obtain  
         \begin{align*}  	
			&\left(\sum_{k} \frac{\left|R^{i-1,i+1}_{\tau_{k},\tau_{k+1}}\right|_{\alpha-i\gamma-\gamma}^{\frac{1}{\gamma-p}}}{(\tau_{k+1}-\tau_k)^{\frac{p}{\gamma-p}}}\right)^{\gamma-p}\\ &\quad\leq  (t-s)^{\frac{p(\gamma-\sigma)}{\gamma}}\left( W_{R^{i-1,i+1},\alpha-\sigma,\gamma,p,1}^{\gamma-p}(s,t)\right)^{\frac{\sigma}{\gamma}} \left(W_{R^{i-1,i+1},\alpha-\sigma,\gamma,p,2}^{2(\gamma-p)}(s,t)\right)^{\frac{\gamma-\sigma}{\gamma}}.
        \end{align*}
        Then, from \eqref{BNBZV}, the inequality  
\[
\sup_{\tau \in [s,t]} \vert \xi_{s}^i \vert_{\alpha - i\gamma - \gamma} \lesssim \sup_{\tau \in [s,t]} \vert \xi_{s}^i \vert_{\alpha - i\gamma - \sigma},
\]
and the Minkowski inequality, we deduce  
		\begin{align*}
			\tilde{W}_{R^{i,i+1},\alpha,\gamma,p}(s,t)&\lesssim \sup_{\tau \in [s,t]}\vert \xi_{s}^i \vert_{\alpha - i\gamma - \sigma}W_{\Pi^{1}(\mathbf{X}),\gamma,p}^{\gamma-p}(s,t)\\&+(t-s)^{\frac{p(\gamma-\sigma)}{\gamma}}\left( W_{R^{i-1,i+1},\alpha-\sigma,\gamma,p,1}^{\gamma-p}(s,t)\right)^{\frac{\sigma}{\gamma}} \left(W_{R^{i-1,i+1},\alpha-\sigma,\gamma,p,2}^{2(\gamma-p)}(s,t)\right)^{\frac{\gamma-\sigma}{\gamma}}.
		\end{align*}
		This proves the last statement.
	\end{proof}
	Now we focus on the rough integral. 
	\begin{proposition}\label{UJMMA}
		Let $\boldsymbol{\xi} \in \mathscr{D}^{\gamma, p}_{\mathbf{X},\alpha-\sigma}\left([0,T]\right)$ and assume the setting of Theorem \ref{shdccsa}. We further let $[s,t]\subseteq [0,T]$ and define for \( a \in [s,t] \)  
		\begin{align*}
			\tilde{\xi}^{0}_{a} := \int_{s}^{a} U_{a,v} \xi_{v} \circ \mathrm{d}\mathbf{X}_{v}.
		\end{align*}
		In addition, for \( 1 \leq j < N \),  we set  
		\begin{align*}
			\tilde{\xi}^{j}_{a} := \xi^{j-1}_{a}.
		\end{align*}
		Furthermore, for \( [a,b] \subseteq [s,t] \) and \( 1 < l \leq N \)  
		\begin{align*}
			\tilde{R}^{0,l}_{a,b} := \delta\tilde{\xi}^{0}_{a,b} - \sum_{0 < j < l} \tilde{\xi}^{j}_{a}\circ(\Pi^{j}(\mathbf{X}))_{a,b}.
		\end{align*}
		Similar to Definition \ref{controlled2}, we define
		{\small
			\begin{align*}
				\tilde{W}_{\tilde{R}^{0,l},\alpha,\gamma,p,1}(s,t):=&\sup_{\pi \in P[s,t]} \left\{\sum_{k} \frac{\left|\tilde{R}^{0,l}_{\tau_{k},\tau_{k+1}}\right|_{E_{\alpha-(l-1)\gamma}}^{\frac{1}{(l-1)(\gamma-p)}}}{(\tau_{k+1}-\tau_k)^{\frac{p}{\gamma-p}}} \right\},\ 
				\tilde{W}_{\tilde{R}^{0,l},\alpha,\gamma,p,2}(s,t):=	\sup_{\pi \in P[s,t]} \left\{\sum_{k} \frac{\left|\tilde{R}^{0,l}_{\tau_{k},\tau_{k+1}}\right|_{E_{\alpha-l\gamma}}^{\frac{1}{l(\gamma-p)}}}{(\tau_{k+1}-\tau_k)^{\frac{p}{\gamma-p}}} \right\},
				\\& \tilde{W}_{\tilde{R}^{0,1},\alpha,\gamma,p}(s,t):=	\sup_{\pi \in P[s,t]} \left\{\sum_{k} \frac{\left|  \delta \tilde\xi^0_{\tau_{k},\tau_{k+1}}\right|_{E_{\alpha-\gamma}}^{\frac{1}{\gamma-p}}}{(\tau_{k+1}-\tau_k)^{\frac{p}{\gamma-p}}} \right\}.
		\end{align*}}
		Then we have
		\begin{align}\label{52}
			\begin{split}
				&\max\left\lbrace \tilde{W}_{\tilde{R}^{0,l},\alpha,\gamma,p,1}(s,t),	\tilde{W}_{\tilde{R}^{0,l},\alpha,\gamma,p,2}(s,t),	\tilde{W}_{\tilde{R}^{0,1},\alpha,\gamma,p}(s,t)\right\rbrace\\&\lesssim \sum_{0\leq j<N}\left( W_{\Pi^{j+1}(\mathbf{X}),\gamma,p}^{(j+1)(\gamma-p)}(s,t) W_{R^{j,N},\alpha-\sigma,\gamma,p,2}^{(N-j)(\gamma-p)}(s,t)+W_{\Pi^{j+1}(\mathbf{X}),\gamma,p}^{(j+1)(\gamma-p)}(s,t)\sup_{\tau\in [s,t]}\left\vert\xi^{j}_{\tau}\right\vert_{\alpha-j\gamma-\sigma}\right).
			\end{split}
		\end{align}
	\end{proposition}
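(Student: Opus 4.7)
The starting point is the algebraic identity
\[
\delta \tilde{\xi}^0_{a,b} = \int_a^b U_{b,v}\xi_v\circ\mathrm{d}\mathbf{X}_v + (U_{b,a}-\mathrm{Id})\tilde{\xi}^0_a,\qquad s\leq a\leq b\leq t,
\]
which follows from the flow property $U_{b,v}=U_{b,a}U_{a,v}$ and the linearity of the rough integral. For $l\geq 2$, substituting the Taylor-type expansion \eqref{sodais} of Theorem~\ref{shdccsa} into the integral on $[a,b]$, using $\tilde\xi^j_a=\xi^{j-1}_a$ for $1\leq j<N$, and re-indexing the subtracted sum in the definition of $\tilde R^{0,l}$ yields
\begin{align*}
\tilde{R}^{0,l}_{a,b} &= \mathcal{E}^l_{a,b} + (U_{b,a}-\mathrm{Id})\tilde\xi^0_a + \sum_{0\leq k<l-1}(U_{b,a}-\mathrm{Id})\xi^k_a\circ(\Pi^{k+1}(\mathbf{X}))_{a,b} \\
&\quad + \sum_{l-1\leq j<N}U_{b,a}\xi^j_a\circ(\Pi^{j+1}(\mathbf{X}))_{a,b},
\end{align*}
where $\mathcal{E}^l_{a,b}$ denotes the sewing-lemma error. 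The case $l=1$, for which $\tilde R^{0,1}_{a,b}=\delta\tilde\xi^0_{a,b}$, requires no such rearrangement and is treated by the same local expansion directly.

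The plan is to produce, for each of the four contributions, a bound of the form
\[
|\tilde R^{0,l}_{a,b}|_{E_{\alpha-l\gamma}} \lesssim (b-a)^{lp}\,G(a,b),
\]
where $G(a,b)$ is a sum of products of control-function powers on $[a,b]$ together with the uniform norms $\sup_{\tau\in[s,t]}|\xi^j_\tau|_{\alpha-j\gamma-\sigma}$. For $\mathcal{E}^l_{a,b}$ this is exactly \eqref{sodais}. For $(U_{b,a}-\mathrm{Id})\tilde\xi^0_a$ one combines the parabolic estimate $|(U_{b,a}-\mathrm{Id})x|_{E_{\alpha-l\gamma}}\lesssim (b-a)^{l\gamma}|x|_{E_\alpha}$ with the uniform $E_\alpha$-bound on $\tilde\xi^0_a$ furnished by Corollary~\ref{saswf65}, whose right-hand side is already of the shape demanded by \eqref{52}. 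For $0\leq k<l-1$ the interpolation estimate $|(U_{b,a}-\mathrm{Id})\xi^k_a|_{E_{\alpha-l\gamma}}\lesssim (b-a)^{(l-k)\gamma-\sigma}|\xi^k_a|_{E_{\alpha-k\gamma-\sigma}}$ applies thanks to $\sigma<\gamma$, and the composed $(b-a)$-exponent $(l-k)\gamma-\sigma+(k+1)p$ exceeds $lp$ by virtue of $\sigma<p$ from Assumption~\ref{ADssdfg}. For the leftover $j\geq l-1$ one uses either the embedding $E_{\alpha-j\gamma-\sigma}\hookrightarrow E_{\alpha-l\gamma}$ when $j\gamma+\sigma\geq l\gamma$, or the smoothing estimate $|U_{b,a}x|_{E_{\alpha-l\gamma}}\lesssim (b-a)^{l\gamma-j\gamma-\sigma}|x|_{E_{\alpha-j\gamma-\sigma}}$ otherwise; in both regimes the composed $(b-a)$-exponent remains at least $lp$ thanks to Assumption~\ref{ADssdfg}.

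Equipped with the generic bound, one raises to the power $1/(l(\gamma-p))$ --- using Lemma~\ref{INM} to pull the sum over $j$ outside --- divides by $(\tau_{k+1}-\tau_k)^{p/(\gamma-p)}$, so that the $(b-a)^{lp}$ factor cancels exactly, and sums over an arbitrary partition $\pi\in P[s,t]$. Applying H\"older's inequality along the partition in the spirit of \eqref{SHHSBND}--\eqref{HNBZS}, the subadditivity of $W_{\Pi^{j+1}(\mathbf{X}),\gamma,p}$ and $W_{R^{j,N},\alpha-\sigma,\gamma,p,2}$ collapses each partition sum into a product of their values on $[s,t]$ raised to the appropriate powers, while the surplus $(b-a)$-powers --- positive thanks to Assumption~\ref{ADssdfg} --- are absorbed into $T^{\cdot}$. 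The same scheme, with the target norm replaced by $E_{\alpha-(l-1)\gamma}$ respectively $E_{\alpha-\gamma}$ and the regularization exponents adjusted accordingly, yields the bounds on $\tilde W_{\tilde R^{0,l},\alpha,\gamma,p,1}$ and $\tilde W_{\tilde R^{0,1},\alpha,\gamma,p}$. The principal technical hurdle is the careful bookkeeping of all these exponents, so that after the $1/(l(\gamma-p))$-th root the resulting control-function powers can be re-expressed in the form $(j+1)(\gamma-p)$ and $(N-j)(\gamma-p)$ asserted in \eqref{52}, the residual constants being absorbed into the implicit $\lesssim$.
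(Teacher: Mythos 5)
Your proposal is correct and follows essentially the same route as the paper: the identical decomposition of $\tilde R^{0,l}_{a,b}$ into the sewing error controlled by \eqref{sodais}, the term $(U_{b,a}-\mathrm{Id})\tilde\xi^{0}_{a}$ bounded via Corollary~\ref{saswf65} together with \eqref{regularity}, the shifted terms $(U_{b,a}-\mathrm{Id})\xi^{k}_{a}\circ(\Pi^{k+1}(\mathbf{X}))_{a,b}$ for $k<l-1$ and the leftover terms $U_{b,a}\xi^{j}_{a}\circ(\Pi^{j+1}(\mathbf{X}))_{a,b}$ for $j\geq l-1$, followed by the same raise-to-the-power, divide by $(\tau_{k+1}-\tau_k)^{p/(\gamma-p)}$, H\"older-along-the-partition and Lemma~\ref{INM} machinery that collapses everything into the control functions on $[s,t]$. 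One small slip: your assignment of the two regimes for $j\geq l-1$ is reversed --- the embedding $E_{\alpha-j\gamma-\sigma}\hookrightarrow E_{\alpha-l\gamma}$ holds when $j\gamma+\sigma\leq l\gamma$, while the smoothing estimate with exponent $(b-a)^{l\gamma-j\gamma-\sigma}$ (then negative, compensated by the $(j+1)p$ from the iterated integral) is what one uses when $j\gamma+\sigma>l\gamma$ --- but since the estimates you actually invoke and the resulting exponent count (total power at least $lp$, surplus positive by Assumption~\ref{ADssdfg}) are the correct ones, this is a harmless mislabeling rather than a gap.
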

	\begin{proof}
		Throughout the proof, we follow the same convention as in Proposition \ref{ATSY} for denoting norms. Specifically, we omit the tensor component and only indicate the spatial regularity of the Banach scale. We assume that \([a, b] \subseteq [s,t]\), and let \(\pi = \{s = \tau_0 < \tau_1 < \cdots < \tau_m = t\}\) be an arbitrary partition of \([s, t]\). Moreover, recalling from Assumption \ref{ADssdfg} we have that for every \(0 \leq j \leq N\) 
		\begin{align*}  
			P_{j} = (j+1)p - j\gamma - \sigma > 0.  
		\end{align*}
		For every $1\leq l\leq N$ we have 
		\begin{align*}
			&\delta \tilde{\xi}^{0}_{a, b}=\int_{s}^{b}U_{b,v}\xi_{v}\circ\mathrm{d}\mathbf{X}_{v}-\int_{s}^{a}U_{a,v}\xi_{v}\circ\mathrm{d}\mathbf{X}_{v}\\&\quad=\int_{s}^{a}(U_{b,v}-U_{a,v})\xi_{v} \circ \mathrm{d}\mathbf{X}_{v}+\int_{a}^{b}U_{b,v}\xi_{v}\circ\mathrm{d}\mathbf{X}_{v}=(U_{b,a}-I)\int_{s}^{a}U_{a,v}\xi_{v}\circ\mathrm{d}\mathbf{X}_{v}+\int_{a}^{b}U_{b,v}\xi_{v}\circ\mathrm{d}\mathbf{X}_{v}\\&\myquad[2]=\underbrace{(U_{b,a}-I)\int_{s}^{a}U_{a,v}\xi_{v}\circ\mathrm{d}\mathbf{X}_{v}}_{C_{l}(u,b)}+\underbrace{\left(\int_{a}^{b}U_{b,v}\xi_{v}\circ\mathrm{d}\mathbf{X}_{v}-\sum_{0\leq j<N}U_{b,a}\xi^{j}_{a}\circ(\Pi^{j+1}(\mathbf{X}))_{a,b}\right)}_{D_{l}(a,b)}\\&\myquad[3]+\underbrace{\sum_{0\leq j<l-1}\left((U_{b,a}-I)\xi^{j}_{a}\circ(\Pi^{j+1}(\mathbf{X}))_{a,b}\right)}_{F_{l}(a,b)}+U_{b,a}\xi^{l-1}_{a}\circ(\Pi^{l}(\mathbf{X}))_{a,b}\\&\myquad[4]+\underbrace{\sum_{l-1< j<N}U_{b,a}\xi^{j}_{a}\circ(\Pi^{j+1}(\mathbf{X}))_{a,b}}_{H_{l}(a,b)}+\underbrace{\sum_{0\leq j<l-1}\xi^{j}_{a}\circ(\Pi^{j+1}(\mathbf{X}))_{a,b}}_{=\sum_{0<j<l}\tilde{\xi}^{j}_{a}(\Pi^{j}\circ(\mathbf{X}))_{a,b}}.
		\end{align*}
		Consequently, for $1\leq l\leq N$ and $r\in\lbrace 0,1\rbrace$
		\begin{align}\label{A11}
			\begin{split}
				\vert \tilde{R}^{0,l}_{u,b}\vert_{\alpha-(l-r)\gamma}&\leq \vert C_{l}(u,b)\vert_{\alpha-(l-r)\gamma}+\vert D_{l}(u,b)\vert_{\alpha-(l-r)\gamma}\\&+\vert U_{b,a}\xi^{l-1}_{a}\circ(\Pi^{l}(\mathbf{X}))_{a,b}\vert_{\alpha-(l-r)\gamma}+\vert F_{l}(u,b)t\vert_{\alpha-(l-r)\gamma}+\vert H_{l}(u,b)\vert_{\alpha-(l-r)\gamma}.
			\end{split}
		\end{align}
		We now estimate each of these terms separately.\newline 
			\textbf{A)} To estimate \(F_{l}\) in the \(E_{\alpha-(l-r)\gamma}\)-norm, first recall that \(\xi^{j} \in L((\mathbb{R}^d)^{\otimes j+1}, E_{\alpha - j\gamma - \sigma})\). Thus, for \(0 \leq j < l - 1\) and \(r \in \{0, 1\}\) we have
			\begin{align*}
				\alpha-j\gamma-\sigma> \alpha-(l-r)\gamma.
			\end{align*}
			Thus, it follows from \eqref{regularity} that  
			\begin{align*}  
				\left\vert (U_{b,a} - I)\xi^{j}_{a} \right\vert_{\alpha - (l-r)\gamma} \lesssim (b-a)^{(l-j-r)\gamma - \sigma} \vert \xi^{j}_{a} \vert_{\alpha - j\gamma - \sigma}.  
			\end{align*}  
			Consequently, from the definition of \(F_{l}\), we obtain
			\begin{align*}
				\left\vert F_{l}(a,b) \right \vert_{\alpha-(l-r)\gamma}&\lesssim\sum_{0\leq j<l-1} (b-a)^{(l-j-r)\gamma-\sigma}(b-a)^{(j+1)p}W_{\Pi^{j+1}(\mathbf{X}),\gamma,p}^{(j+1)(\gamma-p)}(a,b)\vert\xi^{j}_{a}\vert_{\alpha-j\gamma-\sigma}\\& =(b-a)^{(l-r)p}\sum_{0\leq j<l-1}(b-a)^{(l-r)(\gamma-p)+P_{j}}W_{\Pi^{j+1}(\mathbf{X}),\gamma,p}^{(j+1)(\gamma-p)}(a,b)\vert\xi^{j}_{a}\vert_{\alpha-j\gamma-\sigma}.
			\end{align*}
			{This, together with Lemma \ref{INM}, yields}
			\begin{align*}
				\frac{\left\vert F_{l}(a,b) \right \vert_{\alpha-(l-r)\gamma}^{\frac{1}{(l-r)(\gamma-p)}}}{(b-a)^{\frac{p}{\gamma-p}}}\lesssim \sum_{0\leq j<l-1}(b-a)^{1+\frac{P_j}{(l-r)(\gamma-p)}}W_{\Pi^{j+1}(\mathbf{X}),\gamma,p}^{\frac{j+1}{l-r}}(a,b)\vert\xi^{j}_{a}\vert_{\alpha-j\gamma-\sigma}^{\frac{1}{(l-r)(\gamma-p)}}.
			\end{align*}
			{So, since the latter inequality holds for every \( a,b \in  [0, T] \), it follows from Lemma \ref{INM} that}
			\begin{align}\label{B11} 
				\begin{split}
					&\left(\sum_{k}\frac{\left\vert F_{l}(\tau_k,\tau_{k+1})  \right \vert_{\alpha-(l-r)\gamma}^{\frac{1}{(l-r)(\gamma-p)}}}{(\tau_{k+1}-\tau_{k})^{\frac{p}{\gamma-p}}}\right)^{(l-r)(\gamma-p)}\\&\myquad[1]\lesssim\left(\sum_{0\leq j<l-1}\sum_{k}  (\tau_{k+1}-\tau_{k})^{1+\frac{P_j}{(l-r)(\gamma-p)}}W_{\Pi^{j+1}(\mathbf{X}),\gamma,p}^{\frac{j+1}{l-r}}(\tau_{k},\tau_{k+1})\vert\xi^{j}_{\tau_{k}}\vert_{\alpha-j\gamma-\sigma}^{\frac{1}{(l-r)(\gamma-p)}}\right)^{(l-r)(\gamma-p)} \\&\myquad[2]\lesssim\sum_{0\leq j<l-1}\left(\sum_k(\tau_{k+1}-\tau_{k})^{1+\frac{P_j}{(l-r)(\gamma-p)}}W_{\Pi^{j+1}(\mathbf{X}),\gamma,p}^{\frac{j+1}{l-r}}(\tau_{k},\tau_{k+1})\vert\xi^{j}_{\tau_{k}}\vert_{\alpha-j\gamma-\sigma}^{\frac{1}{(l-r)(\gamma-p)}}\right)^{(l-r)(\gamma-p)}\\&\myquad[3]\lesssim\sum_{0\leq j<l-1}\sup_{\tau\in [s,t]}\vert\xi^{j}_\tau\vert_{\alpha-j\gamma-\sigma}W_{\Pi^{j+1}(\mathbf{X}),\gamma,p}^{(j+1)(\gamma-p)}(s,t)\left(\sum_k(\tau_{k+1}-\tau_{k})^{1+\frac{P_j}{(l-r)(\gamma-p)}}\right)^{(l-r)(\gamma-p)}\\
					&\myquad[4]\leq \sum_{0\leq j<l-1}(t-s)^{(l-r)(\gamma-p)+P_j}W_{\Pi^{j+1}(\mathbf{X}),\gamma,p}^{(j+1)(\gamma-p)}(s,t)\sup_{\tau\in [s,t]}\vert\xi^{j}_\tau\vert_{\alpha-j\gamma-\sigma}\\&\myquad[5]\lesssim  \sum_{0\leq j<l-1}W_{\Pi^{j+1}(\mathbf{X}),\gamma,p}^{(j+1)(\gamma-p)}(s,t)\sup_{\tau\in [s,t]}\vert\xi^{j}_\tau\vert_{\alpha-j\gamma-\sigma} .
				\end{split}
			\end{align}
			\textbf{B)} The estimate for \(H_l\) is quite similar to that for \(F_l\). For the reader's convenience, we provide the details. Note that for every $l-1< j<N$ and $r\in\lbrace 0,1\rbrace$ we have that
			\begin{align*}
				\alpha-j\gamma-\sigma\leq \alpha-(l-r)\gamma,
			\end{align*}
			so from \eqref{regularity}
			\begin{align*}
				\left\vert U_{b,a}\xi^{j}_{a}\right\vert_{\alpha-(l-r)\gamma}\lesssim (b-a)^{(l-r-j)\gamma-\sigma}\vert\xi^{j}_{a}\vert_{\alpha-j\gamma-\sigma}.
			\end{align*}
			This further leads to
			\begin{align*}
				\left\vert H_{l}(a,b) \right \vert_{\alpha-(l-r)\gamma}&\lesssim\sum_{l-1< j<N}(b-a)^{(l-r-j)\gamma-\sigma}(b-a)^{(j+1)p}W_{\Pi^{j+1}(\mathbf{X}),\gamma,p}^{(j+1)(\gamma-p)}(a,b)\vert\xi^{j}_{a}\vert_{\alpha-j\gamma-\sigma}\\
				&=(b-a)^{(l-r)p}\sum_{l-1< j<N}(b-a)^{P_j+(l-r)(\gamma-p)}W_{\Pi^{j+1}(\mathbf{X}),\gamma,p}^{(j+1)(\gamma-p)}(a,b)\vert\xi^{j}_{a}\vert_{\alpha-j\gamma-\sigma}.
			\end{align*}
			{Thus, from Lemma \ref{INM}, we get}
			\begin{align}\label{sydv}
				\begin{split}
					\frac{\left\vert H_{l}(a,b)  \right \vert_{\alpha-(l-r)\gamma}^{\frac{1}{(l-r)(\gamma-p)}}}{(b-a)^{\frac{p}{\gamma-p}}}\lesssim \sum_{l-1< j<N}(b-a)^{\frac{P_j+(l-r)(\gamma-p)}{(l-r)(\gamma-p)}}W_{\Pi^{j+1}(\mathbf{X}),\gamma,p}^{\frac{j+1}{l-r}}(a,b)\vert\xi^{j}_{a}\vert_{\alpha-j\gamma-\sigma}^{\frac{1}{(l-r)(\gamma-p)}}
				\end{split}
			\end{align}
			For every $l-1< j<N$ and $r\in \lbrace 0,1\rbrace$, we have 
			\begin{align*}
				\frac{j+1}{l-r}>1.
			\end{align*}
			{Note that inequality \eqref{sydv} holds for every \( [a ,b]\subseteq [0, T] \). Thus, from \eqref{INM}, and using the fact that \( W_{\Pi^{j+1}(\mathbf{X}), \gamma, p} \) is a control function, we conclude that}
				\begin{align}\label{C11}
				\begin{split}
					&\left(\sum_{k}\frac{\left\vert H_{l}(\tau_k,\tau_{k+1})  \right \vert_{\alpha-(l-r)\gamma}^{\frac{1}{(l-r)(\gamma-p)}}}{(\tau_{k+1}-\tau_{k})^{\frac{p}{\gamma-p}}}\right)^{(l-r)(\gamma-p)}\\&\myquad[1]\lesssim\left(\sum_{l-1< j<N}\sum_k(\tau_{k+1}-\tau_k)^{{\frac{(P_j+(l-r)(\gamma-p)}{(l-r)(\gamma-p)}}}W_{\Pi^{j+1}(\mathbf{X}),\gamma,p}^{\frac{j+1}{l-r}}(\tau_k,\tau_{k+1})\vert\xi^{j}_{\tau_{k}}\vert_{\alpha-j\gamma-\sigma}^{\frac{1}{(l-r)(\gamma-p)}}\right)^{(l-r)(\gamma-p)}\\&\myquad[2] \lesssim\sum_{l-1< j<N}\left(\sum_k(\tau_{k+1}-\tau_k)^{{\frac{(P_j+(l-r)(\gamma-p)}{(l-r)(\gamma-p)}}}W_{\Pi^{j+1}(\mathbf{X}),\gamma,p}^{\frac{j+1}{l-r}}(\tau_k,\tau_{k+1})\vert\xi^{j}_{\tau_{k}}\vert_{\alpha-j\gamma-\sigma}^{\frac{1}{(l-r)(\gamma-p)}}\right)^{(l-r)(\gamma-p)} \\
					&\myquad[3]\leq \sum_{l-1< j<N}(t-s)^{P_j+(l-r)(\gamma-p)}\sup_{\tau\in [s,t]}\vert\xi^{j}_\tau\vert_{\alpha-j\gamma-\sigma}\left(\sum_{k}W_{\Pi^{j+1}(\mathbf{X}),\gamma,p}^{\frac{j+1}{l-r}}(\tau_k,\tau_{k+1})\right)^{(l-r)(\gamma-p)}\\&\myquad[4]\leq \sum_{l-1< j<N}(t-s)^{P_j+(l-r)(\gamma-p)}\sup_{\tau\in [s,t]}\vert\xi^{j}_\tau\vert_{\alpha-j\gamma-\sigma}\left(\sum_{k}W_{\Pi^{j+1}(\mathbf{X}),\gamma,p}(\tau_k,\tau_{k+1})\right)^{(j+1)(\gamma-p)}\\&\myquad[5]\leq \sum_{l-1< j<N}(t-s)^{P_j+(l-r)(\gamma-p)}W_{\Pi^{j+1}(\mathbf{X}),\gamma,p}^{(j+1)(\gamma-p)}(s,t)\sup_{\tau\in [s,t]}\vert\xi^{j}_\tau\vert_{\alpha-j\gamma-\sigma}\\&\myquad[6]\lesssim \sum_{l-1< j<N}W_{\Pi^{j+1}(\mathbf{X}),\gamma,p}^{(j+1)(\gamma-p)}(s,t)\sup_{\tau\in [s,t]}\vert\xi^{j}_\tau\vert_{\alpha-j\gamma-\sigma}.
				\end{split}
			\end{align}
			\item Since $\xi^{l-1}\in L((\mathbb{R}^d)^{\otimes l}, E_{\alpha - (l-1)\gamma - \sigma})$, we have the following statements.
			\begin{itemize}
				\item Since $\alpha - (l-1)\gamma - \sigma\geq \alpha -l\gamma$, we therefore have
				\begin{align*}
					\left\vert\left(U_{b,a}\xi^{l-1}_{a}\circ(\Pi^{l}(\mathbf{X}))_{a,b}\right)\right\vert_{\alpha-l\gamma}\lesssim \sup_{\tau\in [s,t]}\vert\xi^{l-1}_{\tau}\vert_{\alpha - (l-1)\gamma - \sigma}(b-a)^{lp}W_{\Pi^{l}(\mathbf{X}),\gamma,p}^{l(\gamma-p)}(a,b).
				\end{align*} 
				As in the previous cases, we obtain 
				\begin{align}\label{D11}
					\left(\sum_{k}\frac{\left\vert\left(U_{\tau_{k+1},\tau_k}\xi^{l-1}_{\tau_k}\circ(\Pi^{l}(\mathbf{X}))_{\tau_k,\tau_{k+1}}\right)\right\vert_{\alpha-l\gamma}^{\frac{1}{l(\gamma-p)}}}{(\tau_{k+1}-\tau_k)^{\frac{p}{\gamma-p}}}\right)^{l(\gamma-p)}\lesssim \sup_{\tau\in [s,t]}\vert\xi^{l-1}_{\tau}\vert_{\alpha - (l-1)\gamma - \sigma}W_{\Pi^{l}(\mathbf{X}),\gamma,p}^{l(\gamma-p)}(s,t).
				\end{align}
				\item Since $\alpha - (l-1)\gamma - \sigma\leq \alpha -(l-1)\gamma$, we consequently derive from \eqref{regularity}
				\begin{align*}
					\left\vert\left(U_{b,a}\xi^{l-1}_{a}\circ(\Pi^{l}(\mathbf{X}))_{a,b}\right)\right\vert_{\alpha-(l-1)\gamma}\leq \sup_{\tau\in [s,t]}\vert\xi^{l-1}_{\tau}\vert_{\alpha - (l-1)\gamma - \sigma}(b-a)^{lp-\sigma}W_{\Pi^{l}(\mathbf{X}),\gamma,p}^{l(\gamma-p)}(a,b).
				\end{align*}
				Then, we argue as in \eqref{C11} to conclude that
				\begin{align}\label{E11}
					\begin{split}
						&\left(\sum_{k}\frac{\left\vert\left(U_{\tau_{k+1},\tau_k}\xi^{l-1}_{\tau_k}\circ(\Pi^{l}(\mathbf{X}))_{\tau_k,\tau_{k+1}}\right)\right\vert_{\alpha-(l-1)\gamma}^{\frac{1}{(l-1)(\gamma-p)}}}{(\tau_{k+1}-\tau_k)^{\frac{p}{\gamma-p}}}\right)^{(l-1)(\gamma-p)}\\&\lesssim (t-s)^{p-\sigma}\sup_{\tau\in [s,t]}\vert\xi^{l-1}_{\tau}\vert_{\alpha - (l-1)\gamma - \sigma}W_{\Pi^{l}(\mathbf{X}),\gamma,p}^{l(\gamma-p)}(s,t)\lesssim \sup_{\tau\in [s,t]}\vert\xi^{l-1}_{\tau}\vert_{\alpha - (l-1)\gamma - \sigma}W_{\Pi^{l}(\mathbf{X}),\gamma,p}^{l(\gamma-p)}(s,t).
					\end{split}
				\end{align}
			\end{itemize}
			\textbf{C)}  To estimate \(D_l\), we use Theorem \ref{shdccsa}. From \eqref{sodais}
			\begin{align}\label{SAASas}
            \begin{split}    
				&\vert D_{l}(a,b)\vert_{\alpha-(l-r)\gamma}\lesssim (b-a)^{(l-r)p}\times\\&\sum_{0\leq j<N}W_{\Pi^{j+1}(\mathbf{X}),\gamma,p}^{(j+1)(\gamma-p)}(a,b)\left( (b-a)^{P_{N}+(l-r)(\gamma-p)}W_{R^{j,N},\alpha-\sigma,\gamma,p,2}^{(N-j)(\gamma-p)}(a,b)+(b-a)^{P_j+(l-r)(\gamma-p)}\sup_{\tau\in [s,t]}\left\vert\xi^{j}_{\tau}\right\vert_{\alpha-j\gamma-\sigma}\right)\\& \lesssim (b-a)^{(l-r)p}\times\sum_{0\leq j<N}W_{\Pi^{j+1}(\mathbf{X}),\gamma,p}^{(j+1)(\gamma-p)}(a,b)\left( W_{R^{j,N},\alpha-\sigma,\gamma,p,2}^{(N-j)(\gamma-p)}(a,b)+(b-a)^{P_j+(l-r)(\gamma-p)}\sup_{\tau\in [s,t]}\left\vert\xi^{j}_{\tau}\right\vert_{\alpha-j\gamma-\sigma}\right).
                            \end{split}
			\end{align}
Thus, from Lemma \ref{INM} and \eqref{SAASas}, we obtain			\begin{align*}
				&\frac{\vert D_{l}(a,b)\vert_{\alpha-(l-r)\gamma}^{\frac{1}{(l-r)(\gamma-p)}}}{(b-a)^{\frac{p}{\gamma-p}}}\lesssim\sum_{0\leq j<N}\left(\underbrace{W_{\Pi^{j+1}(\mathbf{X}),\gamma,p}^{\frac{j+1}{N+1}}(a,b) W_{R^{j,N},\alpha-\sigma,\gamma,p,2}^{\frac{N-j}{N+1}}(a,b)}_{W_{1}(a,b)}\right)^{\frac{N+1}{l-r}}\\&+\sum_{0\leq j<N}\left(\underbrace{(b-a)^{\frac{P_j+(l-r)(\gamma-p)}{P_j+(l+j+1-r)(\gamma-p)}}W_{\Pi^{j+1}(\mathbf{X}),\gamma,p}^{\frac{(j+1)(\gamma-p)}{P_j+(l-r+j+1)(\gamma-p)}}(a,b)}_{W_{2}(a,b)}\right)^{\frac{P_j+(l-r+j+1)(\gamma-p)}{(l-r)(\gamma-p)}}\sup_{\tau\in [s,t]}\left\vert\xi^{j}_{\tau}\right\vert_{\alpha-j\gamma-\sigma}^{\frac{1}{(l-r)(\gamma-p)}}.
			\end{align*}
			{Since \( W_1 \) and \( W_2 \) (which are defined above) are clearly controlled functions, we can apply Lemma \ref{INM}, in particular~\eqref{85sd} and \eqref{785a}, and arguing similarly to \eqref{C11}, to conclude}
			\begin{align}\label{F11}
				\begin{split}
					&\left(\sum_k\frac{\vert D_{l}(\tau_k,\tau_{k+1})\vert_{\alpha-(l-r)\gamma}^{\frac{1}{(l-r)(\gamma-p)}}}{(\tau_{k+1}-\tau_{k})^{\frac{p}{\gamma-p}}}\right)^{(l-r)(\gamma-p)}\lesssim \sum_{0\leq j<N}W_{\Pi^{j+1}(\mathbf{X}),\gamma,p}^{(j+1)(\gamma-p)}(s,t) W_{R^{j,N},\alpha-\sigma,\gamma,p,2}^{(N-j)(\gamma-p)}(s,t)\\&\quad+\sum_{0\leq j<N}(t-s)^{P_j+(l-r)(\gamma-p)}W_{\Pi^{j+1}(\mathbf{X}),\gamma,p}^{(j+1)(\gamma-p)}(s,t)\sup_{\tau\in [s,t]}\left\vert\xi^{j}_{\tau}\right\vert_{\alpha-j\gamma-\sigma}
					\\&\myquad[2]\lesssim \sum_{0\leq j<N}\left( W_{\Pi^{j+1}(\mathbf{X}),\gamma,p}^{(j+1)(\gamma-p)}(s,t) W_{R^{j,N},\alpha-\sigma,\gamma,p,2}^{(N-j)(\gamma-p)}(s,t)+W_{\Pi^{j+1}(\mathbf{X}),\gamma,p}^{(j+1)(\gamma-p)}(s,t)\sup_{\tau\in [s,t]}\left\vert\xi^{j}_{\tau}\right\vert_{\alpha-j\gamma-\sigma}\right).
				\end{split}
			\end{align}
			\textbf{D)} It remains to estimate $C_{l}$. Note that from Corollary \ref{saswf65} and the regularizing properties of parabolic evolution families~\eqref{regularity} we further infer that
			\begin{align*}
				&\left\vert(U_{b,a}-I)\int_{s}^{a}U_{a,v}\xi_{v}\circ\mathrm{d}\mathbf{X}_v\right\vert_{\alpha-(l-r)\gamma}\leq (b-a)^{(l-r)\gamma}\left\vert\int_{s}^{a}U_{a,v}\xi_{v}\circ\mathrm{d}\mathbf{X}_v\right\vert_\alpha\\
				&\lesssim (b-a)^{(l-r)\gamma}\sum_{0\leq j<N}W_{\Pi^{j+1}(\mathbf{X}),\gamma,p}^{(j+1)(\gamma-p)}(a,b)\left( (b-a)^{P_{N}}W_{R^{j,N},\alpha-\sigma,\gamma,p,2}^{(N-j)(\gamma-p)}(a,b)+(b-a)^{P_{j}}\sup_{\tau\in [a,b]}\left\vert\xi^{j}_{\tau}\right\vert_{\alpha-j\gamma-\sigma}\right).
			\end{align*}
			Therefore, we argue as in the previous case to conclude that			\begin{align}\label{G11}
				\begin{split}
					&\left(\sum_k\frac{\left\vert C_{l}(\tau_k,\tau_{k+1})\right\vert_{\alpha-(l-r)\gamma}^{\frac{1}{(l-r)(\gamma-p)}}}{(\tau_{k+1}-\tau_{k})^{\frac{p}{\gamma-p}}}\right)^{(l-r)(\gamma-p)}\\&\quad\lesssim \sum_{0\leq j<N}W_{\Pi^{j+1}(\mathbf{X}),\gamma,p}^{(j+1)(\gamma-p)}(s,t)\left( (t-s)^{P_{N}}W_{R^{j,N},\alpha-\sigma,\gamma,p,2}^{(N-j)(\gamma-p)}(s,t)+(t-s)^{P_{j}}\sup_{\tau\in [s,t]}\left\vert\xi^{j}_{\tau}\right\vert_{\alpha-j\gamma-\sigma}\right)\\&\myquad[2]\lesssim \sum_{0\leq j<N}\left( W_{\Pi^{j+1}(\mathbf{X}),\gamma,p}^{(j+1)(\gamma-p)}(s,t) W_{R^{j,N},\alpha-\sigma,\gamma,p,2}^{(N-j)(\gamma-p)}(s,t)+W_{\Pi^{j+1}(\mathbf{X}),\gamma,p}^{(j+1)(\gamma-p)}(s,t)\sup_{\tau\in [s,t]}\left\vert\xi^{j}_{\tau}\right\vert_{\alpha-j\gamma-\sigma}\right). 
				\end{split}
			\end{align}
		
		Recall that \(\pi = \{s = \tau_0 < \tau_1 < \cdots < \tau_m = t\}\) is an arbitrary partition of \([s, t]\). Therefore, from \eqref{A11}–\eqref{G11}, we derive the bound claimed in \eqref{52}.
	\end{proof}
	Now we can state the next result which states that the rough integral is a controlled rough path together with suitble estimates. This is similar to~\cite[Corollary 4.6]{GHT21} which work with H\"older norms and paths of regularity $\gamma\in(1/3,1/2)$. However, as already mentioned, we have to replace the H\"older norms by the controls previously defined in order to ensure the integrability of the bounds obtained. 
	\begin{theorem}\label{STTAS}
		Assume \( \mathbf{X} \in \mathscr{C}^{\gamma,p}([0,T]) \) and \(
		\boldsymbol{\xi} \in \mathscr{D}^{\gamma, p}_{\mathbf{X},\alpha-\sigma}\left([0,T]\right)
		\). Additionally, assume that all the conditions of Theorem \ref{shdccsa} hold. Fix an arbitrary interval \( [s,t] \subseteq [0,T] \) such that \( t-s \leq 1 \). For \( a \in [s,t] \) and \( 0 \leq j \leq N \), define:  
		\begin{align*}
			\tilde{\xi}^{0}_{a} & := \int_{s}^{a} U_{a,v} \xi_v \circ \mathrm{d}\mathbf{X}_v, \quad j = 0, \\
			\tilde{\xi}^{j}_{a} & := \xi^{j-1}_{a}, \quad 1 \leq j \leq N.
		\end{align*}
		Then 
		\begin{align*}
			\tilde{\boldsymbol{\xi}} = (\tilde{\xi}^{i})_{0 \leq i < N}\in \tilde{\mathscr{D}}^{\gamma, p}_{\mathbf{X},\alpha}\left([0,T]\right).
		\end{align*}
		In addition, we can find \( 0 < \beta < 1 \), which explicitly depends on \( p, \gamma \), and \( \sigma \), such that
		\begin{align*}
			&\Vert\tilde{\boldsymbol{\xi}}\Vert_{\tilde{\mathscr{D}}^{\gamma, p}_{\mathbf{X},\alpha}\left([s,t]\right)}\lesssim \sup_{0\leq j\leq N}\vert\xi^{j}_{s}\vert_{L\left((\mathbb{R}^d)^{\otimes j+1}, E_{\alpha-j\gamma-\sigma}\right)} \\&+\left((t-s)^{\beta}+W_{\Pi^{1}(\mathbf{X}),\gamma,p}^{(\gamma-\sigma)}(s,t)+\sum_{0<j\leq N }W_{\Pi^{j}(\mathbf{X}),\gamma,p}^{j(\gamma-p)}(s,t)\right)\Vert\boldsymbol{\xi}\Vert_{\mathscr{D}^{\gamma, p}_{\mathbf{X},\alpha-\sigma}\left([s,t]\right)}.
		\end{align*}
	\end{theorem}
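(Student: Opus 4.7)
The plan is to verify both the membership $\tilde{\boldsymbol{\xi}} \in \tilde{\mathscr{D}}^{\gamma,p}_{\mathbf{X},\alpha}([s,t])$ and the norm estimate by controlling each of the families of quantities appearing in the definition~\eqref{YHN457} separately, splitting the argument according to whether the lower index $i$ of the Gubinelli component equals $0$ or $i \geq 1$. The input information is already provided by Propositions~\ref{ATSY} and~\ref{UJMMA} and Corollary~\ref{saswf65}; the task is essentially to reorganise their conclusions so that a factor of $\|\boldsymbol{\xi}\|_{\mathscr{D}^{\gamma,p}_{\mathbf{X},\alpha-\sigma}([s,t])}$ can be factored out with the stated prefactor.

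For $i \geq 1$, since $\tilde{\xi}^i = \xi^{i-1}$ and $\tilde{R}^{i,l} = R^{i-1,l-1}$, the four items I)--IV) of Proposition~\ref{ATSY} furnish directly bounds on $\sup_\tau|\tilde{\xi}^i_\tau|$, on $\tilde{W}_{\tilde{R}^{i,l},\alpha,\gamma,p,1}$, on $\tilde{W}_{\tilde{R}^{i,l},\alpha,\gamma,p,2}$, and on $\tilde{W}_{\tilde{R}^{i,i+1},\alpha,\gamma,p}$. Each right-hand side is a product of a prefactor (either $(t-s)^P$ with $P>0$ or a power of some $W_{\Pi^k(\mathbf{X}),\gamma,p}$) and controls already present in $\|\boldsymbol{\xi}\|_{\mathscr{D}^{\gamma,p}_{\mathbf{X},\alpha-\sigma}([s,t])}$. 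The key arithmetical observation is that when I raise $\tilde{W}_{\tilde{R}^{i,l},\alpha,\gamma,p,1}$ and $\tilde{W}_{\tilde{R}^{i,l},\alpha,\gamma,p,2}$ to the powers $(l-i-1)(\gamma-p)$ and $(l-i)(\gamma-p)$ required by~\eqref{YHN457}, the fractional exponents $\sigma/\gamma$ and $(\gamma-\sigma)/\gamma$ on $W_{R^{i-1,l-1},1}^{(l-i-1)(\gamma-p)}$ and $W_{R^{i-1,l-1},2}^{(l-i)(\gamma-p)}$ sum to $1$, so the total power of $\|\boldsymbol{\xi}\|_{\mathscr{D}^{\gamma,p}_{\mathbf{X},\alpha-\sigma}([s,t])}$ is exactly $1$, as claimed. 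The isolated term $|\xi^{i-1}_s|$ appearing in I) contributes to the $\sup_{j}|\xi^j_s|$ summand in the theorem.

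For $i=0$, Proposition~\ref{UJMMA} supplies the estimate~\eqref{52} simultaneously for all three quantities $\tilde{W}_{\tilde{R}^{0,l},\alpha,\gamma,p,1}$, $\tilde{W}_{\tilde{R}^{0,l},\alpha,\gamma,p,2}$, $\tilde{W}_{\tilde{R}^{0,1},\alpha,\gamma,p}$, and its right-hand side has exactly the structure $W_{\Pi^{j+1}(\mathbf{X}),\gamma,p}^{(j+1)(\gamma-p)}$ times factors controlled by $\|\boldsymbol{\xi}\|_{\mathscr{D}^{\gamma,p}_{\mathbf{X},\alpha-\sigma}([s,t])}$. For the remaining sup-norm $\sup_{a\in[s,t]}|\tilde{\xi}^0_a|_{E_\alpha}$ I would apply Corollary~\ref{saswf65} on the sub-interval $[s,a]$, take the supremum in $a$, and use monotonicity of the control functions to bound everything by $W_{\Pi^{j+1}(\mathbf{X}),\gamma,p}^{(j+1)(\gamma-p)}(s,t)$ times either $W_{R^{j,N},\alpha-\sigma,\gamma,p,2}^{(N-j)(\gamma-p)}(s,t)$ or $\sup_{\tau\in[s,t]}|\xi^j_\tau|$, multiplied by the positive powers $(t-s)^{P_N}$ or $(t-s)^{P_j}$ (strictly positive by Assumption~\ref{ADssdfg}).

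Finally I would sum over all indices $i$ and $l$, factor out $\|\boldsymbol{\xi}\|_{\mathscr{D}^{\gamma,p}_{\mathbf{X},\alpha-\sigma}([s,t])}$ using the power-matching observation above, and choose
\[
\beta := \min\Bigl\{\gamma-\sigma,\ \tfrac{p(\gamma-\sigma)}{\gamma},\ \min_{0\le j\le N} P_j\Bigr\} > 0,
\]
using $t-s \leq 1$ to replace each $(t-s)^P$ with $(t-s)^\beta$. The remaining prefactors are precisely the $W_{\Pi^k(\mathbf{X}),\gamma,p}^{k(\gamma-p)}(s,t)$ for $1\leq k\leq N$, together with the term $W_{\Pi^1(\mathbf{X}),\gamma,p}^{\gamma-\sigma}(s,t)$ coming from Proposition~\ref{ATSY}~I). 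The main obstacle here is not conceptual but strictly bookkeeping: one must carefully track which of the many factors generated by Propositions~\ref{ATSY} and~\ref{UJMMA} get absorbed into $\|\boldsymbol{\xi}\|$ (to the correct fractional power) versus which become $(t-s)^\beta$ or $W_{\Pi^k(\mathbf{X}),\gamma,p}$ prefactors, and to verify that the interpolation exponents $\sigma/\gamma$ and $(\gamma-\sigma)/\gamma$ combine so that the dependence on $\|\boldsymbol{\xi}\|$ is truly linear rather than producing a spurious higher power.
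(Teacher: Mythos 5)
Your proposal is correct and follows essentially the same route as the paper, whose proof of Theorem~\ref{STTAS} consists precisely of combining Proposition~\ref{ATSY} (for the components with $i\geq 1$) with Proposition~\ref{UJMMA} (for the level-zero remainders). The extra details you supply — invoking Corollary~\ref{saswf65} for $\sup_{a}|\tilde{\xi}^0_a|_{E_\alpha}$, the observation that the interpolation exponents $\sigma/\gamma$ and $(\gamma-\sigma)/\gamma$ sum to one so the dependence on $\Vert\boldsymbol{\xi}\Vert_{\mathscr{D}^{\gamma,p}_{\mathbf{X},\alpha-\sigma}}$ stays linear, and the explicit choice of $\beta$ — are exactly the bookkeeping the paper leaves implicit.
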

	\begin{proof}
		The proof is based on Propositions \ref{ATSY} and \ref{UJMMA}. Additionally, recall the definitions in \eqref{YHN123} and \eqref{YHN457}. Indeed, in Proposition \ref{ATSY}, we estimated the higher elements of the norm \( \tilde{\boldsymbol{\xi}} \), i.e., for \( i \geq 1 \), in terms of the norm of the original path \( \boldsymbol{\xi} \). The element corresponding to the lowest level, i.e., the integrand path, is estimated in Proposition \ref{UJMMA}. Combining these results proves the statement.
	\end{proof}
	\begin{remark}
		The dependency of \( \beta \) can be explicitly obtained, but we do not mention it here since it is not important for our purposes. We also assume that \( t-s \leq 1 \). However, this is not a restriction. Indeed, when \( t-s > 1 \), the dependency of \( \beta \) can be adjusted accordingly.
        The reason we assume \( t - s \leq 1 \) is that it is natural to estimate the solution on small intervals and then combine these estimates to derive a bound over the entire interval.
	\end{remark}
	
	\section{Solution theory}\label{sec:sol}
	After defining a suitable integral in our setting, we point out that the existence of a solution of~\eqref{Main_Equation} can be obtained by a standard fixed-point argument. We omit the proof, since it is similar to the case \( \frac{1}{3}<\gamma \leq \frac{1}{2} \) treated in~\cite{GH19, GHT21, HN22} for a diffusion coefficient $G\in C^3_b$. Since $G$ is linear in our case, we only provide the necessary estimates required in order to perform this fixed-point argument. 
	We emphasize that it is possible to incorporate the case where $G$ is nonlinear but bounded, with bounded derivatives up to $N+1$. We only focus here on the linear case and state the following assumptions on the drift and diffusion coefficients, mentioning that the conditions on the drift term are similar to the ones imposed in~\cite{BGS25}. 
	
	\begin{assumption}\label{ASSDS}
		We assume
		\begin{enumerate}
			\item There exists $\delta\in[0,1)$ such that $F:[0,T]\times E_{\alpha}\to E_{\alpha-\delta}$ is Lipschitz continuous in $E_\alpha$, uniformly in $[0,T]$. That means for every $t\in [0,T]$, $F(t,\cdot)=:F_t(\cdot)$ is Lipschitz continuous with constant $L_{t,F}$ such that $L_F:=\sup_{t\in [0,t]}L_{t,F}<\infty$. In particular, we have the following Lipschitz condition:  
\begin{align*}
    |F_t(z) - F_t(y)|_{E_{\alpha-\delta}} &\leq L_F |z - y|_{E_\alpha}, 
\end{align*}
for all \( y, z \in E_\alpha \). In particular, this implies that  
\[
 |F_t(y)|_{E_{\alpha-\delta}} \leq C_F (1 + |y|_{E_{\alpha}}),
\]
where  
\[
C_F := \max \left\{ L_F, \sup_{t \in [0,T]} |F_t(0)|_{E_{\alpha-\delta}} \right\} < \infty.
\]
			\item  For $0 \leq j< N$ we assume that  $G:=[0,T]\times E_{\alpha-j\gamma}\to L\left(\mathbb{R}^d, E_{\alpha-j\gamma-\sigma}\right)$. For every $t\in [0,T]$, $G(t,\cdot)=:G_t(\cdot)$ is a continuous linear map such that 
			\begin{align*}
				C_{G}:=\max_{0\leq j< N}\sup_{t\in[0,T]}\Vert G_{t}(.)\Vert_{L\left(E_{\alpha-j\gamma},L\left(\mathbb{R}^d, E_{\alpha-j\gamma-\sigma}\right)\right)}<\infty .
			\end{align*}
			For every \( z \in E_{\alpha} \), we assume that the map
			\begin{align*}
				G(.,z) : E_{\alpha} \to E_{\alpha-\sigma}, \quad t \mapsto G(t, z)
			\end{align*}
			is H\"older continuous with parameter $N\gamma$. In particular,  there exists a constant $M_G>0$ such that for every $s,t\in [0,T]$ and $z\in E_{\alpha}$
			\begin{align*}
				 &\left| G_{t}(z)-G_{s}(z) \right|_{E_{\alpha-\sigma}} \leq M_G  (t-s)^{N\gamma}| z |_{E_{\alpha}},\\
                 &\left| G_{t}(z)\right|_{E_{\alpha-\sigma}} \leq M_G | z |_{E_{\alpha}}.
			\end{align*}
		\end{enumerate}
	\end{assumption}
	\begin{definition}
		Under Assumption \ref{ASSDS} for every \( 1 \leq k \leq N \), \( t \in [0,T] \) and \( 0 \leq j < N+1-k \), we define
		\begin{align*}
			G^{\circ k}_t : E_{\alpha-j\gamma} \to L\left((\mathbb{R}^d)^{\otimes k}, E_{\alpha-(k+j-1)\gamma-\sigma}\right),
		\end{align*}
		where \( G^{\circ 1} = G \).  These maps are defined recursively for $z\in E_{\alpha-j\gamma}$ and $\bigotimes_{i=1}^{N+1} v_i\in (\mathbb{R}^d)^{\otimes N+1}$  as follows		\begin{align*}
			G^{\circ k}_t(z)\left(\bigotimes_{i=1}^k v_i\right) 
			= G^{\circ (k-1)}_t\left(G^{\circ 1}_t(z)(v_1)\right)\left(\bigotimes_{i=2}^k v_i\right)\in E_{\alpha-j\gamma-\sigma}.
		\end{align*}
		Since \( G_t: E_{\alpha - j\gamma} \to L(\mathbb{R}^d, E_{\alpha - j\gamma - \sigma}) \), we use the following general algebraic convention without specifying \( k \):
		\begin{align}\label{CIRC}
			\begin{split}
				&G_t^{\circ 1} : L\left((\mathbb{R}^d)^{\otimes k}, E_{\alpha - j\gamma}\right) 
				\to L\left((\mathbb{R}^d)^{\otimes (k+1)}, E_{\alpha - j\gamma - \sigma}\right), \\
				&G_t^{\circ 1}\left(J\right)\left(\bigotimes_{i=1}^{k+1} v_i\right) 
				:= G_t\left(J\left(\bigotimes_{i=1}^k v_i\right)\right)(v_{k+1}) \in E_{\alpha - j\gamma - \sigma},
			\end{split}
		\end{align}
		for every  $J\in L\left((\mathbb{R}^d)^{\otimes k}, E_{\alpha - j\gamma}\right) $.
		This enables us to define for \( \tilde{\boldsymbol{\xi}} \in \tilde{\mathscr{D}}^{\gamma, p}_{\mathbf{X},\alpha}\left([0,T]\right) \)
		\begin{align*}
			G(\tilde{\boldsymbol{\xi}})(t) := \big(G^{\circ 1}_t(\tilde{\xi}^j_t)\big)_{0 \leq j < N},
		\end{align*}
		where \( G^{\circ 1}_t(\tilde{\xi}^j_t) \in L\left((\mathbb{R}^d)^{\otimes (j+1)}, E_{\alpha - j\gamma - \sigma}\right) \) is given by \eqref{CIRC}, i.e.,
		\begin{align}\label{asfdge}
			G^{\circ 1}_t(\tilde{\xi}^j_t)\left(\bigotimes_{i=1}^{j+1} v_i\right) 
			:= G_t\big(\tilde{\xi}^j_t\big(\bigotimes_{i=1}^j v_i\big)\big)(v_{j+1}) \in E_{\alpha - j\gamma - \sigma}.
		\end{align}
	\end{definition}
	We can now prove the following  lemma, which provides an estimate for the composition of $G$ with the controlled rough path $\tilde{\boldsymbol{\xi}}$. As already stated we remove for notational simplicity the Gubinelli derivatives from the following expressions. 
	\begin{lemma}\label{compoo}
		Under the Assumption \ref{ASSDS}, we let $\tilde{\boldsymbol{\xi}} \in \tilde{\mathscr{D}}^{\gamma, p}_{\mathbf{X},\alpha}\left([0,T]\right)$. Then 
		\begin{align}\label{HAGt1}
			G(\tilde{\boldsymbol{\xi}})\in \mathscr{D}^{\gamma, p}_{\mathbf{X},\alpha-\sigma}\left([0,T]\right).
		\end{align}
		{Moreover, for a constant \( C_G \) which only depends on \( G \), and for every \( [s,t] \subseteq [0,T] \), it holds that}
		\begin{align*}
			\Vert G(\tilde{\boldsymbol{\xi}})\Vert_{\mathscr{D}^{\gamma, p}_{\mathbf{X},\alpha-\sigma}\left([s,t]\right)}\leq  C_G\Vert \tilde{\boldsymbol{\xi}}\Vert_{\tilde{\mathscr{D}}^{\gamma, p}_{\mathbf{X},\alpha}\left([s,t]\right)}.
		\end{align*}
	\end{lemma}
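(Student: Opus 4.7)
The plan is to verify the three defining conditions of Definition \ref{controlled} for the components $\xi^j := G^{\circ 1}_\cdot(\tilde{\xi}^j)$, exploiting that $G_t$ is linear in the space variable. The starting observation is that since $G^{\circ 1}_t$ is a continuous linear map from $L((\mathbb{R}^d)^{\otimes j}, E_{\alpha-j\gamma})$ into $L((\mathbb{R}^d)^{\otimes (j+1)}, E_{\alpha-j\gamma-\sigma})$ with operator norm uniformly bounded by $C_G$ (Assumption~\ref{ASSDS}(2)), one immediately obtains
\[
\sup_{\tau \in [s,t]} |\xi^j_\tau|_{L((\mathbb{R}^d)^{\otimes (j+1)}, E_{\alpha-j\gamma-\sigma})} \leq C_G \sup_{\tau \in [s,t]} |\tilde{\xi}^j_\tau|_{L((\mathbb{R}^d)^{\otimes j}, E_{\alpha-j\gamma})},
\]
which controls the sup-norm part of $\|G(\tilde{\boldsymbol{\xi}})\|$ in~\eqref{YHN123}.

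The next step is to derive the remainder expansion~\eqref{xii} for $\xi^i$ from the expansion~\eqref{xiii} of $\tilde{\xi}^i$. Using linearity of $G_t$ in its argument, I would split
\[
\delta \xi^i_{s,t} = G^{\circ 1}_t(\delta \tilde{\xi}^i_{s,t}) + (G^{\circ 1}_t - G^{\circ 1}_s)(\tilde{\xi}^i_s),
\]
substitute~\eqref{xiii} into the first term, and re-index via $G^{\circ 1}_t(\tilde{\xi}^j_s) = \xi^j_s + (G^{\circ 1}_t - G^{\circ 1}_s)(\tilde{\xi}^j_s)$. This yields the expansion~\eqref{xii} with remainder
\begin{align*}
R^{i,l}_{s,t} = G^{\circ 1}_t(\tilde{R}^{i,l}_{s,t}) + (G^{\circ 1}_t - G^{\circ 1}_s)(\tilde{\xi}^i_s) + \sum_{i < j < l} \bigl[(G^{\circ 1}_t - G^{\circ 1}_s)(\tilde{\xi}^j_s)\bigr] \circ (\Pi^{j-i}(\mathbf{X}))_{s,t}.
\end{align*}
When $l=i+1$ only the first two summands survive, which handles $R^{i,i+1}_{s,t} = \delta\xi^i_{s,t}$.

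It then remains to estimate this remainder in the two required norms. The piece $G^{\circ 1}_t(\tilde{R}^{i,l}_{s,t})$ is controlled by $C_G |\tilde{R}^{i,l}_{s,t}|$ in each of the spaces $E_{\alpha-(l-1)\gamma}$ and $E_{\alpha-l\gamma}$, so $\tilde{W}_{\tilde{R}^{i,l},\alpha,\gamma,p,r}$ transfers to $W_{R^{i,l},\alpha-\sigma,\gamma,p,r}$ up to the multiplicative constant $C_G$. For the time-Hölder pieces I would use the Hölder continuity of $G$ in time, applied at each interpolation level, which yields
\[
\bigl|(G^{\circ 1}_t - G^{\circ 1}_s)(z)\bigr|_{L((\mathbb{R}^d)^{\otimes (j+1)}, E_{\alpha-j\gamma-\sigma})} \lesssim (t-s)^{N\gamma} |z|_{L((\mathbb{R}^d)^{\otimes j}, E_{\alpha-j\gamma})}.
\]
Since $l - i \leq N$, we have $N\gamma \geq (l-i)\gamma > (l-i)p$, so after raising to the power $1/((l-i)(\gamma-p))$ and dividing by $(t-s)^{p/(\gamma-p)}$ the surplus factor $(t-s)^{N\gamma}$ has a strictly positive time exponent that allows summation over any partition of $[s,t] \subseteq [0,T]$ to be uniformly bounded. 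The coupling with the iterated integral $(\Pi^{j-i}(\mathbf{X}))_{s,t}$ in the sum is handled by Hölder's inequality across the partition, exactly as in Proposition~\ref{ATSY} (cf.~\eqref{AXZ}), producing a factor $W_{\Pi^{j-i}(\mathbf{X}),\gamma,p}^{(j-i)(\gamma-p)}$ that is absorbed into $\|\tilde{\boldsymbol{\xi}}\|_{\tilde{\mathscr{D}}^{\gamma,p}_{\mathbf{X},\alpha}([s,t])}$ (since this norm already dominates such $W_{\Pi^{\cdot}(\mathbf{X})}$ factors together with $\sup |\tilde\xi^j|$).

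The main obstacle I expect is this last bookkeeping, namely distributing $(t-s)^{N\gamma}$ across the powers $1/((l-i)(\gamma-p))$ and $1/((l-i-1)(\gamma-p))$ dictated by the two control functions and combining it with $W_{\Pi^{j-i}(\mathbf{X}),\gamma,p}$ via a suitable Hölder exponent so that the partition sum remains finite and linear in the data norm. The inequality $N \geq l-i$ provides exactly the regularity surplus needed, and the manipulation is structurally analogous to (but strictly simpler than) the one carried out in Proposition~\ref{UJMMA}, since no rough integral enters and the $G$-dependence on $\tilde{\xi}^i$ is pointwise and linear.
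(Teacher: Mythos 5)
Your overall strategy (verify Definition~\ref{controlled} for $\xi^j:=G^{\circ 1}_\cdot(\tilde\xi^j)$ using linearity, uniform boundedness and time-H\"older continuity of $G$) is the same as the paper's, but your choice of decomposition creates a genuine problem. You anchor $G$ at the \emph{right} endpoint, writing $\delta\xi^i_{s,t}=G^{\circ 1}_t(\delta\tilde\xi^i_{s,t})+(G^{\circ 1}_t-G^{\circ 1}_s)(\tilde\xi^i_s)$ and then re-indexing $G^{\circ 1}_t(\tilde\xi^j_s)=\xi^j_s+(G^{\circ 1}_t-G^{\circ 1}_s)(\tilde\xi^j_s)$, so your remainder contains the extra terms $\bigl[(G^{\circ 1}_t-G^{\circ 1}_s)(\tilde\xi^j_s)\bigr]\circ(\Pi^{j-i}(\mathbf{X}))_{s,t}$. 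Estimating these necessarily brings in $\left|(\Pi^{j-i}(\mathbf{X}))_{s,t}\right|\le (t-s)^{(j-i)p}W^{(j-i)(\gamma-p)}_{\Pi^{j-i}(\mathbf{X}),\gamma,p}(s,t)$, and your claim that the resulting factor $W^{(j-i)(\gamma-p)}_{\Pi^{j-i}(\mathbf{X}),\gamma,p}(s,t)\sup_\tau|\tilde\xi^j_\tau|$ is ``absorbed into $\Vert\tilde{\boldsymbol{\xi}}\Vert_{\tilde{\mathscr{D}}^{\gamma,p}_{\mathbf{X},\alpha}}$'' is false: the norm \eqref{YHN457} consists only of the sup-norms of the $\tilde\xi^i$ and the remainder controls $\tilde W_{\tilde R^{i,l}}$, and contains no rough-path controls $W_{\Pi^{j}(\mathbf{X})}$. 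Your route therefore only yields $\Vert G(\tilde{\boldsymbol{\xi}})\Vert_{\mathscr{D}^{\gamma,p}_{\mathbf{X},\alpha-\sigma}}\lesssim\bigl(1+\text{poly}(W_{\mathbf{X},\gamma,p}(s,t))\bigr)\Vert\tilde{\boldsymbol{\xi}}\Vert_{\tilde{\mathscr{D}}^{\gamma,p}_{\mathbf{X},\alpha}}$, which is strictly weaker than the claimed bound with a constant $C_G$ depending only on $G$; the $\mathbf{X}$-independence of this constant is precisely what is needed downstream (Lemma~\ref{YBAS54}, Theorem~\ref{PRIORI}) to keep the final bounds integrable.

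The fix is to anchor $G$ at the \emph{left} endpoint, as the paper does: write $\delta\xi^i_{u,v}=G^{\circ 1}_u(\delta\tilde\xi^i_{u,v})+\bigl(G^{\circ 1}_v-G^{\circ 1}_u\bigr)(\tilde\xi^i_v)$ and substitute \eqref{xiii}. Then $G^{\circ 1}_u\bigl(\tilde\xi^j_u\circ(\Pi^{j-i}(\mathbf{X}))_{u,v}\bigr)=\xi^j_u\circ(\Pi^{j-i}(\mathbf{X}))_{u,v}$ exactly (by the convention \eqref{CIRC}), no correction sum appears, and the remainder is simply $R^{i,l}_{u,v}=G^{\circ 1}_u(\tilde R^{i,l}_{u,v})+(G^{\circ 1}_v-G^{\circ 1}_u)(\tilde\xi^i_v)$, which is free of $\mathbf{X}$. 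The first piece transfers the controls $\tilde W_{\tilde R^{i,l},\cdot}$ with factor $C_G$; the second is handled by the time-H\"older bound exactly as you propose (your exponent count $\frac{N\gamma}{(l-i)(\gamma-p)}-\frac{p}{\gamma-p}\ge 1$ is correct), giving a contribution $\lesssim\sup_\tau|\tilde\xi^i_\tau|$ with constants depending only on $G$ and $T$. The rest of your argument (sup-norm bound via $C_G$, partition summation) is fine, modulo the same tacit extension of the time-H\"older assumption to the shifted levels of the scale that the paper itself uses.
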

	\begin{proof}
		Recalling Definition \ref{controlled2} and the fact that $G$ is linear, we have for \( [u, v] \subseteq [0, T] \) and \( 0 \leq i, l \leq N \) with \( l - i > 1 \) that
		\begin{align}\label{REEEAA}
			\begin{split}
				G_{v}^{\circ1}(\tilde{\xi}^i_v)-G_{u}^{\circ1}(\tilde{\xi}^i_u)&=G_{u}^{\circ 1}(\delta \tilde{\xi}^i_{u,v})+\left(G_{v}^{\circ1}(\tilde{\xi}^i_v)-G_{u}^{\circ1}(\tilde{\xi}^i_v)\right)\\&=\sum_{i<j<l}G_{u}^{\circ 1}\left(\tilde{\xi}^{j}_u\circ (\Pi^{j-i}(\mathbf{X}))_{u,v}\right)+G_{u}^{\circ1}\left(\tilde{R}^{i,l}_{u,v}\right)+\left(G_{v}^{\circ1}(\tilde{\xi}^i_v)-G_{u}^{\circ1}(\tilde{\xi}^i_v)\right).
			\end{split}
		\end{align}
		We further	set
		\begin{align*}
			&R^{G,i,l}_{u,v}=G_{u}^{\circ1}\left(\tilde{R}^{i,l}_{u,v}\right)+\left(G_{v}^{\circ1}(\tilde{\xi}^i_v)-G_{u}^{\circ1}(\tilde{\xi}^i_v)\right),\\
			&\xi^{j}_{u}:=G_{u}^{\circ 1}\left(\tilde{\xi}^{j}_u\right).
		\end{align*}
		Now, it is enough to verify the condition of Definition \ref{controlled} entailing that $G(\boldsymbol{\tilde{\xi}})$ is a controlled rough path. Since the steps are rather straightforward due to the linearity of $G$, we omit the details.
	\end{proof}
        From Lemma \ref{HAGt1}, we can find a bound for the rough integral of $G(\boldsymbol{\tilde{\xi}})$ with respect to $\mathbf{X}$. 
	\begin{lemma}\label{YBAS54}
		Let Assumption \ref{ASSDS} and all the conditions of Theorem \ref{shdccsa} hold, and let \( [s,t] \subseteq [0,T] \) such $t-s\leq 1$. Assume that \( \tilde{\boldsymbol{\xi}} \in \tilde{\mathscr{D}}^{\gamma, p}_{\mathbf{X},\alpha}([0,T]) \) and   \( G(\tilde{\boldsymbol{\xi}}) \in \mathscr{D}^{\gamma, p}_{\mathbf{X},\alpha-\sigma}([0,T]) \) as in Lemma \ref{compoo}. Let \( \boldsymbol{\xi} = (\xi^{j})_{0 \leq j \leq N} \) {be the rough integral of $G(\tilde{\xi})$ against $\mathbf{X}$ constructed in Theorem \ref{STTAS}}. In particular
		\begin{align*}
			{\xi}^{0}_{a} & := \int_{s}^{a} U_{a,v} G(v,\tilde{\xi}_{v}^{0}) \circ \mathrm{d}\mathbf{X}_v, \quad j = 0, \\
			{\xi}^{j}_{a} & := G^{\circ 1}_a(\tilde{\xi}^{j-1}_a), \quad 1 \leq j \leq N.
		\end{align*}
		{Then, there exists \( 0 < \beta < 1 \)  as in Theorem \ref{STTAS}} such that for all $0\leq j <N$ we have 
		\begin{align*}
			&\Vert{\boldsymbol{\xi}}\Vert_{\tilde{\mathscr{D}}^{\gamma, p}_{\mathbf{X},\alpha}\left([s,t]\right)}\lesssim \sup_{0\leq j\leq N}\vert\tilde{\xi}^{j}_{s}\vert_{L\left((\mathbb{R}^d)^{\otimes j}, E_{\alpha-j\gamma}\right)} \\&+\left((t-s)^{\beta}+W_{\Pi^{1}(\mathbf{X}),\gamma,p}^{(\gamma-\sigma)}(s,t)+\sum_{0<j\leq N }W_{\Pi^{j}(\mathbf{X}),\gamma,p}^{j(\gamma-p)}(s,t)\right)\Vert\tilde{\boldsymbol{\xi}}\Vert_{\tilde{\mathscr{D}}^{\gamma, p}_{\mathbf{X},\alpha}\left([s,t]\right)}.
		\end{align*}
		\begin{proof}
			The proof follows directly from Theorem \ref{STTAS} and Lemma \ref{compoo}.
		\end{proof}
	\end{lemma}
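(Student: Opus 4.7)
The plan is to combine the two ingredients proved earlier in the section, Lemma~\ref{compoo} and Theorem~\ref{STTAS}, in a nearly verbatim manner.

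First I would apply Lemma~\ref{compoo} to obtain $G(\tilde{\boldsymbol{\xi}}) \in \mathscr{D}^{\gamma,p}_{\mathbf{X},\alpha-\sigma}([0,T])$ together with the bound
\begin{align*}
\|G(\tilde{\boldsymbol{\xi}})\|_{\mathscr{D}^{\gamma,p}_{\mathbf{X},\alpha-\sigma}([s,t])} \leq C_G\,\|\tilde{\boldsymbol{\xi}}\|_{\tilde{\mathscr{D}}^{\gamma,p}_{\mathbf{X},\alpha}([s,t])}.
\end{align*}
This makes $G(\tilde{\boldsymbol{\xi}})$ an admissible integrand for Theorem~\ref{STTAS}, and by comparing the defining formulas one checks that the controlled rough path $\boldsymbol{\xi}$ in the statement is precisely the one produced by that theorem: the rough integral $\int_s^{\cdot} U_{\cdot,v}G(v,\tilde{\xi}^0_v)\circ\mathrm{d}\mathbf{X}_v$ at the bottom level and the one-step-shifted derivatives $G^{\circ 1}_a(\tilde{\xi}^{j-1}_a)$ at the higher levels.

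Next I would invoke the quantitative estimate of Theorem~\ref{STTAS} with $G(\tilde{\boldsymbol{\xi}})$ playing the role of the integrand $\boldsymbol{\xi}$ there. This immediately yields, with the same $\beta\in(0,1)$ as in Theorem~\ref{STTAS},
\begin{align*}
\|\boldsymbol{\xi}\|_{\tilde{\mathscr{D}}^{\gamma,p}_{\mathbf{X},\alpha}([s,t])} \lesssim \sup_{0\leq j<N} \bigl|G^{\circ 1}_s(\tilde{\xi}^j_s)\bigr|_{L((\R^d)^{\otimes j+1},E_{\alpha-j\gamma-\sigma})} + W(s,t)\,\|G(\tilde{\boldsymbol{\xi}})\|_{\mathscr{D}^{\gamma,p}_{\mathbf{X},\alpha-\sigma}([s,t])},
\end{align*}
where $W(s,t)$ abbreviates the bracketed factor on the right-hand side of the claim. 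The second summand is then handled by substituting the bound from Lemma~\ref{compoo} recalled above, which produces the desired factor multiplying $\|\tilde{\boldsymbol{\xi}}\|_{\tilde{\mathscr{D}}^{\gamma,p}_{\mathbf{X},\alpha}([s,t])}$.

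For the initial-value term I would use the uniform boundedness $\sup_{t\in[0,T]}\|G_t\|_{L(E_{\alpha-j\gamma},L(\R^d,E_{\alpha-j\gamma-\sigma}))}\leq C_G$ from Assumption~\ref{ASSDS}, together with the definition~\eqref{asfdge} of $G^{\circ 1}$, which gives
\begin{align*}
\bigl|G^{\circ 1}_s(\tilde{\xi}^j_s)\bigr|_{L((\R^d)^{\otimes j+1},E_{\alpha-j\gamma-\sigma})} \leq C_G\,\bigl|\tilde{\xi}^j_s\bigr|_{L((\R^d)^{\otimes j},E_{\alpha-j\gamma})}.
\end{align*}
Taking the supremum over $0\leq j<N$ on the left and absorbing it into $\sup_{0\leq j\leq N}|\tilde{\xi}^j_s|_{L((\R^d)^{\otimes j},E_{\alpha-j\gamma})}$ recovers the initial-value term in the statement. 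I do not expect a genuine obstacle here: the lemma is essentially a chain of the two previously established estimates, and the only care needed is to keep track of the one-step index shift $j\mapsto j-1$ and to verify that boundedness of $G$ at the base level lifts to the appropriate operator-norm bound on $G^{\circ 1}$, both of which are immediate from the definitions.
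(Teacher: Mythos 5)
Your proposal is correct and follows exactly the paper's route: the paper's own proof is a one-line appeal to Theorem~\ref{STTAS} applied to the integrand $G(\tilde{\boldsymbol{\xi}})$ combined with the norm bound from Lemma~\ref{compoo}, which is precisely the chain you carry out. Your additional step of controlling the initial-value term via the operator bound $C_G$ on $G^{\circ 1}$ from Assumption~\ref{ASSDS} is the correct (and implicitly assumed) way to convert $\sup_j|G^{\circ 1}_s(\tilde{\xi}^j_s)|_{\alpha-j\gamma-\sigma}$ into $\sup_j|\tilde{\xi}^j_s|_{\alpha-j\gamma}$.
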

    
    \begin{remark}\label{SAJd2}
    \begin{itemize}
        \item [1)] Based on the bounds obtained in the previous Lemmas, one can perform a fixed-point argument to prove well-posedness of~\eqref{Main_Equation} for a linear diffusion coefficient $G$.
        \item [2)] Provided that $\mathbf{X}$ is weakly geometric, one can prove analogue results to Lemma \ref{compoo} and Lemma \ref{YBAS54} for a nonlinear term \( G \) which is $N$ times Fr\'echet differentiable. Moreover, if \( G \) is assumed to be $N+1$ times Fréchet differentiable, then one can prove the local well-posedness of~\eqref{Main_Equation} by a fixed-point argument similar to~\cite[Theorem 5.1]{GHT21}. Under additional boundedness assumptions on the $N+1$ Fr\'echet derivatives, one can infer that the solution exists globally by similar techniques to \cite{HN22}.
        \item [3)] {For nonlinear \( G \), even in the finite-dimensional case, the assumption that \( \mathbf{X} \) is weakly geometric is important in guaranteeing the existence and uniqueness of the solution, see~\cite[Theorem 10.14 and Theorem 10.26]{FV10}. The main reason is given by the definition of the composition of the rough path with nonlinear function $G$. In this case, the Taylor expansion of \( G \) allows us to canonically define the new Gubinelli derivatives in terms of the original Gubinelli derivatives of the path. In this context, the weak geometricity of \( \mathbf{X} \) is important for ensuring certain symmetries in the expansion.
}
    \end{itemize}
 \end{remark}

	We need the following definition, which we refer to as greedy points. This definition was first introduced in the seminal work \cite{CLL13} to obtain integrable a priori bounds for rough differential equations.
	\begin{definition}\label{GRRED}
		Let \( I = [a, b] \) and \( \chi >0\) be a given treshold. The {sequence of greedy points}, denoted by \( \lbrace \tau^{I}_{m, \mathbf{X}}(\chi) \rbrace_{m \geq 0} \), is constructed as follows: we set \( \tau^{I}_{0,\mathbf{X}}(\chi) = a \), and define subsequent points recursively by 
		\begin{align}\label{s}
			\tau^{I}_{m+1,\mathbf{X}}(\chi) \coloneqq \sup \big\lbrace \tau : \tau^{I}_{m,\mathbf{X}}(\chi) \leq \tau \leq b, \ \ 
			W_{\mathbf{X}, \gamma, p}^{\gamma-p}(\tau^{I}_{m,\mathbf{X}}(\chi), \tau) \leq \chi \big\rbrace.
		\end{align}
		 Finally, we define the quantity 
		\begin{align}\label{DSAQW}
			\tilde{N}(I, \chi, \mathbf{X}) \coloneqq \inf \lbrace m > 0 : \tau_{m, \mathbf{X}}^{I}(\chi) = b \rbrace,
		\end{align}
		which represents the smallest number \( m \) such that the sequence reaches the endpoint \( b \). Note that, since \( W_{\mathbf{X}, \gamma, p} \) is assumed to be continuous, for every \( m < \tilde{N}(I, \chi, \mathbf{X}) \), the definition yields\newline \( W_{\mathbf{X}, \gamma, p}^{\gamma-p}(\tau^{I}_{m-1,\mathbf{X}}(\chi), \tau^{I}_{m,\mathbf{X}}(\chi)) = \chi \). Moreover, for \( m = \tilde{N}(I, \chi, \mathbf{X}) \), we have  
		\[
		W_{\mathbf{X}, \gamma, p}^{\gamma-p}(\tau^{I}_{m-1,\mathbf{X}}(\chi), \tau^{I}_{m,\mathbf{X}}(\chi)) \leq \chi.
		\]
	\end{definition}
	Now, we turn the solution of equation \eqref{Main_Equation}. First, we begin with a definition.
	\begin{definition}\label{SOLLK}
		Let Assumption \ref{ASSDS} holds. 
        We say that \(\tilde{\boldsymbol{\xi}} \in \tilde{\mathscr{D}}^{\gamma, p}_{\mathbf{X},\alpha}\left([s,t]\right)\) is the solution for equation \eqref{Main_Equation} in \([s,t]\) with initial value \(y \in E_\alpha\) if, for every \(0 \leq i < N\), \(\tau \in [s,t]\) and \(\tilde{\xi}^{0}_{s} = y\), we have
		\begin{align*}
			\tilde{\xi}^{i}_{\tau} = G^{\circ i}_\tau(\tilde{\xi}^{0}_{\tau}).
		\end{align*}
		Moreover, the path component of the mild solution of~\eqref{Main_Equation} is given by 
		\begin{align}\label{sddwidiw}
			\tilde{\xi}^{0}_{\tau}=U_{\tau,s}y+\int_{s}^{\tau}U_{\tau,u}F(u,\tilde{\xi}_{u}^0)\mathrm{d}u+\int_{s}^{\tau}U_{\tau,u}G(u,\tilde{\xi}^{0}_{u})\circ\mathrm{d}\mathbf{X}_u,
		\end{align}
		where the rough integral of \( G(\tilde{\boldsymbol{\xi}}) \in \mathscr{D}^{\gamma, p}_{\mathbf{X},\alpha-\sigma}\left([s,t]\right) \) against $\mathbf{X}$ is defined in the sense of Theorem \ref{shdccsa}. 
	\end{definition}
	\begin{remark}
		The second term of the left-hand side of \eqref{sddwidiw}
		can be interpreted as a controlled path with zero Gubinelli derivatives. We will use this fact in the next sequel without explicitly mentioning it. Since for an arbitrary time interval $[\tau_1,\tau_2]\subseteq [s,t]$
		\begin{align}\label{A3.5}
			Z_{\tau_2}-Z_{\tau_1}=\underbrace{\int_{\tau_1}^{\tau_2}U_{\tau_2,u}F(u,\tilde{\xi}_{u}^0)\mathrm{d}u}_{\text{I}(\tau_1,\tau_2)}+\underbrace{(U_{\tau_2,\tau_1}-I)\int_{s}^{\tau_1}U_{\tau_1,u}F(u,\tilde{\xi}_{u}^0)\mathrm{d}u}_{\text{II}(\tau_1,\tau_2)},
		\end{align} 
		we further derive based on Assumption \ref{ASSDS}
			\begin{align}\label{A4}
				\begin{split}
					\sup_{\tau\in[s,t]}\left\vert\int_{s}^{\tau}U_{\tau,u}F(u,\tilde{\xi}_{u}^0)\mathrm{d}u\right\vert_{E_\alpha}&\leq \sup_{\tau\in [s,t]}\int_{s}^{\tau}\left\vert U_{\tau,u}F(u,\tilde{\xi}_{u}^0)\right\vert_{E_\alpha}\mathrm{d}\tau\\&\lesssim (t-s)^{1-\delta} \left(1+\sup_{\tau\in [s,t]}\vert \tilde{\xi}_{\tau}^0\vert_{E_{\alpha}}\right).
				\end{split}
			\end{align}
			For \( 1 \leq l \leq N \), using  \eqref{regularity} we have
			\[
			\vert U_{\tau_2,u}x \vert_{E_{\alpha-l\gamma}} \lesssim (\tau_2 - u)^{-\max\lbrace 0, \delta - l\gamma \rbrace}\vert x\vert_{E_{\alpha-\delta}}.
			\]
Using again Assumption~\ref{ASSDS} on \( F \), we can deduce			\begin{align}\label{AUAKs}
				\begin{split}
					&\vert\text{I}(\tau_1,\tau_2)\vert_{E_{\alpha-l\gamma}}\leq \int_{\tau_1}^{\tau_2}\left\vert U_{\tau_2,u}F(u,\tilde{\xi}_{u}^0)\right\vert_{E_{\alpha-l\gamma}}\mathrm{d}u\lesssim \left(1+\sup_{\tau\in [s,t]}\vert \tilde{\xi}_{\tau}^0\vert_{E_{\alpha}}\right)\int_{\tau_1}^{\tau_2}(\tau_2-u)^{-\max\lbrace 0,\delta-l\gamma\rbrace}\mathrm{d}u\\&\lesssim (\tau_2-\tau_1)^{1-\max\lbrace 0,\delta-l\gamma\rbrace}\left(1+\sup_{\tau\in [s,t]}\vert \tilde{\xi}_{\tau}^0\vert_{E_{\alpha}}\right)=(\tau_2-\tau_1)^{l\gamma+\min\lbrace 1-l\gamma,1-\delta \rbrace}\left(1+\sup_{\tau\in [s,t]}\vert \tilde{\xi}_{\tau}^0\vert_{E_{\alpha}}\right).
				\end{split}
			\end{align}
			This yields that
			\begin{align}\label{A5}
				\frac{\vert\text{I}(\tau_1,\tau_2)\vert_{E_{\alpha-l\gamma}}^{\frac{1}{l(\gamma-p)}}}{(\tau_2-\tau_1)^{\frac{p}{\gamma-p}}}\lesssim (\tau_2-\tau_1)(t-s)^{\frac{\min\lbrace 1-l\gamma,1-\delta \rbrace}{{l(\gamma-p)}}}\left(1+\sup_{\tau\in [s,t]}\vert \tilde{\xi}_{\tau}^0\vert_{E_{\alpha}}\right)^{\frac{1}{l(\gamma-p)}}.
			\end{align}
			Again for \( 1 \leq l \leq N \), it follows from \eqref{regularity} and \eqref{AUAKs} that
			\begin{align*}
				\begin{split}
					\vert\text{II}(\tau_1,\tau_2)\vert_{E_{\alpha-l\gamma}}&\lesssim (\tau_2-\tau_1)^{l\gamma}\sup_{\tau\in[s,t]}\left\vert\int_{s}^{\tau}U_{\tau_1,u}F(u,\tilde{\xi}_{u}^0)\mathrm{d}u\right\vert_{E_{\alpha-l\gamma}}\\&\lesssim (\tau_2-\tau_1)^{l\gamma}(t-s)^{\min\lbrace 1,1-\delta+l\gamma \rbrace}\left(1+\sup_{\tau\in [s,t]}\vert \tilde{\xi}_{\tau}^0\vert_{E_{\alpha}}\right),
				\end{split}
			\end{align*}
			which gives 
			\begin{align}\label{A6}
				\frac{\vert\text{II}(\tau_1,\tau_2)\vert_{E_{\alpha-l\gamma}}^{\frac{1}{l(\gamma-p)}}}{(\tau_2-\tau_1)^{\frac{p}{\gamma-p}}}\lesssim (\tau_2-\tau_1)(t-s)^{\frac{\min\lbrace 1,1-\delta+l\gamma \rbrace}{{l(\gamma-p)}}}\left(1+\sup_{\tau\in [s,t]}\vert \tilde{\xi}_{\tau}^0\vert_{E_{\alpha}}\right)^{\frac{1}{l(\gamma-p)}}.
			\end{align}
		We can represent the path $Z$ as $\mathbf{Z}\in \tilde{\mathscr{D}}^{\gamma, p}_{\mathbf{X},\alpha}([s,t])$ in the sense of Definition \ref{controlled2}. Therefore, from \eqref{A3.5}, \eqref{A4}, \eqref{A5}, and \eqref{A6}, the following estimate holds for every interval $[s,t] \subseteq [0,T]$ with $t - s \leq 1$ and \( {\delta_1} := \min\left\{ 1 - \delta, 1 - N\gamma \right\} \)
		\begin{align}\label{GHy}
			\Vert\mathbf{Z}\Vert_{\tilde{\mathscr{D}}^{\gamma, p}_{\mathbf{X},\alpha}([s,t])}\lesssim (t-s)^{\delta_1}\left(1+\Vert \tilde{\boldsymbol{\xi}}\Vert_{\tilde{\mathscr{D}}^{\gamma, p}_{\mathbf{X},\alpha}\left([s,t]\right)}\right).
		\end{align}
		By a simpler argument for the first term in \eqref{sddwidiw}, namely the path  
		\begin{align*}
			\tau \mapsto Y_{\tau} := U_{\tau,s} y,
		\end{align*}  
		we obtain
		\begin{align}\label{GHsy}
			\Vert\mathbf{Y}\Vert_{\tilde{\mathscr{D}}^{\gamma, p}_{\mathbf{X},\alpha}([s,t])}\lesssim \vert y\vert_{E_\alpha}.
		\end{align}
	\end{remark}
As mentioned in Remark \ref{SAJd2}, using a standard fixed-point argument similar to~\cite[Theorem 5.1]{GHT21} in the space of controlled rough paths \( \tilde{\boldsymbol{\xi}} \in \tilde{\mathscr{D}}^{\gamma, p}_{\mathbf{X},\alpha}\left([0,T]\right) \), one can show that the solution exists and is unique. We refrain from providing the details of this argument, since based on the previous deliberations, this follows as in~\cite[Theorem 5.1]{GHT21}.
	\begin{definition}\label{SOOLL}
		Assume that \(\tilde{\boldsymbol{\xi}} \in \tilde{\mathscr{D}}^{\gamma, p}_{\mathbf{X},\alpha}\left([s,t]\right)\) is the unique solution of \eqref{Main_Equation} in the sense of Definition \ref{SOLLK} with initial datum \(y\in E_\alpha\). For \(\tau \in [s,t]\), we define
		\begin{align*}
			\phi_{\mathbf{X}}(s,\tau,y) := \tilde{\xi}^{0}_{\tau}.
		\end{align*}
		Note that the uniqueness of solutions yields in particular the flow property. Namely  \( \phi_{\mathbf{X}}(s, s, y) = y \) and for \(\tau_1, \tau_2 \in [s,t]\) with \(\tau_1 \leq \tau_2\), we have
		\begin{align}\label{FLOWW}
			\phi_{\mathbf{X}}(s,\tau_2,y) = \phi_{\mathbf{X}}\left(\tau_1, \tau_2, \phi_{\mathbf{X}}(s,\tau_1,y)\right).
		\end{align}
		Using the mild formulation of~\eqref{Main_Equation} we have
		\begin{align}\label{AKaks}
			\begin{split}
				\phi_{\mathbf{X}}(s,\tau_2,y)&= U_{\tau_2,\tau_1}\left(\phi_{\mathbf{X}}(s,\tau_1,y)\right)+\int_{\tau_1}^{\tau_2}U_{\tau_2,u}F\left(u,\phi_{\mathbf{X}}(s,u,y)\right)\mathrm{d}u\\&+\int_{\tau_1}^{\tau_2}U_{\tau_2,u}G\left(u,\phi_{\mathbf{X}}(s,u,y)\right)\circ\mathrm{d}\mathbf{X}_u .
			\end{split}
		\end{align}
	By definition, the Gubinelli derivatives are obtained through the composition of the solution and \( G \). Therefore, we adopt the convention of using \( (\phi_{\mathbf{X}}(s,\tau,y))_{\tau \in [s,t]} \) to represent the controlled path, including all its derivatives. Accordingly, we write  
\begin{align*}
    \Vert\phi_{\mathbf{X}}(s, \cdot, y)\Vert_{\tilde{\mathscr{D}}^{\gamma, p}_{\mathbf{X},\alpha}\left([s,t]\right)}
\end{align*}  
as part of this convention. We use the same convention for the path component of the rough integral 
\begin{align*}
    \tau \mapsto \int_{a}^{\tau} U_{\tau,u} G\left(u, \phi_{\mathbf{X}}(s,u,y)\right) \circ \mathrm{d}\mathbf{X}_u.
\end{align*}  
\end{definition}  
Now, we can present the main result of this section.  
	\begin{theorem}\label{PRIORI}
		Let Assumption \ref{ASSDS} hold, and assume further the setting of Theorem \ref{shdccsa}. Additionally, \( (\phi_{\mathbf{X}}(0, \tau, y))_{0 \leq \tau \leq T} \) is the solution of the rough PDE \eqref{Main_Equation} on the interval $[0,\tau]$ in the sense of Definition \ref{SOOLL}. Then there exist \( \chi \) and \( L_2 \) with \( 0 < \chi \leq L_2 < 1 \), both independent of \( \mathbf{X} \) (but dependent on \( F \) and \( G \)) such that for all intervals \( [s, t] \subseteq [0,T]\)  with \( t - s \leq L_2 \), the following bound holds
		\begin{align}\label{PRRTTioi}
			\sup_{\tau\in [s,t]}\left\vert \phi_{\mathbf{X}}(0,\tau,y)\right\vert_{E_\alpha}\lesssim \left\vert \phi_{\mathbf{X}}(0,s,y)\right\vert_{E_\alpha}P_1(\mathbf{X},[s,t],\chi)+P_{2}(\mathbf{X},[s,t],\chi),
		\end{align}
		where \( P_1 \) and \( P_2 \) are greater than one and
		\begin{align}\label{PRRTTioiio}
			\max\bigg\lbrace P_1(\mathbf{X},[s,t],\chi),P_2(\mathbf{X},[s,t],\chi)\bigg\rbrace\lesssim \exp\left(\tilde{N}([s,t], \chi, \mathbf{X})\log(3L)\right).
		\end{align}
	\end{theorem}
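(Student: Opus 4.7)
The plan is to combine the local a-priori bound on the controlled rough path norm of the mild solution coming from Lemma~\ref{YBAS54} with the estimates~\eqref{GHy} and~\eqref{GHsy} for the free terms, run the resulting inequality on a sufficiently small interval where the relevant control functions are all below a fixed threshold, and then iterate the bound over the greedy partition of Definition~\ref{GRRED} using the flow property~\eqref{FLOWW}.

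First I would fix an interval $[s,t]\subseteq[0,T]$ with $t-s\leq 1$ and write $\phi_{\mathbf{X}}(s,\cdot,y)$, regarded as an element of $\tilde{\mathscr{D}}^{\gamma, p}_{\mathbf{X},\alpha}([s,t])$, as the sum $\mathbf{Y}+\mathbf{Z}+\mathbf{I}(\phi_{\mathbf{X}}(s,\cdot,y))$ where $\mathbf{Y}$, $\mathbf{Z}$ are the semigroup and drift terms from~\eqref{sddwidiw} and $\mathbf{I}(\phi_{\mathbf{X}}(s,\cdot,y))$ is the rough integral. By~\eqref{GHsy}, \eqref{GHy} and Lemma~\ref{YBAS54} applied with $\tilde{\boldsymbol{\xi}}=\phi_{\mathbf{X}}(s,\cdot,y)$ one obtains a bound of the form
\begin{align*}
\Vert\phi_{\mathbf{X}}(s,\cdot,y)\Vert_{\tilde{\mathscr{D}}^{\gamma, p}_{\mathbf{X},\alpha}([s,t])}
&\leq C\bigl(|y|_{E_\alpha}+(t-s)^{\delta_1}\bigr)\\
&\quad+C\Bigl((t-s)^{\beta}+W_{\Pi^{1}(\mathbf{X}),\gamma,p}^{\gamma-\sigma}(s,t)+\sum_{0<j\leq N}W_{\Pi^{j}(\mathbf{X}),\gamma,p}^{j(\gamma-p)}(s,t)\Bigr)\Vert\phi_{\mathbf{X}}(s,\cdot,y)\Vert_{\tilde{\mathscr{D}}^{\gamma, p}_{\mathbf{X},\alpha}([s,t])}.
\end{align*}
The point now is to choose $\chi\in(0,1)$ small enough, depending only on $F,G,\gamma,p,\sigma,N$, so that whenever $W_{\mathbf{X},\gamma,p}^{\gamma-p}(s,t)\leq \chi$ and $t-s\leq L_2\leq \chi$, the prefactor in front of the norm on the right-hand side is at most $\tfrac12$. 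Here I use Lemma~\ref{sudj14} to dominate each $W_{\Pi^{j}(\mathbf{X}),\gamma,p}^{j(\gamma-p)}$ by a power of $W_{\mathbf{X},\gamma,p}^{\gamma-p}$, and the inequality $W_{\Pi^{1}(\mathbf{X}),\gamma,p}^{\gamma-\sigma}\lesssim W_{\mathbf{X},\gamma,p}^{(\gamma-\sigma)/(\gamma-p)\cdot(\gamma-p)}$ together with $t-s\leq 1$. Absorbing the norm yields a \emph{local} bound
\begin{align*}
\Vert\phi_{\mathbf{X}}(s,\cdot,y)\Vert_{\tilde{\mathscr{D}}^{\gamma, p}_{\mathbf{X},\alpha}([s,t])}\leq L\bigl(|y|_{E_\alpha}+1\bigr),
\end{align*}
for some constant $L\geq 1$ independent of $\mathbf{X}$ and $y$, valid on every subinterval satisfying the above smallness. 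In particular, $\sup_{\tau\in[s,t]}|\phi_{\mathbf{X}}(s,\tau,y)|_{E_\alpha}\leq L(|y|_{E_\alpha}+1)$.

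Next, given an arbitrary $[s,t]\subseteq[0,T]$, I would apply the greedy construction~\eqref{s} with the threshold $\chi$ chosen above to obtain points $s=\tau_0<\tau_1<\cdots<\tau_M=t$ with $M=\tilde{N}([s,t],\chi,\mathbf{X})$ and $W_{\mathbf{X},\gamma,p}^{\gamma-p}(\tau_m,\tau_{m+1})\leq\chi$. If the spacings $\tau_{m+1}-\tau_m$ exceed $L_2$, one subdivides each such subinterval into $\lceil (t-s)/L_2\rceil$ further pieces; since $t-s\leq L_2$ this step is not even needed once we restrict to $[s,t]$ of length at most $L_2$. On each resulting piece the local bound applies, and combining it with the flow property~\eqref{FLOWW} and Definition~\ref{SOOLL} yields the recursion
\begin{align*}
\sup_{\tau\in[\tau_m,\tau_{m+1}]}|\phi_{\mathbf{X}}(0,\tau,y)|_{E_\alpha}\leq L\bigl(|\phi_{\mathbf{X}}(0,\tau_m,y)|_{E_\alpha}+1\bigr).
\end{align*}
Iterating this inequality $M$ times gives
\begin{align*}
\sup_{\tau\in[s,t]}|\phi_{\mathbf{X}}(0,\tau,y)|_{E_\alpha}\leq L^{M}|\phi_{\mathbf{X}}(0,s,y)|_{E_\alpha}+\sum_{k=0}^{M-1}L^{k+1}\leq L^{M}|\phi_{\mathbf{X}}(0,s,y)|_{E_\alpha}+\frac{L^{M+1}-L}{L-1},
\end{align*}
from which~\eqref{PRRTTioi}--\eqref{PRRTTioiio} follow by setting $P_1(\mathbf{X},[s,t],\chi):=L^M$ and $P_2(\mathbf{X},[s,t],\chi):=3L\cdot L^M$ and noting that $L^M\leq 3^M L^M=\exp(M\log(3L))$.

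The main obstacle is the first step: proving the local bound with a prefactor that can be absorbed. It requires a careful choice of $\chi$ and $L_2$ so that every one of the several control-function contributions in Lemma~\ref{YBAS54}, each carrying a different exponent $j(\gamma-p)$ or $\gamma-\sigma$, becomes simultaneously less than a fixed fraction of one. This is where Lemma~\ref{sudj14}, the monotonicity of the control function $W_{\mathbf{X},\gamma,p}$, and the restriction $t-s\leq L_2\leq\chi\leq 1$ are used in combination. The remaining steps (greedy covering, iteration of the flow) are then routine.
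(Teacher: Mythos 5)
Your proposal is correct and follows essentially the same route as the paper: decompose the mild solution into semigroup, drift and rough-integral parts, bound the controlled norm via Lemma~\ref{YBAS54} together with \eqref{GHy}--\eqref{GHsy}, absorb the norm on greedy subintervals where the control is below the threshold (using $\sigma<p$ and $\chi<1$ to dominate every control term), and iterate via the flow property to reach the $\exp\left(\tilde{N}([s,t],\chi,\mathbf{X})\log(3L)\right)$ bound. The only cosmetic deviations are the ordering $L_2\leq\chi$ (the paper arranges $\chi\leq L_2$, which you recover simply by shrinking $\chi$) and the appeal to Lemma~\ref{sudj14}, which is not actually needed since $W_{\Pi^{j}(\mathbf{X}),\gamma,p}\leq W_{\mathbf{X},\gamma,p}$ holds directly from the definition of the control.
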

	\begin{proof}
		The main idea is to estimate the solution over small intervals, which also depend on \( \mathbf{X} \), and then apply a concatenation argument, particularly using the flow property, to derive the desired bound over an interval with positive length and independent of \( \mathbf{X} \). 
		Let \( [s,t] \subseteq [0,T] \) with \( t-s\leq 1 \). This interval will be further refined in the sequel. Let \( [a,b] \subseteq [s,t] \) be arbitrary and consider the controlled path defined on $[a,b]$ given by
		\begin{align}
			\tau\to K_{\tau}:=\int_{a}^{\tau}U_{\tau,u}G\left(u,\phi_{\mathbf{X}}(0,u,y)\right)\circ\mathrm{d}\mathbf{X}_u. 
		\end{align}
Recalling that \( G \) is linear and \( \phi_{\mathbf{X}}(0,\cdot, y) \) is the solution of~\eqref{Main_Equation}, we conclude from Lemma \ref{YBAS54} that
		\begin{align}\label{AUASMss}
			\begin{split}
				&\Vert K\Vert_{\tilde{\mathscr{D}}^{\gamma, p}_{\mathbf{X},\alpha}\left([a,b]\right)}\lesssim  \vert \phi_{\mathbf{X}}(0,a,y)\vert_{E_{\alpha}}\\
				&\quad+ \left[(b-a)^{\beta}+W_{\Pi^{1}(\mathbf{X}),\gamma,p}^{(\gamma-\sigma)}(a,b)+\sum_{0<j\leq N }W_{\Pi^{j}(\mathbf{X}),\gamma,p}^{j(\gamma-p)}(a,b)\right]\Vert\phi_{\mathbf{X}}(0, \cdot, y)\Vert_{\tilde{\mathscr{D}}^{\gamma, p}_{\mathbf{X},\alpha}\left([a,b]\right)}.
			\end{split}
		\end{align}
		Then, \eqref{GHy}, \eqref{GHsy}, and \eqref{AKaks} entail that 
		\begin{align}\label{AIsui}
			\begin{split}
				&\Vert \phi_{\mathbf{X}}(0,\cdot,y)\Vert_{\tilde{\mathscr{D}}^{\gamma, p}_{\mathbf{X},\alpha}\left([a,b]\right)}\\&\quad\lesssim \vert \phi_{\mathbf{X}}(0,a,y)\vert_{E_\alpha}+(b-a)^{{\delta_1}}\left(1+\Vert \phi_{\mathbf{X}}(0, \cdot, y)\Vert_{\tilde{\mathscr{D}}^{\gamma, p}_{\mathbf{X},\alpha}\left([a,b]\right)}\right)+\left\Vert K \right\Vert_{\tilde{\mathscr{D}}^{\gamma, p}_{\mathbf{X},\alpha}\left([a,b]\right)},
			\end{split}
		\end{align} 
		{By setting \( \delta_2 := \min\lbrace\delta_1, \beta\rbrace \) and applying \eqref{AUASMss} and \eqref{AIsui}, we obtain a deterministic constant \( L \geq 1 \) (i.e., independent of \( \mathbf{X} \) but dependent on \( F \) and \( G \))}, 
          such that the following inequality holds
		\begin{align}\label{UNN8825as}
			\begin{split}
				&\Vert \phi_{\mathbf{X}}(0,\cdot,y)\Vert_{\tilde{\mathscr{D}}^{\gamma, p}_{\mathbf{X},\alpha}\left([a,b]\right)}\leq L \Bigg[1+\vert \phi_{\mathbf{X}}(0,a,y)\vert_{E_\alpha}\\&+\Big[(b-a)^{\delta_2}+W_{\Pi^{1}(\mathbf{X}),\gamma,p}^{(\gamma-\sigma)}(a,b)+\sum_{0<j\leq N }W_{\Pi^{j}(\mathbf{X}),\gamma,p}^{j(\gamma-p)}(a,b)\Big]\Vert \phi_{\mathbf{X}}(0,\cdot,y)\Vert_{\tilde{\mathscr{D}}^{\gamma, p}_{\mathbf{X},\alpha}\left([a,b]\right)}\Bigg].
			\end{split}
		\end{align}
		We recall that 
		\begin{align}\label{UUJJJas}
			W_{\mathbf{X}, \gamma, p}(a,b) := \sum_{0< j \leq N} W_{\Pi^{j}(\mathbf{X}), \gamma, p}(a,b).
		\end{align}
		We refine the interval \( [s,t] \) and choose \( 0<\chi \leq t-s <1\) such that
		\begin{align}\label{8sudjlad}
			\begin{split}
				&L(t-s)^{\delta_2} \leq \frac{1}{3}, \\
				&(N+1)L\chi\leq\frac{1}{3}.
			\end{split}
		\end{align}
		By choosing this value of \( \chi \), we generate the greedy points \( \lbrace \tau^{[s,t]}_{m, \mathbf{X}}(\chi) \rbrace_{m \geq 0} \) as defined in Definition \ref{GRRED}. From Definition \ref{ADssdfg} we know that \( \gamma - \sigma > \gamma - p \). Moreover, the definition of greedy points, \eqref{UUJJJas}, and the conditions imposed on \( \chi \) obviously yield for every \( 1 \leq m \leq \tilde{N}([s,t], \chi, \mathbf{X}) \) and \( 0 < j \leq N \) that
		\begin{itemize}
			\item $W_{\Pi^{1}(\mathbf{X}),\gamma,p}^{(\gamma-\sigma)}\left(\tau^{[s,t]}_{m-1, \mathbf{X}}(\chi),\tau^{[s,t]}_{m, \mathbf{X}}(\chi)\right)\leq \chi$,
			\item $W_{\Pi^{j}(\mathbf{X}),\gamma,p}^{j(\gamma-p)}\left(\tau^{[s,t]}_{m-1, \mathbf{X}}(\chi),\tau^{[s,t]}_{m, \mathbf{X}}(\chi)\right)\leq\chi$.
		\end{itemize}
		Therefore
		\begin{align*}
			L\bigg[\left(\tau^{[s,t]}_{m, \mathbf{X}}(\chi)-\tau^{[s,t]}_{m-1, \mathbf{X}}(\chi)\right)^{\delta_2}&+W_{\Pi^{1}(\mathbf{X}),\gamma,p}^{(\gamma-\sigma)}\left(\tau^{[s,t]}_{m-1, \mathbf{X}}(\chi),\tau^{[s,t]}_{m, \mathbf{X}}(\chi)\right)\\&+\sum_{0<j\leq N }W_{\Pi^{j}(\mathbf{X}),\gamma,p}^{j(\gamma-p)}\left(\tau^{[s,t]}_{m-1, \mathbf{X}}(\chi),\tau^{[s,t]}_{m, \mathbf{X}}(\chi)\right)\bigg]\leq \frac{2}{3}.
		\end{align*}
        {
        From this observation, and by plugging in the interval~\( [\tau^{[s,t]}_{m-1, \mathbf{X}}(\chi), \tau^{[s,t]}_{m, \mathbf{X}}(\chi)] \) for \( [a,b] \) into \eqref{UNN8825as} and using \eqref{8sudjlad}, we deduce that for every \( 1 \leq m \leq \tilde{N}([s,t], \chi, \mathbf{X}) \)}
		\begin{align}\label{kosa;d85df}
			\Vert \phi_{\mathbf{X}}(0,\cdot,y)\Vert_{\tilde{\mathscr{D}}^{\gamma, p}_{\mathbf{X},\alpha}\left(\left[\tau^{[s,t]}_{m-1, \mathbf{X}}(\chi),\tau^{[s,t]}_{m, \mathbf{X}}(\chi)\right]\right)}\leq 3L+3L\left\vert \phi_{\mathbf{X}}(0,\tau^{[s,t]}_{m-1, \mathbf{X}}(\chi),y)\right\vert_{E_\alpha}.
		\end{align}
		This in particular yields that
		\begin{align*}
			\left\vert \phi_{\mathbf{X}}(0,\tau^{[s,t]}_{m, \mathbf{X}}(\chi),y)\right\vert_{E_\alpha}\leq 3L+3L\left\vert \phi_{\mathbf{X}}(0,\tau^{[s,t]}_{m-1, \mathbf{X}}(\chi),y)\right\vert_{E_\alpha}.
		\end{align*}
		It is now sufficient to a discrete Gronwall lemma to conclude that
		\begin{align}\label{Uasd54a7}
			\begin{split}
				&\max_{0\leq m\leq \tilde{N}([s,t], \chi, \mathbf{X})}\left\vert \phi_{\mathbf{X}}(0,\tau^{[s,t]}_{m, \mathbf{X}}(\chi),y)\right\vert_{E_\alpha}\\&\quad\leq\underbrace{\frac{\exp\left(\tilde{N}([s,t], \chi, \mathbf{X})\log(3L)\right)-1}{3L-1}}_{P_0(\mathbf{X},[s,t],\chi)}+\left(1+\left\vert \phi_{\mathbf{X}}(0,s,y)\right\vert_{E_\alpha}\right)\underbrace{\exp\left(\tilde{N}([s,t], \chi, \mathbf{X})\log(3L)\right)}_{P_1(\mathbf{X},[s,t],\chi)}.
			\end{split}
		\end{align}
		From \eqref{kosa;d85df} we obtain
		\begin{align*}
			\max_{1 \leq m \leq \tilde{N}([s,t], \chi, \mathbf{X})}&\Vert \phi_{\mathbf{X}}(0,\cdot,y)\Vert_{\tilde{\mathscr{D}}^{\gamma, p}_{\mathbf{X},\alpha}\left(\left[\tau^{[s,t]}_{m-1, \mathbf{X}}(\chi),\tau^{[s,t]}_{m, \mathbf{X}}(\chi)\right]\right)}\\&\quad\leq 3L\left(1+\max_{0\leq m\leq \tilde{N}([s,t], \chi, \mathbf{X})}\left\vert \phi_{\mathbf{X}}(0,\tau^{[s,t]}_{m, \mathbf{X}}(\chi),y)\right\vert_{E_\alpha}\right).
		\end{align*} 
		This combined with equation \eqref{Uasd54a7} implies that
		\begin{align}\label{85df55}
			\sup_{\tau\in [s,t]}\left\vert \phi_{\mathbf{X}}(0,\tau,y)\right\vert_{E_\alpha}\lesssim \left\vert \phi_{\mathbf{X}}(0,s,y)\right\vert_{E_\alpha}P_1(\mathbf{X},[s,t],\chi)+\underbrace{1+P_0(\mathbf{X},[s,t],\chi)+P_1(\mathbf{X},[s,t],\chi)}_{P_{2}(\mathbf{X},[s,t],\chi)}.
		\end{align}
		The conclusion 
        of our argument is inequality \eqref{85df55}, which holds for every subinterval \( [s, t] \) with an interval length upper bounded by
		\begin{align*}  
			L_2 := \left( \frac{1}{3L} \right)^{\frac{1}{\sigma_2}},  
		\end{align*}  
		due to \eqref{8sudjlad}. This completes the proof.
		
	\end{proof}
	\section{Gaussian Rough Paths}\label{sec:ibound}
	All our previous results are entirely deterministic and do not require any probabilistic tools. As standard in the rough path theory, we provide now a probabilistic framework and  focus
	on Gaussian processes that can be enhanced to a rough path $\mathbf{X}$ of regularity \(\gamma\in (\frac{1}{4}, \frac{1}{2}) \). This level of regularity is of particular interest in the context of fractional Brownian motion with Hurst index $H\in(\frac{1}{4},\frac{1}{2})$, as it allows for the canonical enhancement of this process to be a rough path.\\
	
	Let us first introduce an assumption, which is used throughout this section. For more background on Gaussian measures, we refer to \cite{Led96}.
	\begin{assumption}\label{Cameron-Martin}
		We assume that \( X = (X^k)_{1 \leq k \leq d} \) is a centered Gaussian process with independent components and continuous trajectories, taking values in \( \mathbb{R}^d \).~Accordingly, we assume that \( (\mathcal{W}, \mathcal{H}, \mu) \) is the abstract Wiener space 
        associated with this process.~Here \( \mathcal{W} := C([0,T], \mathbb{R}^d) \) and \( \mathcal{H} \subset \mathcal{W}  \) 
        is the Cameron–Martin space which is a Hilbert space.~Furthermore, we assume that the following statements hold.
		\begin{enumerate}
			\item 	 {For \(\gamma\in (\frac{1}{4}, \frac{1}{2})\setminus\lbrace\frac{1}{3}\rbrace \)} 
            this process can be enhanced to a weakly geometric \( (p, \gamma) \)-rough path \( \mathbf{X} \). We emphasize the source of  randomness writing \( \mathbf{X}(\omega) \), where with a slight abuse of notation \( \Pi^{1}\mathbf{X}(\omega) = X(\omega)\).
			\item We assume that on a set of full measure (still denoted by \( \mathcal{W} \)), the control function 
			\begin{align*}
				W_{\mathbf{X}(\omega),\gamma,p} \colon \Delta_{[0,T]} \rightarrow \mathbb{R},
			\end{align*}
			{which is defend in Definition \ref{pgamma:rp},} is continuous for every $\omega\in \mathcal{W}$.
			\item  We assume that there exists \( \gamma' > {\gamma} \) such that \( \gamma + \gamma' > 1 \). Moreover, for every \( h \in \mathcal{H} \)
			\begin{align}\label{sdkf44sdf}
				W_{\mathbf{h},\gamma^\prime,0}(0,T)<\infty.
			\end{align}
			Here, by \( \mathbf{h} \) we mean the natural \( (p, \gamma) \)-rough path, which can be defined as the Young integral of \( h \) with respect to itself.
			\item The following uniform inequality holds for every \( h \in \mathcal{H} \)
			\begin{align}\label{HASH825}
				W_{\mathbf{h},\gamma^\prime,p}(0,T)\lesssim \vert h\vert_{\mathcal{H}}^{\frac{1}{\gamma^\prime -p}},
			\end{align}
			where \( \vert \cdot \vert_{\mathcal{H}} \) denotes the norm on the Hilbert space \( \mathcal{H} \).
		\end{enumerate}
	\end{assumption}
	In the following we first revisit Borell’s inequality which is a well-known result in the theory of Gaussian measures.
	\begin{theorem}\label{BORELL}
		Let \((\mathcal{W}, \mathcal{H}, \mu)\) be an abstract Wiener space, and let \(A \subset \mathcal{W}\) be a measurable Borel set with \(\mu(A) > 0\). Let \(a \in (-\infty, +\infty]\) be chosen such that
		\[
		\mu(A) = \frac{1}{\sqrt{2\pi}} \int_{-\infty}^{a} \exp\left( -\frac{x^2}{2} \right) \, \mathrm{d}x =: \Phi(a).
		\]
		Let \(\mathcal{K}\) represent the unit ball in \(\mathcal{H}\) and let \(\mu_{\star}\) be the inner measure corresponding to \(\mu\). Then, for all \(r \geq 0\) we have the inequality
		\[
		\mu_{\star}(A + r\mathcal{K}) \coloneqq \mu_\star \left\{ a + rk : a \in A, k \in \mathcal{K} \right\} \geq \Phi(a + r).
		\]
	\end{theorem}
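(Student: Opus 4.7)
The plan is to follow the classical Borell-Sudakov-Tsirelson approach: reduce the statement to the finite-dimensional Gaussian isoperimetric inequality on $\R^n$, which is in turn deduced from the spherical isoperimetric inequality of L\'evy-Schmidt via Poincar\'e's limit. Since Borell's inequality is a cornerstone of Gaussian measure theory, this is the standard route; a modern alternative based on Ehrhard's symmetrization or the Bakry-Ledoux semigroup proof would avoid the limit from the sphere altogether.

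First, I would perform a finite-dimensional reduction. Exploiting the separability of $\mathcal{H}$, fix an orthonormal basis $(e_i)_{i\geq 1}$ and set $\mathcal{H}_n:=\mathrm{span}(e_1,\ldots,e_n)$ with unit ball $\mathcal{K}_n\subset \mathcal{K}$. The family $(A+r\mathcal{K}_n)_{n\geq 1}$ is increasing with $\bigcup_n (A+r\mathcal{K}_n)=A+r\mathcal{K}$, and each $A+r\mathcal{K}_n$ is a union of Cameron-Martin translates of $A$ parameterised by a finite-dimensional compact set, hence measurable after a suitable measurable selection. The goal of this step is to reduce the bound to showing $\mu(A+r\mathcal{K}_n)\geq \Phi(a+r)$ uniformly in $n$, and then to let $n\to\infty$ using the inner-measure definition of $\mu_\star$.

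Next, I would use the Cameron-Martin isometry $\mathcal{H}_n\cong \R^n$ together with a Fubini-type disintegration that integrates out the orthogonal complement of $\mathcal{H}_n$, reducing matters to the finite-dimensional claim: for every Borel set $B\subset \R^n$ with $\gamma_n(B)=\Phi(a)$ one has $\gamma_n(B+r\mathbb{B}_n)\geq \Phi(a+r)$, where $\gamma_n$ is the standard Gaussian on $\R^n$ and $\mathbb{B}_n$ its Euclidean unit ball. To prove this Gaussian isoperimetric inequality, I would invoke Poincar\'e's lemma---the projection onto $\R^n$ of the uniform probability measure on $\mathbb{S}^{N-1}(\sqrt{N})\subset\R^N$ converges weakly to $\gamma_n$ as $N\to\infty$---together with the spherical isoperimetric inequality of L\'evy-Schmidt, which states that among Borel subsets of $\mathbb{S}^{N-1}$ of prescribed measure, spherical caps minimise the measure of every geodesic $\eta$-enlargement. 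Rescaling $\eta=r/\sqrt{N}$ and applying the spherical inequality to the cylinder $(B\times\R^{N-n})\cap\mathbb{S}^{N-1}(\sqrt{N})$, one then passes to the limit $N\to\infty$ to recover the claimed Gaussian bound with the sharp constant $\Phi(a+r)$.

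The main technical obstacle will be two-fold. First, the measurability issue---the set $A+r\mathcal{K}$ is in general not Borel, which is precisely why the statement is formulated with the inner measure $\mu_\star$---must be handled carefully at the level of the finite-dimensional exhaustion to ensure compatibility with the monotone limit. The second and more delicate subtlety is the spherical-to-Gaussian passage: I would need to verify that the optimisers on the sphere, namely spherical caps, correspond in the limit to Euclidean half-spaces in $\R^n$, and that the rescaled geodesic enlargements converge uniformly to Euclidean translations on sets of fixed mass, which requires careful tail control of the cylinder intersection with the sphere and is the technically most demanding step.
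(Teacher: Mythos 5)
The paper does not actually prove this statement: Theorem~\ref{BORELL} is quoted verbatim as a known result, and its ``proof'' is a one-line citation to \cite[Theorem 4.3]{Led96}. Your sketch is exactly the classical Borell--Sudakov--Tsirelson argument that the cited reference contains --- finite-dimensional reduction along the Cameron--Martin space, the Gaussian isoperimetric inequality on $\R^n$ with half-spaces as extremisers, and the passage from the L\'evy--Schmidt spherical isoperimetric inequality via Poincar\'e's limit --- so in substance you are reconstructing the proof the authors outsource rather than taking a different route. Two small points to tighten. First, $\bigcup_n(A+r\mathcal{K}_n)$ is \emph{not} equal to $A+r\mathcal{K}$: the union of the finite-dimensional unit balls $\mathcal{K}_n$ is only a dense subset of $\mathcal{K}$, not all of it. This is harmless for your purpose, since $A+r\mathcal{K}_n\subseteq A+r\mathcal{K}$ already gives $\mu_{\star}(A+r\mathcal{K})\geq\sup_n\mu(A+r\mathcal{K}_n)$, and the uniform-in-$n$ lower bound $\Phi(a+r)$ is what the finite-dimensional inequality delivers; but you should not assert the set equality. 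Second, no measurable-selection argument is needed for $A+r\mathcal{K}_n$ (or indeed $A+r\mathcal{K}$): it is the image of the Borel set $A\times r\mathcal{K}_n$ under the continuous addition map, hence analytic and universally measurable, which is precisely why the statement is phrased with the inner measure $\mu_{\star}$. With these repairs your outline matches the standard proof in \cite{Led96}; the genuinely technical steps (cylindrical approximation of $A$, the spherical cap versus half-space limit, and the tail control in Poincar\'e's limit) are correctly identified, though of course they would still have to be carried out in full to constitute a complete proof.
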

	\begin{proof}
		\cite[Theorem 4.3.]{Led96}
	\end{proof}
	A famous example of Gaussian process that satisfy Assumption~\eqref{Cameron-Martin} is given by fractional Brownian motion. For further examples we refer to \cite[Section 3.3]{BGS25}. 
	Furthermore, we point out that Assumption \ref{Cameron-Martin} enables us to translate any element of $\mathbf{X}$ in the direction of $\mathcal{H}$ canonically via Young integration. This concept of translation was first introduced in \cite{CLL13} to study integrable bounds for the solution of rough differential equations. Our main strategy in the infinite-dimensional setting is inspired by~\cite{CLL13}.
	\begin{definition}\label{TRAAS}
		Let Assumption \ref{Cameron-Martin} hold. For $h\in\mathcal{H}$, the translated path 
		\begin{align}\label{AUSjaksevc}
			T_{{h}}(\mathbf{X}): \Delta_{[0,T]} \longrightarrow T^{3}(\mathbb{R}^d)
		\end{align}
		is defined as follows. 
		\begin{itemize}
			\item [1)] Level one:
			\begin{align*}
				\left(\Pi^{1}(T_{{h}}(\mathbf{X}))\right)_{s,t}=\left(\Pi^{1}(\mathbf{X})\right)_{s,t}+\left(\Pi^{1}(\mathbf{h})\right)_{s,t}.
			\end{align*}
			\item [2)] Level two:
			\begin{align}\label{ITET2}
				\begin{split}
					&\left(\Pi^{2}(T_{{h}}(\mathbf{X}))\right)_{s,t}=\\ &\underbrace{\left(\Pi^{2}(\mathbf{X})\right)_{s,t}}_{\left(\Forest{[X[X]]}\right)_{s,t}}+\underbrace{\int_{s}^{t}\left(\Pi^{1}(\mathbf{X})\right)_{s,u}\otimes \mathrm{d}\left(\Pi^{1}(\mathbf{h})\right)_{u}}_{\left(\Forest{[h[X]]}\right)_{s,t}}+\underbrace{\int_{s}^{t}\left(\Pi^{1}(\mathbf{h})\right)_{s,u}\otimes \mathrm{d}\left(\Pi^{1}(\mathbf{X})\right)_{u}}_{\left(\Forest{[X[h]]}\right)_{s,t}}+\underbrace{\left(\Pi^{2}(\mathbf{h})\right)_{s,t}}_{\left(\Forest{[h[h]]}\right)_{s,t}}.
				\end{split}
			\end{align}
			\item [3)] Level three:
			\begin{align}\label{ITET3}
				\begin{split}
					&\left(\Pi^{3}(T_{{h}}(\mathbf{X}))\right)_{s,t}=\\&\left(\Forest{[X[X[X]]]}\right)_{s,t}+\left(\Forest{[X[X[h]]]}\right)_{s,t}+\left(\Forest{[X[h[X]]]}\right)_{s,t}+\left(\Forest{[h[X[X]]]}\right)_{s,t}+\left(\Forest{[X[h[h]]]}\right)_{s,t}+\left(\Forest{[h[X[h]]]}\right)_{s,t}+\left(\Forest{[h[h[X]]]}\right)_{s,t}+\left(\Forest{[h[h[h]]]}\right)_{s,t}.
				\end{split}
			\end{align}
			The mixture of $\mathbf{h}$ and $\mathbf{X}$ in the second item is defined using Young integration. In the last item, by the 3-ladder trees, we refer to the integration of the above 2-ladder trees against the root. For example
			\begin{align*}
				\left(\Forest{[X[X[h]]]}\right)_{s,t} := \int_{s}^{t} \left(\Forest{[X[h]]}\right)_{s,u} \otimes \mathrm{d}\left(\Pi^{1}(\mathbf{X})\right)_{u}, \quad \left(\Forest{[h[h[X]]]}\right)_{s,t} = \int_{s}^{t} \left(\Forest{[h[X]]}\right)_{s,u} \otimes \mathrm{d}\left(\Pi^{1}(\mathbf{h})\right)_{u}.
			\end{align*}
			{Except for the rough terms, i.e., those involving only \( \mathbf{X} \) and assumed to be given, the remaining terms are defined using Young integration and the classical Sewing lemma. We provide more details on the construction of these integrals below.
} 
		\end{itemize} 
	\end{definition}
   
    \begin{remark}
For $\gamma\in(\frac{1}{3},\frac{1}{2})$, the third level in Definition \ref{TRAAS} can be omitted.
    \end{remark}
    
	Now, we estimate the translated path in terms of our controlled functions. The results presented below generalize \cite[Lemma 3.1]{CLL13} and \cite[Lemma 2.8, Lemma 2.9]{GVR25} by incorporating the control functions given in Definition~\ref{pgamma:rp}.  
	\begin{lemma}\label{ABC1}
		The following estimate is valid for all \( 0 \leq p < \gamma<\gamma' \) and for every interval \( [s, t]\subseteq [0,T] \) 		\begin{align}\label{KM5a}
			W_{\Pi^{1}(\mathbf{X+h}),\gamma,p}(s,t)\lesssim W_{\Pi^{1}(\mathbf{X}),\gamma,p}(s,t)+W_{\Pi^{1}(\mathbf{h}),\gamma^\prime,p}^{\frac{\gamma^\prime -p}{\gamma-p}}(s,t).
		\end{align}
	\end{lemma}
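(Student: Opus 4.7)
The plan is to unwind the definition of the control function $W_{\Pi^1(\cdot),\gamma,p}$ from Definition~\ref{pgamma:rp} applied to the sum $\Pi^1(\mathbf{X}+\mathbf{h})=\Pi^1(\mathbf{X})+\Pi^1(\mathbf{h})$, split the two contributions by subadditivity of $x\mapsto x^{1/(\gamma-p)}$ up to a multiplicative constant, then re-express the $\mathbf{h}$-contribution in terms of the $\gamma'$-control of $\mathbf{h}$ using the embedding of $\ell^1$ into $\ell^q$ for $q>1$ (i.e.~Lemma~\ref{INM}).

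Fix an arbitrary partition $\pi=\{s=\tau_0<\tau_1<\dots<\tau_m=t\}\in P[s,t]$. Since $1/(\gamma-p)>1$, standard convexity gives
\[
|\Pi^1(\mathbf{X}+\mathbf{h})_{\tau_k,\tau_{k+1}}|^{\frac{1}{\gamma-p}}\lesssim |\Pi^1(\mathbf{X})_{\tau_k,\tau_{k+1}}|^{\frac{1}{\gamma-p}}+|\Pi^1(\mathbf{h})_{\tau_k,\tau_{k+1}}|^{\frac{1}{\gamma-p}}.
\]
Dividing by $(\tau_{k+1}-\tau_k)^{p/(\gamma-p)}$ and summing over $k$, the $\mathbf{X}$-part is directly bounded by $W_{\Pi^1(\mathbf{X}),\gamma,p}(s,t)$ by definition.

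The main (and only nontrivial) point is to handle the $\mathbf{h}$-part. Setting
\[
a_k:=\frac{|\Pi^1(\mathbf{h})_{\tau_k,\tau_{k+1}}|^{\frac{1}{\gamma'-p}}}{(\tau_{k+1}-\tau_k)^{\frac{p}{\gamma'-p}}},
\]
a direct algebraic manipulation (solving for $|\Pi^1(\mathbf{h})_{\tau_k,\tau_{k+1}}|$ and substituting back) shows that
\[
\frac{|\Pi^1(\mathbf{h})_{\tau_k,\tau_{k+1}}|^{\frac{1}{\gamma-p}}}{(\tau_{k+1}-\tau_k)^{\frac{p}{\gamma-p}}}=a_k^{\frac{\gamma'-p}{\gamma-p}}.
\]
Since $\gamma'>\gamma$ yields $q:=(\gamma'-p)/(\gamma-p)>1$, the embedding $\ell^1\hookrightarrow \ell^q$ from~\eqref{85sd} in Lemma~\ref{INM} gives $\sum_k a_k^q\leq\bigl(\sum_k a_k\bigr)^q$. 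Bounding $\sum_k a_k$ by $W_{\Pi^1(\mathbf{h}),\gamma',p}(s,t)$ and then taking the supremum over $\pi\in P[s,t]$ in the resulting estimate yields
\[
W_{\Pi^1(\mathbf{X}+\mathbf{h}),\gamma,p}(s,t)\lesssim W_{\Pi^1(\mathbf{X}),\gamma,p}(s,t)+W_{\Pi^1(\mathbf{h}),\gamma',p}^{\frac{\gamma'-p}{\gamma-p}}(s,t),
\]
as claimed. The main obstacle, though minor, is the scaling trick converting $|\Pi^1(\mathbf{h})|^{1/(\gamma-p)}/(\cdot)^{p/(\gamma-p)}$ into a power of its $\gamma'$-analogue: once this is observed, the embedding Lemma~\ref{INM} finishes the argument cleanly, and no cancellation between $\mathbf{X}$ and $\mathbf{h}$ is needed since both contributions are handled separately.
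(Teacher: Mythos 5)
Your proof is correct and follows essentially the same route as the paper: split the sum for $\mathbf{X}+\mathbf{h}$ (the paper does this via Minkowski in the weighted $\ell^{1/(\gamma-p)}$ norm, you via termwise power subadditivity, which only costs a harmless constant), then convert the $\mathbf{h}$-contribution from the $\gamma$-scale to the $\gamma'$-scale. Your scaling identity $\frac{|\Pi^1(\mathbf{h})_{\tau_k,\tau_{k+1}}|^{1/(\gamma-p)}}{(\tau_{k+1}-\tau_k)^{p/(\gamma-p)}}=a_k^{(\gamma'-p)/(\gamma-p)}$ combined with $\ell^1\hookrightarrow\ell^q$ from Lemma~\ref{INM} is exactly the ``standard argument'' the paper invokes without spelling out.
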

	\begin{proof}
		First, note that from the Minkowski inequality, for \( [s,t] \subseteq [0,T] \) and \( 0 \leq p < \gamma \), we have
		\begin{align}\label{52sdpp}
			W_{\Pi^{1}(\mathbf{X+h}),\gamma,p}^{\gamma-p}(s,t)\leq W_{\Pi^{1}(\mathbf{X}),\gamma,p}^{\gamma-p}(s,t)+W_{\Pi^{1}(\mathbf{h}),\gamma^\prime,p}^{\gamma-p}(s,t).
		\end{align}
		Since \( \gamma' > \gamma \), by a standard argument and using Lemma \ref{INM}, we conclude that
		\begin{align*}
			W_{\Pi^{1}(\mathbf{h}),\gamma^\prime,p}^{\gamma-p}(s,t)\leq W_{\Pi^{1}(\mathbf{h}),\gamma^\prime,p}^{\gamma^\prime-p}(s,t).
		\end{align*}
		This inequality together with \eqref{52sdpp} entails \eqref{KM5a}.
	\end{proof}
	Now, we obtain the same result for the iterated integrals.
	\begin{lemma}\label{AYASs}
		The following estimate is valid for all \( 0 \leq p < \gamma \) and every interval \( [s, t]\subseteq [0,T] \) 
		\begin{align*}
			W_{\Pi^{2}(\mathbf{X+h}),\gamma,p}(s,t)\lesssim W_{\Pi^{2}(\mathbf{X}),\gamma,p}(s,t)+W_{\Pi^{2}(\mathbf{h}),\gamma^\prime,p}^{\frac{\gamma^\prime -p}{\gamma-p}}(s,t).
		\end{align*}
	\end{lemma}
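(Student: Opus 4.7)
The plan is to generalize the strategy of Lemma \ref{ABC1}: decompose $\Pi^2(T_h(\mathbf{X}))_{s,t}$ additively via \eqref{ITET2} into the four pieces
\[
\Pi^2(\mathbf{X})_{s,t},\quad A_{s,t}:=\int_s^t\Pi^1(\mathbf{X})_{s,u}\otimes d\Pi^1(\mathbf{h})_u,\quad B_{s,t}:=\int_s^t\Pi^1(\mathbf{h})_{s,u}\otimes d\Pi^1(\mathbf{X})_u,\quad\Pi^2(\mathbf{h})_{s,t},
\]
and control each separately before summing. The elementary inequality $(\sum_{i=1}^4 a_i)^{1/(2(\gamma-p))}\leq C\sum_{i=1}^4 a_i^{1/(2(\gamma-p))}$, applied termwise to the partition sum defining $W_{\Pi^2(\mathbf{X+h}),\gamma,p}$ and followed by taking the supremum over partitions, gives
\[
W_{\Pi^2(\mathbf{X+h}),\gamma,p}(s,t)\lesssim W_{\Pi^2(\mathbf{X}),\gamma,p}(s,t)+W_{A,\gamma,p}(s,t)+W_{B,\gamma,p}(s,t)+W_{\Pi^2(\mathbf{h}),\gamma,p}(s,t),
\]
where $W_{A,\gamma,p}$ and $W_{B,\gamma,p}$ are defined by the same formula as \eqref{AASdfd} (with $j=2$) applied to $A$ and $B$.

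The pure $\mathbf{X}$ piece needs no further work. For the pure $\mathbf{h}$ piece I reproduce the argument of Lemma \ref{ABC1}: rewriting the summand of $W_{\Pi^2(\mathbf{h}),\gamma,p}$ as $a_k^{(\gamma'-p)/(\gamma-p)}$ with $a_k$ the summand of $W_{\Pi^2(\mathbf{h}),\gamma',p}$, Lemma \ref{INM} with $p_1=(\gamma'-p)/(\gamma-p)>1$ yields $W_{\Pi^2(\mathbf{h}),\gamma,p}(s,t)\leq W_{\Pi^2(\mathbf{h}),\gamma',p}^{(\gamma'-p)/(\gamma-p)}(s,t)$. For $A$ (and analogously $B$) I exploit the Young-integral germ $\Xi^A_{s,u,t}:=A_{s,t}-A_{s,u}-A_{u,t}=\Pi^1(\mathbf{X})_{s,u}\otimes\Pi^1(\mathbf{h})_{u,t}$, which satisfies, since $\gamma+\gamma'>1$ by Assumption \ref{Cameron-Martin}(3),
\[
|\Xi^A_{s,u,t}|\leq (u-s)^p(t-u)^p\,W_{\Pi^1(\mathbf{X}),\gamma,p}^{\gamma-p}(s,t)\,W_{\Pi^1(\mathbf{h}),\gamma',p}^{\gamma'-p}(s,t);
\]
a Young-type sewing argument in the control-function language—parallel in spirit to Step two of the proof of Theorem \ref{shdccsa} but considerably simpler, since only two levels appear—then produces
\[
W_{A,\gamma,p}(s,t)\lesssim W_{\Pi^1(\mathbf{X}),\gamma,p}^{1/2}(s,t)\,W_{\Pi^1(\mathbf{h}),\gamma',p}^{(\gamma'-p)/(2(\gamma-p))}(s,t).
\]

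To translate these level-one bounds into level-two ones I invoke the weak geometricity of $\mathbf{X}$ and $\mathbf{h}$ (Assumption \ref{Cameron-Martin}(1)): the symmetric part of $\Pi^2$ equals $\tfrac12\,\delta(\cdot)^{\otimes 2}$, so $|\delta X_{s,t}|^2\lesssim|\Pi^2(\mathbf{X})_{s,t}|$ and $|\delta h_{s,t}|^2\lesssim|\Pi^2(\mathbf{h})_{s,t}|$, whence $W_{\Pi^1(\mathbf{X}),\gamma,p}\lesssim W_{\Pi^2(\mathbf{X}),\gamma,p}$ and $W_{\Pi^1(\mathbf{h}),\gamma',p}\lesssim W_{\Pi^2(\mathbf{h}),\gamma',p}$. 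A final application of Young's arithmetic-geometric inequality $a^{1/2}b^{1/2}\leq\tfrac12(a+b)$, applied with $a=W_{\Pi^2(\mathbf{X}),\gamma,p}$ and $b=W_{\Pi^2(\mathbf{h}),\gamma',p}^{(\gamma'-p)/(\gamma-p)}$, absorbs the product in the estimate for $W_{A,\gamma,p}$ into $W_{\Pi^2(\mathbf{X}),\gamma,p}+W_{\Pi^2(\mathbf{h}),\gamma',p}^{(\gamma'-p)/(\gamma-p)}$, and identically for $B$. Combining with the pure-level-two bounds concludes the proof. The main obstacle is the control-function Young sewing for $A$ and $B$: one must keep the scaling factor $(u-s)^p(t-u)^p$ intact through the sewing so that the induced control $W_{A,\gamma,p}$ retains the correct homogeneity; after this is done, the algebraic clean-up via Lemma \ref{INM} and Young's inequality is routine.
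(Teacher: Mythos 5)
Your proposal is correct, and for the heart of the matter --- the two mixed iterated integrals $A$ and $B$ --- it is essentially the paper's argument in different clothing: the paper bounds these terms by the classical Young estimate \eqref{4as}, converts to the $p$-weighted controls via Lemma \ref{sudj14} (yielding \eqref{sdsa85d}, which is precisely your germ bound with the factor $(\tau_2-\tau_1)^{2p}$), recognizes the right-hand side as a control function raised to the power $\tfrac{\gamma+\gamma'-2p}{2(\gamma-p)}>1$, and concludes with Lemma \ref{INM} and the inequality $x^{\theta}y^{1-\theta}\le x+y$; your ``control-function Young sewing'' plus the homogeneity/Lemma \ref{INM} clean-up is the same computation organized around the sewing germ, and your intermediate bound $W_{A,\gamma,p}\lesssim W_{\Pi^{1}(\mathbf{X}),\gamma,p}^{1/2}\,W_{\Pi^{1}(\mathbf{h}),\gamma',p}^{(\gamma'-p)/(2(\gamma-p))}$ coincides with the paper's. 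The one genuinely different step is your finish: the paper stops with \emph{level-one} controls on the right-hand side (so, read literally, its proof delivers the estimate with $W_{\Pi^{1}(\mathbf{X}),\gamma,p}+W_{\Pi^{1}(\mathbf{h}),\gamma',p}^{(\gamma'-p)/(\gamma-p)}$, a harmless mismatch with the displayed statement since Proposition \ref{PRPOO} only uses the controls summed over all levels), whereas you invoke weak geometricity --- the symmetric part of $\Pi^{2}$ equals $\tfrac12(\delta\cdot)^{\otimes 2}$ --- to dominate $W_{\Pi^{1}(\mathbf{X}),\gamma,p}$ by $W_{\Pi^{2}(\mathbf{X}),\gamma,p}$ (and likewise for $\mathbf{h}$, whose Young lift is geometric) and then apply the arithmetic--geometric inequality. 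This buys the statement exactly as displayed, at the price of using geometricity, which is indeed available under Assumption \ref{Cameron-Martin}; the paper's level-one version avoids geometricity and is equally sufficient downstream. The only caution is that your sewing step for $A$ and $B$ is asserted rather than carried out: when writing it down, use the local bound $|\delta A_{u,m,v}|\le |X_{u,m}|\,|h_{m,v}|$ and observe that $(v-u)^{\frac{2p}{\gamma+\gamma'}}W_{\Pi^{1}(\mathbf{X}),\gamma,p}^{\frac{\gamma-p}{\gamma+\gamma'}}(u,v)\,W_{\Pi^{1}(\mathbf{h}),\gamma',p}^{\frac{\gamma'-p}{\gamma+\gamma'}}(u,v)$ is a control (the exponents sum to one), so the sewing and the subsequent superadditivity argument go through exactly as in the paper's passage from \eqref{sdsa85d} to the conclusion.
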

	\begin{proof}
		We assume that $[\tau_1, \tau_2] \subseteq [s, t]$. Note that, by the definition of the translated rough path, we must estimate the terms $\Forest{[X[h]]}$, $\Forest{[h[X]]}$, and $\Forest{[X[X]]}$ separately. We only treat the term $\Forest{[h[X]]}$, since the rest follow by similar computations.~The assumption $\gamma + \gamma^\prime > 1$ allows us to define the second iterated integral, which is a mixture of $\mathbf{h}$ and $\mathbf{X}$.  To be more precise, first note that from Lemma \ref{sudj14}, we have
		\begin{align}
			W_{\Pi^{2}(\mathbf{X}),\gamma,0}(s,t) < \infty.
		\end{align}
		Now, since $\gamma + \gamma^\prime > 1$, from classical Young integration, we obtain that
		\begin{align}\label{4as}
			\left|\left(\Forest{[h[X]]}\right)_{\tau_1,\tau_2}\right|\lesssim W_{\Pi^{1}(\mathbf{X}),\gamma,0}^{\gamma}(\tau_1,\tau_2) W_{\Pi^{1}(\mathbf{h}),\gamma^\prime,0}^{\gamma^\prime}(\tau_1,\tau_2).
		\end{align}
		Now from Lemma \ref{sudj14}, we conclude that
		\begin{align}\label{sdsa85d}
			\begin{split}
				\left|\left(\Forest{[h[X]]}\right)_{\tau_1,\tau_2}\right|&\lesssim (\tau_2-\tau_1)^{2p}W_{\Pi^{1}(\mathbf{X}),\gamma,p}^{\gamma-p}(\tau_1,\tau_2) W_{\Pi^{1}(\mathbf{h}),\gamma^\prime,p}^{\gamma^\prime-p}(\tau_1,\tau_2)\\&=(\tau_2-\tau_1)^{2p}\left(W_{\Pi^{1}(\mathbf{X}),\gamma,p}^{\frac{\gamma-p}{\gamma+\gamma^\prime-2p}}(\tau_1,\tau_2)W_{\Pi^{1}(\mathbf{h}),\gamma^\prime,p}^{\frac{\gamma^\prime-p}{\gamma+\gamma^\prime-2p}}(\tau_1,\tau_2)\right)^{\gamma+\gamma^\prime-2p}.
			\end{split}
		\end{align}
		This in turn yields 
		\begin{align*}
			\frac{\left|\left(\Forest{[h[X]]}\right)_{\tau_1,\tau_2}\right|^{\frac{1}{2(\gamma-p)}}}{(\tau_2-\tau_1)^{\frac{p}{\gamma-p}}}\lesssim \left(\underbrace{W_{\Pi^{1}(\mathbf{X}),\gamma,p}^{\frac{\gamma-p}{\gamma+\gamma^\prime-2p}}(\tau_1,\tau_2)W_{\Pi^{1}(\mathbf{h}),\gamma^\prime,p}^{\frac{\gamma^\prime-p}{\gamma+\gamma^\prime-2p}}(\tau_1,\tau_2)}_{\tilde{W}(\tau_1,\tau_2)}\right)^{\frac{\gamma+\gamma^\prime-2p}{2(\gamma-p)}}.
		\end{align*}
		Now, since \( \tilde{W} \) is a control function, \( \tau_1 \) and \( \tau_2 \) are chosen arbitrarily, and \( \gamma + \gamma' - 2p > 2(\gamma - p) \), it follows from Lemma \ref{INM} that
		\begin{align*}
			W_{\Forest{[h[X]]},\gamma,p}(s,t)\lesssim W_{\Pi^{1}(\mathbf{X}),\gamma,p}^{\frac{\gamma-p}{2(\gamma-p)}}(s,t)W_{\Pi^{1}(\mathbf{h}),\gamma^\prime,p}^{\frac{\gamma^\prime-p}{2(\gamma-p)}}(s,t)\leq W_{\Pi^{1}(\mathbf{X}),\gamma,p}(s,t)+W_{\Pi^{1}(\mathbf{h}),\gamma^\prime,p}^{\frac{\gamma^\prime-p}{\gamma-p}}(s,t).
		\end{align*}
		Note that here we also used the obvious inequality \( x^{\theta}y^{1-\theta} \leq x + y \), which holds for every \( \theta \in [0,1] \) and positive values of \( x \) and \( y \). This finishes the proof.
	\end{proof}
       \begin{remark}\label{lzds}
        The same inequality for the corresponding control holds if we replace \( \Forest{[x[h]]} \) in \eqref{4as}.
        \end{remark}
	\begin{remark}\label{adar852as}
		From \eqref{sdsa85d}, we conclude that for all $0\leq p< \gamma$ the following estimate holds 
		\begin{align*}
			\frac{\left|\left(\Forest{[h[X]]}\right)_{\tau_1,\tau_2}\right|^{\frac{1}{\gamma^\prime+\gamma-2p}}}{(\tau_2-\tau_1)^{\frac{2p}{\gamma^\prime+\gamma-2p}}}\lesssim W_{\Pi^{1}(\mathbf{X}),\gamma,p}^{\frac{\gamma-p}{\gamma+\gamma^\prime-2p}}(\tau_1,\tau_2)W_{\Pi^{1}(\mathbf{h}),\gamma^\prime,p}^{\frac{\gamma^\prime-p}{\gamma+\gamma^\prime-2p}}(\tau_1,\tau_2).
		\end{align*}
Then, using the same argument as in the proof of Lemma \ref{AYASs}, we get
\begin{align*}
			W_{\Forest{[h[X]]},\frac{\gamma+\gamma^\prime}{2},p}^{\gamma+\gamma^\prime-2p}(s,t)\lesssim W_{\Pi^{1}(\mathbf{X}),\gamma,p}^{\gamma-p}(s,t)W_{\Pi^{1}(\mathbf{h}),\gamma^\prime,p}^{\gamma^\prime-p}(s,t).
		\end{align*}
	\end{remark}
   
    \begin{remark}\label{5asora}
With minor modifications to the proof of Lemma \ref{sudj14}, we obtain the following inequalities for the control function for every \( [s,t] \subseteq [0,T] \)
 \begin{align*}
&W_{\Forest{[h[X]]},\frac{\gamma+\gamma^\prime}{2},0}^{\gamma^\prime+\gamma}(s,t)\leq (t-s)^{2p} W_{\Forest{[h[X]]},\frac{\gamma+\gamma^\prime}{2},p}^{\gamma^\prime+\gamma-2p}(s,t),\\
&W_{\Forest{[X[h]]},\frac{\gamma+\gamma^\prime}{2},0}^{\gamma^\prime+\gamma}(s,t)\leq (t-s)^{2p} W_{\Forest{[X[h]]},\frac{\gamma+\gamma^\prime}{2},p}^{\gamma^\prime+\gamma-2p}(s,t)  .
 \end{align*}
\end{remark}
	The method to estimate the third iterated integrals arising in~\eqref{ITET3} is similar to the previous deliberations. For the reader's convenience we outline the main ideas and first clarify how these iterated integrals can be defined. In \eqref{ITET3}, there are eight terms and each must be estimated separately. We focus on \( \Forest{[X[h[X]]]} \), as it requires more involved considerations. The approach for the remaining terms follows similar arguments.~For an arbitrary interval \( [a,b] \subseteq [0,T] \)
	\begin{align*}
		\left(\Forest{[X[h[X]]]}\right)_{a,b} := \lim_{\substack{\vert \pi \vert \to 0 \\ \pi \in P[a,b]}} \sum_{k} \Xi_{\tau_{k},\tau_{k+1}},
	\end{align*}
	where 
	\[
	\Xi_{\tau_{k},\tau_{k+1}} = (\Forest{[h[X]]})_{a,\tau_k} \otimes (\Pi^{1}(\mathbf{X}))_{\tau_k,\tau_{k+1}} 
	+ (\Pi^{1}(\mathbf{X}))_{a,\tau_k} \otimes (\Forest{[X[h]]})_{\tau_k,\tau_{k+1}},
	\]
	according to \cite[Theorem 4.8]{GVR23B}. By definition
	\begin{align*}
		\Xi_{\tau_{k},\tau_{k+2}}-\Xi_{\tau_{k},\tau_{k+1}} -\Xi_{\tau_{k+1},\tau_{k+2}}=- (\Forest{[h[X]]})_{\tau_k,\tau_{k+1}}\otimes (\Pi^{1}(\mathbf{X}))_{\tau_{k+1},\tau_{k+2}}-(\Pi^{1}(\mathbf{X}))_{\tau_{k},\tau_{k+1}}\otimes (\Forest{[X[h]]})_{\tau_{k+1},\tau_{k+2}}.
	\end{align*}
	Recalling \eqref{4as} and Remark \ref{lzds}, 
     one can conclude that \( \Forest{[X[h[X]]]} \) is well-defined since \( \gamma' + 2\gamma > 1 \) based on Lemma \ref{AYASs} and the classical Sewing lemma. Moreover,
	\begin{align}\label{AUsia}
		\left|\left(\Forest{[X[h[X]]]}\right)_{a,b}\right|\lesssim  W_{\Forest{[h[X]]},\frac{\gamma+\gamma^\prime}{2},0}^{\gamma^\prime+\gamma}(a,b)W_{\Pi^{1}(\mathbf{X}),\gamma,0}^{\gamma}(a,b)+W_{\Pi^{1}(\mathbf{X}),\gamma,0}^{\gamma}(a,b)W_{\Forest{[X[h]]},\frac{\gamma+\gamma^\prime}{2},0}^{\gamma^\prime+\gamma}(a,b).
	\end{align}
	Furthermore, we obtain:
	\begin{lemma}\label{ABC2}
		 For every $0 \leq p < \gamma$ and for every $[s, t]\subseteq [0, T]$, we have the following estimate 
		\begin{align}\label{ASjSAD}
			W_{\Pi^{3}(\mathbf{X+h}),\gamma,p}(s,t)\lesssim W_{\Pi^{3}(\mathbf{X}),\gamma,p}(s,t)+W_{\Pi^{3}(\mathbf{h}),\gamma^\prime,p}^{\frac{\gamma^\prime -p}{\gamma-p}}(s,t).
		\end{align}
	\end{lemma}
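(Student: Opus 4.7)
The plan is to imitate the proof of Lemma~\ref{AYASs}, applying the same philosophy term by term to each of the eight ladder trees appearing in~\eqref{ITET3}. By Minkowski's inequality applied to the control $W_{\Pi^{3}(\mathbf{X+h}),\gamma,p}$, it suffices to bound the control of each of the eight trees separately by the right-hand side of~\eqref{ASjSAD}. Two of the trees, namely $\Forest{[X[X[X]]]}$ and $\Forest{[h[h[h]]]}$, contribute directly the two terms on the right-hand side of~\eqref{ASjSAD} (the latter after passing from $(p,\gamma')$ to $(p,\gamma)$ as in the proof of Lemma~\ref{ABC1}, using $\gamma' > \gamma$). So the actual work is to control the six genuinely mixed trees.

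For the mixed trees I would first convert pointwise bounds into control bounds. The key pointwise estimate is~\eqref{AUsia} for $\Forest{[X[h[X]]]}$, and entirely analogous bounds for the remaining five mixed trees follow from the Sewing lemma, the fact that $\gamma + \gamma' > 1$ (so $\gamma' + 2\gamma > 1$ and $2\gamma' + \gamma > 1$), and the level-two estimates from Lemma~\ref{AYASs} together with Remark~\ref{adar852as} and Remark~\ref{5asora}. Each such bound takes the form
\begin{align*}
\left|\left(\textrm{tree}\right)_{a,b}\right| \lesssim W_{\Pi^{1}(\mathbf{X}),\gamma,0}^{i\gamma}(a,b)\, W_{\Pi^{1}(\mathbf{h}),\gamma',0}^{(3-i)\gamma'}(a,b),
\end{align*}
for some $i \in \{1,2\}$ depending on how many $X$-factors the tree contains. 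Applying Lemma~\ref{sudj14} to each $W_{\cdot,\cdot,0}$ factor introduces polynomial $(b-a)^{3p}$ factors and converts the bound into one involving $W_{\Pi^{1}(\mathbf{X}),\gamma,p}$ and $W_{\Pi^{1}(\mathbf{h}),\gamma',p}$ with the correct exponents, exactly mirroring~\eqref{sdsa85d}.

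Next I would write each resulting estimate as the $(3(\gamma-p))$-th power of a product of two controls, which by the argument in the proof of Lemma~\ref{AYASs} (combine into one $(3(\gamma-p))$-th power, identify it as a control function, and use Lemma~\ref{INM}) yields for an arbitrary subinterval $[s,t]$ that
\begin{align*}
W_{\textrm{tree},\gamma,p}(s,t) \lesssim W_{\Pi^{1}(\mathbf{X}),\gamma,p}^{\frac{i(\gamma-p)}{3(\gamma-p)}}(s,t)\, W_{\Pi^{1}(\mathbf{h}),\gamma',p}^{\frac{(3-i)(\gamma'-p)}{3(\gamma-p)}}(s,t).
\end{align*}
Finally, the elementary inequality $x^{\theta} y^{1-\theta} \leq x + y$ for $\theta \in [0,1]$ dominates each such product by $W_{\Pi^{1}(\mathbf{X}),\gamma,p}(s,t) + W_{\Pi^{1}(\mathbf{h}),\gamma',p}^{\frac{\gamma'-p}{\gamma-p}}(s,t)$, which is bounded by the right-hand side of~\eqref{ASjSAD} after noting $W_{\Pi^1} \leq W_{\Pi^3}$ only in a formal sense --- more precisely, these level-one controls are themselves dominated by their level-three analogues up to the convention that each level contributes to the total $W_{\mathbf{X},\gamma,p}$.

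The main obstacle I anticipate is purely bookkeeping: matching the correct exponents $i(\gamma-p)$ and $(3-i)(\gamma'-p)$ for each of the six mixed trees and verifying that the ratio $\frac{i(\gamma-p) + (3-i)(\gamma'-p)}{3(\gamma-p)} \geq 1$ holds so that Lemma~\ref{INM} is applicable, which is where the hypothesis $\gamma' > \gamma$ enters decisively. Once this accounting is done for each tree, the argument is completely parallel to the level-two case and no new analytic input is required beyond the Sewing lemma bounds~\eqref{AUsia} and their analogues.
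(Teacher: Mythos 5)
Your proposal is correct and follows essentially the same route as the paper's proof: reduce to the mixed ladder trees, start from the Sewing-lemma pointwise bound~\eqref{AUsia} together with Lemma~\ref{sudj14} and Remarks~\ref{adar852as} and~\ref{5asora} to produce a control raised to a power at least one, then apply Lemma~\ref{INM} over arbitrary partitions and finish with $x^{\theta}y^{1-\theta}\leq x+y$. The only difference is presentational — the paper works out a single representative tree and states that the remaining terms are analogous, whereas you outline the uniform bookkeeping (the exponent check $2\gamma+\gamma'-3p>3(\gamma-p)$, i.e.\ $\gamma'>\gamma$) for all six mixed trees — so no new idea is missing.
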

	\begin{proof}
		We focus on \( \Forest{[X[h[X]]]} \). We assume that \( [\tau_1, \tau_2] \subseteq [s, t] \). Then from \ref{AUsia}, Lemma \ref{sudj14} and Remark \ref{5asora}
		\begin{align*}
			&\left|\left(\Forest{[X[h[X]]]}\right)_{\tau_1, \tau_2}\right|\lesssim  W_{\Forest{[h[X]]},\frac{\gamma+\gamma^\prime}{2},0}^{\gamma^\prime+\gamma}(\tau_1, \tau_2)W_{\Pi^{1}(\mathbf{X}),\gamma,0}^{\gamma}(\tau_1, \tau_2)+W_{\Pi^{1}(\mathbf{X}),\gamma,0}^{\gamma}(\tau_1, \tau_2)W_{\Forest{[X[h]]},\frac{\gamma+\gamma^\prime}{2},0}^{\gamma^\prime+\gamma}(\tau_1, \tau_2)\\&\myquad[2]\lesssim (\tau_2-\tau_1)^{3p}\left( W_{\Forest{[h[X]]},\frac{\gamma+\gamma^\prime}{2},p}^{\gamma^\prime+\gamma-2p}(\tau_1, \tau_2)W_{\Pi^{1}(\mathbf{X}),\gamma,p}^{\gamma-p}(\tau_1, \tau_2)+W_{\Pi^{1}(\mathbf{X}),\gamma,p}^{\gamma-p}(\tau_1, \tau_2)W_{\Forest{[X[h]]},\frac{\gamma+\gamma^\prime}{2},p}^{\gamma^\prime+\gamma-2p}(\tau_1, \tau_2)\right).
		\end{align*}
		This, together with Remark \ref{adar852as}, implies that
		\begin{align*}
			\frac{\left|\left(\Forest{[X[h[X]]]}\right)_{\tau_1,\tau_2}\right|^{\frac{1}{3(\gamma-p)}}}{(\tau_2-\tau_1)^{\frac{p}{\gamma-p}}}\lesssim\left( W_{\Pi^{1}(\mathbf{X}),\gamma,p}^{\frac{2(\gamma-p)}{2\gamma+\gamma^\prime-3p}}(\tau_1,\tau_2)W_{\Pi^{1}(\mathbf{h}),\gamma^\prime,p}^{\frac{\gamma^\prime-p}{2\gamma+\gamma^\prime-3p}}(\tau_1,\tau_2)\right)^{\frac{2\gamma+\gamma^\prime-3p}{3(\gamma-p)}}.
		\end{align*}
		The rest of the proof is almost the same as the proof of Lemma \ref{AYASs}. Since \( 2\gamma + \gamma' - 3p > 3(\gamma - p) \) and \( \tau_1, \tau_2 \) are arbitrary, it follows from Lemma \ref{INM} that
		\begin{align*}
			W_{\Forest{[X[h[X]]]},\gamma,p}(s,t)&\lesssim \left( W_{\Pi^{1}(\mathbf{X}),\gamma,p}^{\frac{2(\gamma-p)}{2\gamma+\gamma^\prime-3p}}(s,t)W_{\Pi^{1}(\mathbf{h}),\gamma^\prime,p}^{\frac{\gamma^\prime-p}{2\gamma+\gamma^\prime-3p}}(s,t)\right)^{\frac{2\gamma+\gamma^\prime-3p}{3(\gamma-p)}}=W_{\Pi^{1}(\mathbf{X}),\gamma,p}^{\frac{2}{3}}(s,t)W_{\Pi^{1}(\mathbf{h}),\gamma^\prime,p}^{\frac{\gamma^\prime-p}{3(\gamma-p)}}(s,t)\\&\leq W_{\Pi^{1}(\mathbf{X}),\gamma,p}(s,t)+W_{\Pi^{1}(\mathbf{h}),\gamma^\prime,p}^{\frac{\gamma^\prime-p}{\gamma-p}}(s,t).
		\end{align*}
		As we already mentioned, the other terms in \eqref{ITET3} can be treated similarly. Thus, our claim is proven.
	\end{proof}
	The following statement summarizes the previous auxiliary results.
	\begin{proposition}\label{PRPOO}
		On a set \( \tilde{\mathcal{W}} \) of full measure in \( \mathcal{W} \), the following statements hold true:
		\begin{enumerate}
			\item For every $\omega\in \tilde{\mathcal{W}} $ , and for every \( h \in \mathcal{H} \), we have
			\begin{align}\label{sdsd84}
				T_{{h}}(\mathbf{X})(\omega)\equiv \mathbf{X}(\omega+h).
			\end{align}
			In particular, this implies that
			\begin{align}\label{sdsd85}
				T_{{h}}(\mathbf{X})(\omega-h)\equiv \mathbf{X}(\omega).
			\end{align}
			\item For every subinterval \( [s, t] \) of \( [0, T] \) and \( 0 \leq p < \gamma \), there exists a constant \( C=C(p,\gamma,\gamma^\prime) \) such that the following estimate holds 
			\begin{align}\label{sdsd86}
				W_{T_{{h}}(\mathbf{X}),\gamma,p}^{\gamma-p}(s,t)\leq C\left( W_{\mathbf{X},\gamma,p}^{\gamma-p}(s,t)+W_{\mathbf{h},\gamma^\prime,p}^{\gamma^\prime-p}(s,t)\right).
			\end{align}
		\end{enumerate}
	\end{proposition}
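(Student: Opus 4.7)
The plan splits naturally into two independent parts: an algebraic/measure-theoretic argument for the translation identity \eqref{sdsd84}--\eqref{sdsd85}, and a direct termwise application of Lemmas \ref{ABC1}, \ref{AYASs} and \ref{ABC2} for the control-function bound \eqref{sdsd86}.

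For the first assertion, at level one the identity is immediate, since $\Pi^{1}(T_h \mathbf{X})(\omega)=X(\omega)+h=\Pi^{1}(\mathbf{X})(\omega+h)$. At higher levels, if $X$ were a smooth sample, expanding the iterated Riemann--Stieltjes integrals against $X+h$ produces precisely the four and eight ladder-tree terms appearing in \eqref{ITET2} and \eqref{ITET3}, so the identity holds pointwise. For a general Gaussian sample I would choose a canonical smooth (say piecewise linear or mollified) approximation $X^n(\omega)$ whose enhancement $\mathbf{X}^n(\omega)$ converges to $\mathbf{X}(\omega)$ in the $(p,\gamma)$-rough path topology on a common set $\tilde{\mathcal{W}}$ of full measure; this set is chosen once and for all, independently of $h$. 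Since the Young and Sewing integrals defining the mixed ladder trees are jointly continuous in their arguments (thanks to $\gamma+\gamma'>1$ and $2\gamma+\gamma'>1$), one passes to the limit in both sides of $T_h(\mathbf{X}^n)(\omega)=\mathbf{X}^n(\omega+h)$ to obtain \eqref{sdsd84}. Substituting $\omega-h$ for $\omega$ then yields \eqref{sdsd85}.

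For the second assertion, by Definition \ref{pgamma:rp} and the subadditivity of $x\mapsto x^{\gamma-p}$ (cf.\ Lemma \ref{INM}),
\begin{align*}
W_{T_h(\mathbf{X}),\gamma,p}^{\gamma-p}(s,t) \lesssim \sum_{1\leq j\leq N}W_{\Pi^{j}(T_h(\mathbf{X})),\gamma,p}^{\gamma-p}(s,t).
\end{align*}
Lemmas \ref{ABC1}, \ref{AYASs} and \ref{ABC2} control each summand by
\begin{align*}
W_{\Pi^{j}(T_h(\mathbf{X})),\gamma,p}^{\gamma-p}(s,t)\lesssim W_{\Pi^{j}(\mathbf{X}),\gamma,p}^{\gamma-p}(s,t)+W_{\Pi^{j}(\mathbf{h}),\gamma',p}^{\gamma'-p}(s,t),
\end{align*}
with an implicit constant depending only on $p$, $\gamma$ and $\gamma'$. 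Summing over $j$ and recalling $W_{\mathbf{X},\gamma,p}=\sum_{j}W_{\Pi^{j}(\mathbf{X}),\gamma,p}$ produces \eqref{sdsd86}.

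The main obstacle is the first part, specifically the selection of a \emph{single} set $\tilde{\mathcal{W}}$ of full measure along which the smooth-path translation identity passes to the limit \emph{uniformly in} $h\in\mathcal{H}$. This is handled in the Friz--Victoir spirit by taking the approximating sequence $X^n$ to be a deterministic (dyadic piecewise linear) functional of the sample, and combining the continuity of the canonical lift on $\tilde{\mathcal{W}}$ with the uniform Cameron--Martin bound \eqref{HASH825}, which makes the $h$-dependence of the translated iterated integrals quantitative on bounded subsets of $\mathcal{H}$.
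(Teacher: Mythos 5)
Your second assertion is proved exactly as in the paper: the paper also applies Lemma \ref{ABC1}, Lemma \ref{AYASs} and Lemma \ref{ABC2} levelwise and then sums, using inequality \eqref{785a} of Lemma \ref{INM} (your subadditivity of $x\mapsto x^{\gamma-p}$) to pass between the control of the full path and the controls of its levels; nothing to add there. For the first assertion you and the paper diverge only in presentation: the paper does not reprove it but simply invokes \cite[Lemma 5.4]{CLL13}, whereas you sketch the approximation argument that underlies that citation (smooth/dyadic piecewise-linear lifts, exact validity of the translation identity for smooth samples, passage to the limit via continuity of the Young/sewing constructions under $\gamma+\gamma'>1$ and $2\gamma+\gamma'>1$). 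Your sketch is the standard route, but be aware that the genuinely delicate point is not uniformity in $h$ on bounded sets of $\mathcal{H}$ (so \eqref{HASH825} is not really what is needed here), but rather that a \emph{single} null set works for \emph{all} $h\in\mathcal{H}$ simultaneously, and that the right-hand side $\mathbf{X}(\omega+h)$ is even defined: this is obtained deterministically, by observing that the dyadic approximations of $\omega+h$ split into those of $\omega$ plus those of $h$, so convergence of the lift at $\omega$ together with Young estimates for the mixed ladder trees forces convergence of the lift at $\omega+h$ and identifies the limit with $T_h(\mathbf{X})(\omega)$ — i.e.\ the set where the canonical lift exists is itself $\mathcal{H}$-translation invariant. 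That is precisely the content of \cite[Lemma 5.4]{CLL13}; if you spell this step out, your argument is a complete substitute for the citation, and \eqref{sdsd85} then follows by replacing $\omega$ with $\omega-h$ exactly as you say.
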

	\begin{proof}
		The first assertion was established in \cite[Lemma 5.4]{CLL13}.
        {Keeping in mind the inequality \eqref{785a}, the second statement is simply obtained from Lemma \ref{ABC1}, Lemma \ref{AYASs}, and Lemma \ref{ABC2}.}
	\end{proof}
	\begin{remark}
		As before, we use \( \mathcal{W} \) to denote the set of full measure \( \tilde{\mathcal{W}} \), which satisfies the conditions of the previous proposition.
	\end{remark}
	Proposition \ref{PRPOO} provides an estimate for the translated path. Additionally, recall that in Theorem \ref{PRIORI}, we proved a priori bound (roughly speaking) in terms of the number of greedy points for any interval whose length is small enough but independent of \( \mathbf{X} \). The main idea is to investigate the tail of this random variable by using Proposition \ref{PRPOO} and Borell’s inequality, similar to the finite-dimensional case treated in \cite{CLL13}. 
	We first state an auxiliary result.
	\begin{lemma}\label{suppoert}
		Assume that $\chi>0$ and $[s,t]\subseteq [0,T]$. Then 
		\begin{align*}
			\mu\left\lbrace \omega : \frac{\chi}{2}\leq W_{\mathbf{X}(\omega),\gamma,p}(s,t)\leq \chi\right\rbrace>0.
		\end{align*}
	\end{lemma}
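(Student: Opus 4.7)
The plan is to combine three ingredients: (i) almost sure continuity of the rough path lift together with continuity of the $(p,\gamma)$-variation control as a functional on rough paths, (ii) the scaling symmetry of iterated integrals, and (iii) the full topological support of the Gaussian measure $\mu$ on the separable Banach space $\mathcal{W}=C([0,T],\mathbb{R}^d)$. Set $F(\omega) := W_{\mathbf{X}(\omega),\gamma,p}(s,t)$. The aim is to exhibit a non-empty open subset $U \subset \mathcal{W}$ on which $F \in (\chi/2,\chi)$; full support then immediately yields $\mu(U) > 0$, which proves the claim.

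The first step is to show that $F$ is continuous on a full $\mu$-measure subset $\mathcal{W}_0 \subset \mathcal{W}$, when $\mathcal{W}$ is equipped with its sup-norm topology. This is a standard fact in the theory of Gaussian rough paths: the canonical lift $\omega \mapsto \mathbf{X}(\omega)$ is a.s. continuous into the rough path space equipped with the $(p,\gamma)$-variation metric, and the control functional $\mathbf{Y} \mapsto W_{\mathbf{Y},\gamma,p}(s,t)$ is continuous with respect to that metric. The second step is to exhibit values of $F$ on both sides of the target interval. For small values, note that $F(0)=0$ since $\mathbf{X}(0)=0$, so by continuity there is a sup-neighborhood of $0$ on which $F<\chi/2$. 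For large values, choose any $\omega_0 \in \mathcal{W}_0$ with $F(\omega_0)>0$; such $\omega_0$ exists because the Gaussian process is non-degenerate. Homogeneity of iterated integrals shows that the level-$j$ component of $\mathbf{X}(\lambda\omega_0)$ scales like $\lambda^j$, so $F(\lambda\omega_0) \geq c\lambda$ for some $c>0$ and all $\lambda\geq 1$, which exceeds $\chi$ for $\lambda$ large enough.

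Applying the intermediate value theorem to the continuous map $\lambda \mapsto F(\lambda\omega_0)$ produces $\lambda^* > 0$ with $F(\lambda^*\omega_0) \in (\chi/2,\chi)$. By continuity of $F$ on $\mathcal{W}_0$, the set $U := F^{-1}((\chi/2,\chi)) \cap \mathcal{W}_0$ is relatively open and non-empty. Since $\mu$ is a non-degenerate Gaussian measure on the separable Banach space $\mathcal{W}$, it has full topological support, so every non-empty open set has positive $\mu$-measure; intersecting with $\mathcal{W}_0$ preserves positivity, hence $\mu(\{F \in [\chi/2,\chi]\}) \geq \mu(U) > 0$.

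The main obstacle is rigorously justifying the a.s. continuity of $F$, since the $(p,\gamma)$-variation control functional is a priori only lower semicontinuous in the rough path metric. If this direct continuity argument turns out to be too strong, the natural workaround replaces the intermediate value step by a Cameron–Martin translation argument: use Proposition \ref{PRPOO} together with the Cameron–Martin bound $W_{\mathbf{h},\gamma',p}(s,t) \lesssim |h|_{\mathcal{H}}^{1/(\gamma'-p)}$ from Assumption \ref{Cameron-Martin}(4) to choose $h \in \mathcal{H}$ whose translation maps a level set $\{F \leq \chi/4\}$ (which has positive measure by Borell's inequality, Theorem \ref{BORELL}) into the annulus $\{F \in [\chi/2,\chi]\}$, and then invoke Cameron–Martin quasi-invariance to conclude that the translated set still has positive $\mu$-measure.
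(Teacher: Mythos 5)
Your main route has a genuine gap at its first step. The paper disposes of this lemma by invoking the support theorem for \emph{Gaussian rough paths} (\cite[Section 15.8]{FV10}), and that is not a cosmetic choice: full topological support of $\mu$ on $\mathcal{W}=C([0,T],\mathbb{R}^d)$ in the sup-norm only concerns the first level $X$, while $F(\omega)=W_{\mathbf{X}(\omega),\gamma,p}(s,t)$ depends on the higher iterated integrals. For $\gamma<1/2$ (and a fortiori $\gamma<1/3$) the lift $\omega\mapsto\mathbf{X}(\omega)$ is only defined almost surely as a probabilistic limit and is \emph{not} a continuous function of the path in the uniform topology (the classical L\'evy-area counterexamples: one can perturb a path by an amount small in sup-norm while changing the second level by an $O(1)$ amount). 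Hence your claim that $F$ is a.s.\ sup-norm continuous, and therefore that $U=F^{-1}((\chi/2,\chi))\cap\mathcal{W}_0$ is relatively open so that full support of $\mu$ on $\mathcal{W}$ applies, does not hold; this is exactly the obstruction that the support theorem for the \emph{lifted} process (whose proof itself goes through Cameron--Martin translations) is designed to overcome. The scaling step has a related defect: $\mathbf{X}(\lambda\omega_0)$ is not defined pathwise for a fixed $\omega_0$, so the intermediate value argument in $\lambda$ cannot be run on a single trajectory without further justification (one can dilate the rough path $\delta_\lambda\mathbf{X}(\omega_0)$, which scales the control by $\lambda^{1/(\gamma-p)}$, but positive probability of being near the dilated object again requires support information at the level of rough paths, not of $\mathcal{W}$).

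Your fallback (Cameron--Martin translation plus Borell) is much closer in spirit to how the support theorem is actually proved, but as sketched it is also incomplete: Proposition \ref{PRPOO} and \eqref{HASH825} give only the \emph{upper} bound $W_{T_h(\mathbf{X})(\omega),\gamma,p}^{\gamma-p}\leq C\bigl(W_{\mathbf{X}(\omega),\gamma,p}^{\gamma-p}+W_{\mathbf{h},\gamma',p}^{\gamma'-p}\bigr)$, which lets you keep the translated control below $\chi$, but landing in the annulus also requires a \emph{lower} bound keeping $W_{\mathbf{X}(\omega+h),\gamma,p}(s,t)\geq\chi/2$, and no such reverse estimate is available from the stated results (translating back by $-h$ only bounds $W_{\mathbf{X}(\omega)}$ by $W_{T_h(\mathbf{X})(\omega)}+W_{\mathbf{h}}$, which does not prevent the translated control from being small). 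So either quote the support theorem for Gaussian rough paths as the paper does, or carry out the translation argument with both one-sided estimates made explicit; as written, neither your primary argument nor the fallback closes the proof.
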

	\begin{proof}
		The proof follows from the support theorem for the Gaussian rough paths. {For more details, we refer to \cite[Section 15.8]{FV10}.}
	\end{proof}
	 The following result is a  generalized version of \cite[Proposition 2.16]{GVR25}. Compared to that result, we obtain here that \( p \) can be taken arbitrarily close to \( \gamma \).~Additionally, this proposition allows us to include the case \( \gamma\in(\frac{1}{4},\frac{1}{3}) \).
	\begin{theorem}\label{INTEFGA}
		Let Assumption \ref{Cameron-Martin} hold, \( \chi > 0 \) be given, and \( [s, t] \subseteq [0, T] \). Then, there exist two deterministic constants \( M_{1}(\chi, p) \) and \( M_{2}(\chi, p) \) such that for every \( n \in \mathbb{N} \)
		\begin{align*}
			\mu\left\lbrace\omega : \tilde{N}([s,t], \chi, \mathbf{X}(\omega))>n\right\rbrace\leq M_{1}(\chi,p)\exp\left(-M_{2}(\chi,p)n^{2(\gamma^\prime-p)}\right).
		\end{align*}
	\end{theorem}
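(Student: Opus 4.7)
The plan is to adapt the Cass--Litterer--Lyons strategy~\cite{CLL13} to our setting of $(p,\gamma)$-rough paths quantified via control functions. The idea is to find an appropriate ``small-control'' set of positive Gaussian measure, enlarge it by Cameron--Martin translates, and then combine the translation estimate \eqref{sdsd86} with Borell's inequality (Theorem~\ref{BORELL}) to obtain the desired tail bound.

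First, by Lemma~\ref{suppoert} applied with a suitable threshold, the set
\[
B := \{\omega \in \mathcal{W} : W_{\mathbf{X}(\omega),\gamma,p}(s,t) \leq c_0\}
\]
has strictly positive measure for any fixed $c_0 > 0$, hence $\mu(B) = \Phi(a)$ for some $a \in \mathbb{R}$. By Borell's inequality, $\mu_\star(B + r\mathcal{K}) \geq \Phi(a + r)$ for every $r \geq 0$. The strategy will be to show that if $\omega \in B + r\mathcal{K}$, then $\tilde{N}([s,t],\chi,\mathbf{X}(\omega))$ is bounded by a quantity proportional to $r^{1/(\gamma'-p)}$, so that $\{\tilde{N} > n\}$ is forced into the complement $(B + r_n\mathcal{K})^c$ for an appropriate choice $r_n \sim n^{\gamma'-p}$.

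The key step is the following. For $\omega = \omega_0 + h$ with $\omega_0 \in B$ and $|h|_{\mathcal{H}} \leq r$, \eqref{sdsd84} yields $\mathbf{X}(\omega) = T_h\mathbf{X}(\omega_0)$, and by the greedy-point definition combined with \eqref{sdsd86}, for every $1 \leq m < \tilde{N} := \tilde{N}([s,t],\chi,\mathbf{X}(\omega))$ we have
\[
\chi \;=\; W_{\mathbf{X}(\omega),\gamma,p}^{\gamma-p}(\tau_{m-1},\tau_m) \;\leq\; C\Big[\,W_{\mathbf{X}(\omega_0),\gamma,p}^{\gamma-p}(\tau_{m-1},\tau_m) + W_{\mathbf{h},\gamma',p}^{\gamma'-p}(\tau_{m-1},\tau_m)\Big].
\]
We now split the greedy indices into two classes according to which term on the right is at least $\chi/(2C)$: let $I_1$ consist of those $m$ where the $\mathbf{X}(\omega_0)$-term dominates, and $I_2$ those where the $\mathbf{h}$-term dominates, so that $\tilde{N}-1 \leq |I_1| + |I_2|$. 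Using the superadditivity of the control $W_{\mathbf{X}(\omega_0),\gamma,p}$, the cardinality $|I_1|$ is bounded by $c_0 \cdot (\chi/(2C))^{-1/(\gamma-p)}$, which is a constant $K_1 = K_1(\chi,c_0)$ independent of both $n$ and $r$. Analogously, superadditivity of $W_{\mathbf{h},\gamma',p}$ together with~\eqref{HASH825} gives
\[
|I_2| \cdot (\chi/(2C))^{1/(\gamma'-p)} \leq W_{\mathbf{h},\gamma',p}(s,t) \leq W_{\mathbf{h},\gamma',p}(0,T) \lesssim |h|_\mathcal{H}^{1/(\gamma'-p)} \leq r^{1/(\gamma'-p)},
\]
so $|I_2| \leq K_2(\chi) r^{1/(\gamma'-p)}$. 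Combining, $\tilde{N}([s,t],\chi,\mathbf{X}(\omega)) \leq 1 + K_1 + K_2 r^{1/(\gamma'-p)}$ for all $\omega \in B + r\mathcal{K}$.

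Finally, for $n$ large enough, we set $r_n := c_\chi n^{\gamma'-p}$, with $c_\chi$ chosen so that $1 + K_1 + K_2 r_n^{1/(\gamma'-p)} < n$. Consequently $\{\tilde{N}([s,t],\chi,\mathbf{X}) > n\} \subseteq (B + r_n\mathcal{K})^c$, and Borell's inequality together with the standard Gaussian tail estimate yield
\[
\mu\{\tilde{N}([s,t],\chi,\mathbf{X}) > n\} \leq 1 - \Phi(a + r_n) \lesssim \exp\!\left(-\tfrac{1}{2}(a+r_n)^2\right) \lesssim \exp\!\left(-M_2(\chi,p)\,n^{2(\gamma'-p)}\right).
\]
For the finitely many small values of $n$ the bound is achieved by enlarging $M_1(\chi,p)$ so as to absorb the trivial estimate $\mu(\cdot) \leq 1$ into the exponential prefactor. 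The main technical obstacle is the splitting step: a direct application of \eqref{sdsd86} without separating regimes would only produce the weaker exponent $2(\gamma-p)$; it is precisely by partitioning the greedy intervals according to which of the two terms in \eqref{sdsd86} saturates the threshold, and then exploiting the strictly higher Cameron--Martin regularity $\gamma' > \gamma$ via~\eqref{HASH825}, that we upgrade the exponent to the sharp value $2(\gamma'-p)$.
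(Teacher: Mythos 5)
Your proof is correct, and at the top level it follows the same Cass--Litterer--Lyons strategy as the paper: translate via \eqref{sdsd84}--\eqref{sdsd86}, control the Cameron--Martin contribution by \eqref{HASH825}, locate a positive-measure set via Lemma~\ref{suppoert}, enlarge it by $r_n\mathcal{K}$ with $r_n\sim n^{\gamma'-p}$, and conclude with Borell's inequality (Theorem~\ref{BORELL}) and the Gaussian tail. The implementation of the key counting step, however, is genuinely different. You keep the deterministic threshold $\chi$ and split the greedy indices into the class where the $\mathbf{X}(\omega_0)$-term of \eqref{sdsd86} saturates $\chi/(2C)$ and the class where the $\mathbf{h}$-term does, bounding the first by superadditivity of $W_{\mathbf{X}(\omega_0),\gamma,p}$ on the set $\{W_{\mathbf{X},\gamma,p}(s,t)\le c_0\}$ and the second by superadditivity of $W_{\mathbf{h},\gamma',p}$ together with \eqref{HASH825}. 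The paper instead chooses the $\omega$-dependent threshold $\tilde{\chi}=2C\,W_{\mathbf{X}(\omega-h),\gamma,p}^{\gamma-p}(s,t)$, so that on every greedy interval the $\mathbf{h}$-term must carry at least $\tilde{\chi}/(2C)$, arriving at \eqref{SAId}; it then needs the two-sided band $\{\chi/2\le 2C W^{\gamma-p}\le\chi\}$ from Lemma~\ref{suppoert} plus monotonicity of $\tilde{N}$ in the threshold to return to the fixed $\chi$ in \eqref{ajd85}. Your version avoids the random threshold and the monotonicity step and only needs a positive-measure upper-level set, which is arguably a cleaner bookkeeping; the paper's version avoids the index splitting. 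One remark: your closing claim that a direct use of \eqref{sdsd86} "without separating regimes" would only yield the exponent $2(\gamma-p)$ overstates the necessity of the splitting --- the paper obtains the same sharp exponent $2(\gamma'-p)$ without any splitting, precisely through the adapted choice of $\tilde{\chi}$; so the splitting is one of (at least) two equally effective ways to exploit $\gamma'>\gamma$, not the only one.
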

	\begin{proof}
		Throughout the proof, we assume that we are working on a set of full measure \( \tilde{\mathcal{W}} \), as specified in Definition \ref{PRPOO}. For \( \tilde{\chi} > 0 \), which will be determined below, we consider the greedy points \( \lbrace \tau_{m,\mathbf{X}(\omega)}^{[s,t]}(\tilde{\chi}) \rbrace_{0 \leq m \leq \tilde{N}([s,t], \tilde{\chi}, \mathbf{X}(\omega))} \), as introduced in Definition \ref{s}. Recalling that \( W_{\mathbf{X}(\omega),\gamma,p}\) is continuous, from \eqref{sdsd85} and \eqref{sdsd86} and for $1\leq m< \tilde{N}([s,t], \tilde{\chi}, \mathbf{X}(\omega))$, we have 
		\begin{align}\label{UJNMLsd}
			\begin{split}
				\tilde{\chi}&=W_{\mathbf{X}(\omega),\gamma,p}^{\gamma-p}\left(\tau_{m-1,\mathbf{X}(\omega)}^{[s,t]}(\tilde{\chi}),\tau_{m,\mathbf{X}(\omega)}^{[s,t]}(\tilde{\chi})\right)=W_{T_{{h}}\left(\mathbf{X}(\omega-h)\right),\gamma,p}^{\gamma-p}\left(\tau_{m-1,\mathbf{X}(\omega)}^{[s,t]}(\tilde{\chi}),\tau_{m,\mathbf{X}(\omega)}^{[s,t]}(\tilde{\chi})\right)\\&\leq C\left[ W_{\mathbf{X}(\omega-h),\gamma,p}^{\gamma-p}\left(\tau_{m-1,\mathbf{X}(\omega)}^{[s,t]}(\tilde{\chi}),\tau_{m,\mathbf{X}(\omega)}^{[s,t]}(\tilde{\chi})\right)+W_{\mathbf{h},\gamma^\prime,p}^{\gamma^\prime-p}\left(\tau_{m-1,\mathbf{X}(\omega)}^{[s,t]}(\tilde{\chi}),\tau_{m,\mathbf{X}(\omega)}^{[s,t]}(\tilde{\chi})\right)\right].
			\end{split}
		\end{align}
		We set 
		\begin{align*}
			\tilde{\chi}:=2C W_{\mathbf{X}(\omega-h),\gamma,p}^{\gamma-p}(s,t).
		\end{align*}
		Then, from the obvious inequality
		\[
		W_{\mathbf{X}(\omega-h),\gamma,p}^{\gamma-p}\left( \tau_{m-1, \mathbf{X}(\omega)}^{[s,t]}(\tilde{\chi}), \tau_{m, \mathbf{X}(\omega)}^{[s,t]}(\tilde{\chi}) \right) \leq \frac{\tilde{\chi}}{2C}
		\]
		and \eqref{UJNMLsd}, we conclude that
		\begin{align*}
			W_{\mathbf{X}(\omega),\gamma,p}^{\gamma-p}(s,t)\leq W_{\mathbf{h},\gamma^\prime,p}^{\gamma^\prime-p}\left(\tau_{m-1,\mathbf{X}(\omega)}^{[s,t]}(\tilde{\chi}),\tau_{m,\mathbf{X}(\omega)}^{[s,t]}(\tilde{\chi})\right).
		\end{align*}
		Moreover, we have $W_{\mathbf{X}(\omega),\gamma,p}^{\frac{\gamma-p}{\gamma^\prime-p}}(s,t) \leq W_{\mathbf{h},\gamma^\prime,p}\left( \tau_{m-1,\mathbf{X}(\omega)}^{[s,t]}(\tilde{\chi}), \tau_{m,\mathbf{X}(\omega)}^{[s,t]}(\tilde{\chi}) \right).$
		Thus, by summing over \( m \) with {$1 \leq m < \tilde{N}([s,t], \tilde{\chi}, \mathbf{X}(\omega))$} 
         and using the fact that \( W_{\mathbf{h},\gamma^\prime,p} \) is a control function, together with \eqref{HASH825}, we obtain the following result for a constant \( \tilde{C} \), by substituting the value of \( \tilde{\chi} \)
		\begin{align}\label{SAId}
			\left[\tilde{N}\left([s,t],2C W_{\mathbf{X}(\omega-h),\gamma,p}^{\gamma-p}(s,t),\mathbf{X}(\omega)\right)-1\right] W_{\mathbf{X}(\omega-h),\gamma,p}^{\frac{\gamma-p}{\gamma^\prime-p}}(s,t)\leq W_{\mathbf{h},\gamma^\prime,p}(s,t)\leq \tilde{C}\vert h\vert_{\mathcal{H}}^{\frac{1}{\gamma^\prime -p}}.
		\end{align}
		Now {recall} that $\chi>0$ is given and that $\mathcal{K}$ denotes the unit ball in $\mathcal{H}$. 
        Then 
		\begin{align}\label{ajd85}
			\bigg{\lbrace}\omega : \tilde{N}([s,t], \chi, \mathbf{X}(\omega))>n\bigg{\rbrace}\bigsubseteq \mathcal{W}\bigsetminus\left\lbrace\underbrace{\left\lbrace \omega:\frac{\chi}{2} \leq 2C W_{\mathbf{X}(\omega),\gamma,p}^{\gamma-p}(s,t)\leq \chi\right\rbrace}_{A} +r_{n}\mathcal{K}\right\rbrace,
		\end{align}
		where 
		\begin{align}\label{UJMAs}
			r_{n}:=(\frac{n-1}{\tilde{C}})^{\tilde{\gamma}-p}(\frac{\chi}{4C}).
		\end{align}
		Indeed, to prove \eqref{ajd85}, first note that if \( \chi_1 \leq \chi_2 \), then $\tilde{N}([s,t], \chi_2, \mathbf{X}(\omega)) \leq \tilde{N}([s,t], \chi_1, \mathbf{X}(\omega)).$ 
		{Then, by using \eqref{SAId}, we can verify that \eqref{ajd85} holds.} From Lemma \ref{suppoert}, we know that $\mu(A)>0$. Thus, we can apply Borell’s inequality. For \( \mu(A) = \Phi(a) \) we conclude according to Theorem \ref{BORELL} that
		\begin{align*}
			\mu\left\lbrace\omega : \tilde{N}([s,t], \chi, \mathbf{X}(\omega))>n\right\rbrace&\leq 1-\Phi(a+r_n)=\frac{1}{\sqrt{2\pi}}\int_{a+r_n}^{\infty} \exp\left( -\frac{x^2}{2} \right) \, \mathrm{d}x \\& \lesssim \exp\left(\frac{-(a+r_{n})^2}{4}\right).
		\end{align*}
		Plugging in the value of \( r_n \) established in \eqref{UJMAs} proves the statement. 
	\end{proof}
	We impose an assumption which can easily be verified for a large family of Gaussian processes, see~\cite[Section 3.3]{BGS25} for examples. In particular, this enables us to work with paths of regularity $\gamma\in(\frac{1}{4},\frac{1}{3})$.
	\begin{assumption}\label{YAHSs}
		We assume that
		\begin{align}
			2\left(\gamma^\prime - \frac{\sigma + 3\gamma}{4}\right) > 1.
		\end{align}
		Consequently, in Assumption 
        \ref{ADssdfg} and for \( N = 3 \)\ we can choose \( \frac{\sigma + 3\gamma}{4} < p < \gamma \)  such that
		\begin{align}\label{HNNABS}
			2\left(\gamma^\prime - p\right) > 1.
		\end{align}
	\end{assumption}

\begin{remark}
   \begin{itemize}
       \item  Note that $p$ can be taken arbitrarily close to \( \frac{\sigma + 3\gamma}{4} \).
       \item Considering $\gamma\in(\frac{1}{3},\frac{1}{2})$ in Assumption \ref{YAHSs}, we can take $N=2$, as naturally expected.
   \end{itemize}

\end{remark}

Now, we can state the main result of this section.
	\begin{theorem}\label{thm:ibound}
		Let all the assumptions of Theorem \ref{PRIORI} hold. In addition, assume that \( \mathbf{X} \) is a Gaussian process satisfying Assumption \ref{Cameron-Martin} . Then for every \( \frac{\sigma + 3\gamma}{4} < p < \gamma \) and \( \rho > 0 \), we can find positive deterministic constants \( M_{1,p} \) and \( M_{2,p} \), which also depend on \( \rho \), \( F \), and \( G \), such that the following tail estimate holds for \( n \in \mathbb{N} \)
		\begin{align}\label{UJJM854}
			\mu\left\lbrace \omega: \sup_{\vert y\vert_{E_\alpha}\leq \rho}\sup_{\tau\in [0,T]}\left\vert \phi_{\mathbf{X}(\omega)}(0,\tau,y)\right\vert_{E_\alpha}> n\ \right\rbrace\leq M_{1,p}\exp(-M_{2,p}n^{2(\gamma^\prime -p)}).
		\end{align}
		{In particular, under Assumption \ref{YAHSs}, we obtain that
}
		\begin{align}\label{UJJM855}
			\sup_{\vert y\vert_{E_\alpha}\leq \rho}\sup_{\tau\in [0,T]}\left\vert \phi_{\mathbf{X}(\omega)}(0,\tau,y)\right\vert_{E_\alpha}\in \bigcap_{q\geq 1}L^{q}(\mathcal{W}).
		\end{align}
	\end{theorem}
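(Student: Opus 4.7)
The plan is to reduce the tail estimate for the solution's norm to the tail estimate for the number of greedy points in Theorem \ref{INTEFGA}, by iterating the deterministic a priori bound of Theorem \ref{PRIORI} along a suitable partition of $[0,T]$ via the flow property \eqref{FLOWW}.

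I would first fix the deterministic constants $\chi, L_2$ supplied by Theorem \ref{PRIORI} and partition $[0,T]$ using the greedy points $\{\tau^{[0,T]}_{m,\mathbf{X}}(\chi)\}$ from Definition \ref{GRRED}, then insert at most $\lceil T/L_2\rceil$ additional deterministic cut points so that the resulting partition $0 = t_0 < t_1 < \cdots < t_K = T$ satisfies $t_{i+1}-t_i \leq L_2$ on each step and every subinterval $[t_i, t_{i+1}]$ contains at most one greedy jump. This gives $K \leq \tilde{N}([0,T], \chi, \mathbf{X}) + \lceil T/L_2 \rceil$ and $\tilde{N}([t_i, t_{i+1}], \chi, \mathbf{X}) \leq 1$ for every $i$.

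Applying Theorem \ref{PRIORI} on each $[t_i, t_{i+1}]$, with the initial datum $\phi_{\mathbf{X}}(0, t_i, y)$ inherited from the preceding step via \eqref{FLOWW}, the factors $P_1, P_2$ in \eqref{PRRTTioi}--\eqref{PRRTTioiio} are bounded by a deterministic constant (independent of $\mathbf{X}$) because the local greedy count is at most one. The resulting affine recursion
\[\sup_{\tau \in [t_i, t_{i+1}]} |\phi_{\mathbf{X}}(0, \tau, y)|_{E_\alpha} \leq C\, |\phi_{\mathbf{X}}(0, t_i, y)|_{E_\alpha} + C\]
iterates, uniformly for $|y|_{E_\alpha} \leq \rho$, to
\[\sup_{|y|_{E_\alpha} \leq \rho}\sup_{\tau \in [0,T]} |\phi_{\mathbf{X}}(0, \tau, y)|_{E_\alpha} \leq C_0 (\rho + 1) \exp\bigl(C_1 \tilde{N}([0,T], \chi, \mathbf{X})\bigr),\]
with deterministic constants $C_0, C_1$ depending on $L, T, L_2, F, G, \rho$. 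The conclusion then follows by inverting this estimate and invoking Theorem \ref{INTEFGA}: for $n$ sufficiently large relative to $\rho$, the event $\{\sup_{|y|\leq\rho}\sup_\tau |\phi_{\mathbf{X}(\omega)}(0,\tau,y)|_{E_\alpha} > n\}$ is contained in $\{\tilde{N}([0,T], \chi, \mathbf{X}) > c \log n\}$, whose probability is at most $M_{1} \exp\bigl(-M_{2} (c\log n)^{2(\gamma'-p)}\bigr)$. Under Assumption \ref{YAHSs} the exponent $2(\gamma'-p) > 1$, so this tail decays faster than any polynomial in $n$, which yields \eqref{UJJM855}.

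The main technical obstacle is arranging the partition so that the local number of greedy points on each subinterval is at most one. This is handled by using the greedy partition as the backbone and inserting the deterministic length-controlling points only inside subintervals that are free of greedy jumps, absorbing the resulting extra factor $\lceil T/L_2\rceil$ additively into $K$. A secondary point is the uniformity in the initial datum $|y|_{E_\alpha} \leq \rho$, which is automatic from the affine structure of the bound \eqref{PRRTTioi} provided by Theorem \ref{PRIORI}.
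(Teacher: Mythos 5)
Your overall strategy is the same as the paper's: localize via Theorem \ref{PRIORI} on intervals of length at most $L_2$, concatenate through the flow property, and convert the resulting dependence on the greedy count into a tail bound via Theorem \ref{INTEFGA}. The only structural difference is the concatenation scheme. The paper uses a fixed deterministic partition $0=t_0<\dots<t_{N_1}=T$ with $t_m-t_{m-1}\leq L_2$, multiplies the factors $P_1$ from \eqref{PRRTTioi} along this partition, and then takes a union bound over the $N_1$ events $\{\tilde N([t_{m-1},t_m],\chi,\mathbf{X})>M_2 n\}$, applying Theorem \ref{INTEFGA} to each subinterval. You instead refine the partition by the global greedy points of Definition \ref{GRRED}, so that each piece carries local greedy count one and hence deterministic factors, and the number of pieces is controlled by $\tilde N([0,T],\chi,\mathbf{X})+\lceil T/L_2\rceil$; this lets you apply Theorem \ref{INTEFGA} once, to the global count. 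Both constructions are sound (your claims that each refined piece has length $\leq L_2$ and control $\leq\chi$ follow from superadditivity of $W_{\mathbf{X},\gamma,p}$), and they produce the same type of bound $\sup_\tau|\phi|\lesssim(\rho+1)\exp(C\,\tilde N([0,T],\chi,\mathbf{X}))$.

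The substantive divergence is in the final inversion. From the exponential dependence in \eqref{PRRTTioiio} you deduce, correctly, only the containment $\{\sup>n\}\subseteq\{\tilde N([0,T],\chi,\mathbf{X})>c\log n\}$, which yields a tail of the form $\exp\bigl(-M(\log n)^{2(\gamma'-p)}\bigr)$; this is weaker than the bound stated in \eqref{UJJM854}, which has $n^{2(\gamma'-p)}$ in the exponent. The paper's own proof gets \eqref{UJJM854} by asserting the inclusion $\{\sup>n\}\subseteq\{\tilde N>M_1 n\}$, but since $P_1,P_2$ grow exponentially (not polynomially) in $\tilde N$ for the linear, unbounded $G$ considered here, only the logarithmic containment is actually justified by \eqref{PRRTTioi}--\eqref{PRRTTioiio}; so your version is the defensible one, and it is also consistent with the role of Assumption \ref{YAHSs}: the requirement $2(\gamma'-p)>1$ is exactly what makes $\exp\bigl(-M(\log n)^{2(\gamma'-p)}\bigr)$ decay faster than any polynomial, giving \eqref{UJJM855}, whereas a genuine $n^{2(\gamma'-p)}$ tail would give all moments for any positive exponent and would make that assumption superfluous. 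In short: your argument proves \eqref{UJJM855} and a $(\log n)$-version of \eqref{UJJM854}; to obtain \eqref{UJJM854} literally one would need a bound on the solution that is polynomial in $\tilde N$, which the present a priori estimate does not provide.
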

	\begin{proof}
		The proof is based on Theorem \ref{PRIORI} and Theorem \ref{INTEFGA}. Throughout the proof, we use the setting and notations of Theorem \ref{PRIORI}. First, we assume 
		that \( 0 < t_1 =\min\lbrace L_2 ,T\rbrace\).
		Now from Theorem \ref{PRIORI}, in particular \eqref{PRRTTioi} and \eqref{PRRTTioiio}, it follows for a constant $M_1$ that
		\begin{align*}
			\left\lbrace \omega: \sup_{\vert y\vert_{E_\alpha}\leq \rho}\sup_{\tau\in [0,t_0]}\left\vert \phi_{\mathbf{X}(\omega)}(0,\tau,y)\right\vert_{E_\alpha}> n\ \right\rbrace\bigsubseteq  \left\lbrace \omega: \tilde{N}([0,t_0], \chi, \mathbf{X}(\omega))> M_1n\ \right\rbrace.
		\end{align*}
		Now, we apply Theorem \ref{INTEFGA} to prove \eqref{UJJM854} on the interval \([0, t_0]\).~Furthermore, we extend this result to the entire time interval $[0,T]$ by a standard concatenation argument. First, we assume that \( L_2 < T \), otherwise the proof is complete. Then we assume that \( N_1 = \left\lfloor \frac{T}{L_2} \right\rfloor + 1 \) and define a sequence \( (t_m)_{0 \leq m \leq N_1} \) such that \( t_m - t_{m-1} = L_2 \) for \( 1 \leq m < N_1\), and \( t_{N_1} - t_{N_1-1} \leq L_2 \). Moreover, \( t_0 = 0 \) and \( t_{N_1} = T \). Then, applying \eqref{PRRTTioi}, we get the following inequality for every \( 1 \leq m \leq N_1 \)
		\begin{align}\label{54osdp}
			\begin{split}
				&\sup_{\tau\in [t_{m-1},t_m]}\left\vert \phi_{\mathbf{X}(\omega)}(0,\tau,y)\right\vert_{E_\alpha}\\&\quad\lesssim \left\vert \phi_{\mathbf{X}(\omega)}(0,t_{m-1},y)\right\vert_{E_\alpha}P_1\left(\mathbf{X}(\omega),[t_{m-1},t_m],\chi\right)+ P_{2}\left(\mathbf{X}(\omega),[t_{m-1},t_m],\chi\right).
			\end{split}
		\end{align}
		By setting \( t_{-1} = t_0 \) , we conclude from \eqref{54osdp} that
		\begin{align}\label{54osdp0}
			\begin{split}
				&\sup_{\tau\in [t_{m-1},t_m]}\left\vert \phi_{\mathbf{X}(\omega)}(0,\tau,y)\right\vert_{E_\alpha}\\&\quad\lesssim \sup_{\tau\in [t_{m-2},t_{m-1}]}\left\vert \phi_{\mathbf{X}(\omega)}(0,\tau,y)\right\vert_{E_\alpha}P_1\left(\mathbf{X}(\omega),[t_{m-1},t_m],\chi\right)+ P_{2}\left(\mathbf{X}(\omega),[t_{m-1},t_m],\chi\right).
			\end{split}
		\end{align}
		Now, since \( P_1, P_2 > 1 \),   we can recursively infer from \eqref{54osdp0} that
		\begin{align}\label{YN541a}
			\begin{split}
				&\sup_{\tau\in [0,T]}\left\vert \phi_{\mathbf{X}(\omega)}(0,\tau,y)\right\vert_{E_\alpha}=\max_{1\leq m\leq N_1}\sup_{\tau\in [t_{m-1},t_m]}\left\vert \phi_{\mathbf{X}(\omega)}(0,\tau,y)\right\vert_{E_\alpha}\\&\quad\lesssim \vert y\vert_{E_\alpha}\prod_{1\leq m\leq N_1}P_{1}(\mathbf{X}(\omega),[t_{m-1},t_m],\chi)+P_{2}(\mathbf{X}(\omega),[t_{N_1-1},t_{N_1}],\chi)\\&\qquad+\sum_{1\leq m<N_1}P_{2}(\mathbf{X}(\omega),[t_{m-1},t_m],\chi)\left(\prod_{m< k\leq N_1}P_{1}(\mathbf{X}(\omega),[t_{k-1},t_k],\chi)\right).
			\end{split}
		\end{align}
		Recall that \eqref{PRRTTioiio} holds on every interval $[s,t]$ with $t-s \leq L_2$. Thus, by substituting the bounds from \eqref{PRRTTioiio}, we can find another constant \( M_2 \), (which linearly depends on \( \frac{1}{N_1} \)), such that
		\begin{align}\label{YHAs52}
			\left\lbrace \omega: \sup_{\vert y\vert_{E_\alpha}\leq \rho}\sup_{\tau\in [0,T]}\left\vert \phi_{\mathbf{X}(\omega)}(0,\tau,y)\right\vert_{E_\alpha}> n\ \right\rbrace\bigsubseteq\bigcup_{1\leq m\leq N_1}\left\lbrace \omega: \tilde{N}([t_{m-1},t_m], \chi, \mathbf{X}(\omega))> M_2n\right\rbrace.
		\end{align}
		Now, we can apply Theorem \ref{INTEFGA} to each term on the right-hand side of \eqref{YHAs52}  to obtain \eqref{UJJM854} on the interval \([0, T]\). This proves the statement.
	\end{proof}

	\bibliographystyle{alpha}
	\bibliography{LINEAR_V2}

\def\cprime{$'$} \def\cprime{$'$}
\begin{thebibliography}{BNGVS25}

\bibitem[BNGVS25]{BGS25}
Alexandra Blessing~Neam\c{t}u, Mazyar Ghani~Varzaneh, and Tim Seitz.
\newblock A mild rough {G}ronwall lemma with applications to non-autonomous
  evolution equations.
\newblock {\em arXiv:2503.03628}, 2025.

\bibitem[BNS24]{BNS24}
Alexandra Blessing~Neam\c{t}u and Tim Seitz.
\newblock Existence and regularity of random attractors for stochastic
  evolution equations driven by rough noise.
\newblock {\em J. Dyn. Diff. Equ.}, pages 1--29, 2024.

\bibitem[CLL13]{CLL13}
Thomas Cass, Christian Litterer, and Terry~J. Lyons.
\newblock Integrability and tail estimates for {G}aussian rough differential
  equations.
\newblock {\em Ann. Probab.}, 41(4):3026--3050, 2013.

\bibitem[FNS20]{Friz}
Peter~K. Friz, Torstein Nilssen, and Wilhelm Stannat.
\newblock Existence, uniqueness and stability of semi-linear rough partial
  differential equations.
\newblock {\em J. Differ. Equat.}, 268:1686--1721, 2020.

\bibitem[FV10]{FV10}
Peter~K. Friz and Nicolas~B. Victoir.
\newblock {\em Multidimensional stochastic processes as rough paths}, volume
  120 of {\em Cambridge Studies in Advanced Mathematics}.
\newblock Cambridge University Press, Cambridge, 2010.
\newblock Theory and applications.

\bibitem[GH19]{GH19}
Andris Gerasimovičs and Martin Hairer.
\newblock Hörmander’s theorem for semilinear {SPDE}s.
\newblock {\em Electron. J. Probab.}, 24(32):1--56, 2019.

\bibitem[GHN21]{GHT21}
Andris Gerasimovičs, Antoine Hocquet, and Torstein Nilssen.
\newblock Non-autonomous rough semilinear {PDE}s and the multiplicative sewing
  lemma.
\newblock {\em J. Funct. Anal.}, 281(10), 2021.

\bibitem[GVR23]{GVR23B}
Mazyar Ghani~Varzaneh and Sebastian Riedel.
\newblock Introduction to rough paths theory.
\newblock {\em Mat. Contemp.}, 58:68--149, 2023.

\bibitem[GVR25]{GVR25}
Mazyar Ghani~Varzaneh and Sebastian Riedel.
\newblock An integrable bound for rough stochastic partial differential
  equations with applications to invariant manifolds and stability.
\newblock {\em Journal of Functional Analysis}, 288(1):110676, 2025.

\bibitem[HN22]{HN22}
Robert Hesse and Alexandra Neam\c{t}u.
\newblock Global solutions for semilinear rough partial differential equations.
\newblock {\em Stochastics and Dynamics}, 22(02), 2022.

\bibitem[Led96]{Led96}
Michel Ledoux.
\newblock Isoperimetry and {G}aussian analysis.
\newblock In {\em Lectures on probability theory and statistics
  ({S}aint-{F}lour, 1994)}, volume 1648 of {\em Lecture Notes in Math.}, pages
  165--294. Springer, Berlin, 1996.

\bibitem[LY25]{Li}
Xue-Mei Li and Kexing Ying.
\newblock Strong completeness of {SDE}s and non-explosion for {RDE}s with
  coefficients having unbounded derivatives.
\newblock {\em arXiv:2502.08799}, 2025.

\end{thebibliography}
\end{document}